\providecommand{\U}[1]{\protect\rule{.1in}{.1in}}
\newtheorem{theorem}{Theorem}[section]
\newtheorem{corollary}[theorem]{Corollary}
\newtheorem{definition}[theorem]{Definition}
\newtheorem{example}[theorem]{Example}
\newtheorem{lemma}[theorem]{Lemma}
\newtheorem{proposition}[theorem]{Proposition}
\newtheorem{remark}[theorem]{Remark}
\newenvironment{proof}[1][Proof]{\noindent \textbf{#1.} }{\  $\Box$}
\numberwithin{equation}{section}
\begin{document}

\title{Exit times for semimartingales under nonlinear expectation}
\author{Guomin Liu \thanks{School of Mathematical Sciences, Fudan University,
gmliu@fudan.edu.cn.} }
\maketitle

\textbf{Abstract}. Let $\mathbb{\hat{E}}$ be the upper expectation of a weakly
compact but possibly non-dominated family $\mathcal{P}$ of probability
measures. Assume that $Y$ is a $d$-dimensional $\mathcal{P}$-semimartingale
under $\mathbb{\hat{E}}$. Given an open set $Q\subset\mathbb{R}^{d}$, the exit
time of $Y$ from $Q$ is defined by
\[
{\tau}_{Q}:=\inf\{t\geq0:Y_{t}\in Q^{c}\}.
\]
The main objective of this paper is to study the quasi-continuity properties
of ${\tau}_{Q}$ under the nonlinear expectation $\mathbb{\hat{E}}$. Under some
additional assumptions on the growth and regularity of $Y$, we prove that
${\tau}_{Q}\wedge t$ is quasi-continuous if $Q$ satisfies the exterior ball
condition. We also give the characterization of quasi-continuous processes and
related properties on stopped processes. In particular, we obtain the
quasi-continuity of exit times for multi-dimensional $G$-martingales, which
nontrivially generalizes the previous one-dimensional result of Song
\cite{Song}.

{\textbf{Key words:} Nonlinear expectation,} $G$-expectation,
Multi-dimensional nonlinear semimartingales, Exit times, Quasi-continuity.

\textbf{AMS 2010 subject classifications:} 	60G40, 60G44, 60G48, 60H10 \addcontentsline{toc}{section}{\hspace*{1.8em}Abstract}

\section{Introduction}

On the space $\Omega$ of continuous paths, equipped with the topology of uniform convergence on compact sets and filtration generated by the canonical process, let $\mathcal{P}$ be a weakly
compact but possibly non-dominated family of probability measures. We define
the corresponding upper expectation and upper capacity by
\[
\mathbb{\hat{E}}[\xi]:=\sup_{P\in\mathcal{P}}E_{P}[\xi],\ c(A):=\sup
_{P\in\mathcal{P}}P(A),\ \ \ \text{for random variable}%
\ \xi\ \text{and measurable set}\ A.
\]
Assume that the  process $Y$ is a $d$-dimensional  (nonlinear) $\mathcal{P}%
$-semimartingale, i.e., $Y$ is a semimartingale under each $P\in\mathcal{{P}}$. A typical case of such kind of nonlinear expectation and nonlinear
 semimartingales is the notion of $G$-expectation and $G$-martingales proposed by Peng \cite{peng2005,Peng 1}. Given an open set $Q\subset\mathbb{R}^{d}$, we define the exit time of $Y$
from $Q$ by
\[
{\tau}_{Q}(\omega):=\inf\{t\geq0:Y_{t}(\omega)\in Q^{c}\}, \ \ \ \text{for}%
\ \omega\in\Omega.
\]
The aim of this paper is to study the quasi-continuity problem of exit times
${\tau}_{Q}$ under the nonlinear expectation $\mathbb{\hat{E}}$.

We say that a random variable is quasi-continuous, if it is continuous outside
an open set with any given small capacity, see Denis, Hu and Peng
\cite{D-H-P}. As is well-known, according to Lusin's theorem, all the
Borel measurable random variables defined on a Polish space are
quasi-continuous under the linear probability. This is the case that $\mathcal{P}$ is reduced to a single
measure. But it is no longer obvious for the general case since the elements
in the family $\mathcal{P}$ can be infinite, mutually singular and
non-dominated. Roughly speaking, the quasi-continuous random variables are
those that can be regarded as the limit of elements in $C_{b}(\Omega)$ in some proper sense, where $C_{b}(\Omega)$ is the set of bounded
continuous functions on $\Omega.$ Many important properties in the nonlinear
expectation theory, for example, monotone convergence theorem for decreasing
sequence (monotone convergence theorem for increasing sequence is trivial
since $\mathbb{\hat{E}}$ is an upper expectation)  and
(forward and backward) stochastic differential equations driven by
$G$-Brownian motion, only hold for random variables with such kind of regularity, see  \cite{D-H-P},   Gao \cite{Gao}, Hu, Ji, Peng and Song \cite{HJPS} and
Hu, Lin and Hima \cite{HLH}.

So one of the most important problems in the nonlinear expectation theory is
to verify that whether a random variable is quasi-continuous, especially for
stopping times in the form of ${\tau}_{Q}$ since such kind of problems keep
occurring when we stop a process as we often do in the classical analysis. The
first breakthrough on this direction was due to Song \cite[2011]%
{Song} (see also Song \cite[2014]{Song2}) who solved the quasi-continuity
problem of exit times when $Y$ is a one-dimensional $G$-martingale and
$Q=(-\infty,a)$. But the method of Song relies on a very important observation
that $Y_{{\tau}_{Q}\wedge t}\geq Y_{{\tau}_{\overline{Q}}\wedge t}$, which
holds only when $d=1$ and $Q=(-\infty,a)$, and hence cannot be applied to the
more general situation. So it remains a fascinating and challenging open
problem to establish the quasi-continuity of exit times for general dimension
$d$ and domain $Q$.

The main purpose of this paper is to provide a general theory on the
quasi-continuity properties of exit times ${\tau}_{Q}$, which allows us to
maintain the regularity of random variables or processes when we employ the
 localization techniques. Under some additional assumptions on the growth
and regularity for the process $Y$, we prove that ${\tau}_{Q}\wedge t$ is
quasi-continuous if $Q$ satisfies the exterior ball condition (see Section 3
for the definition). Furthermore, we show that ${\tau}_{Q}$ itself is
quasi-continuous if $Q$ is also bounded.

Our approach consists two key ingredients. One is to prove that ${\tau}%
_{Q}={\tau}_{\overline{Q}}$ q.s.$\ $(we say that a property holds
\textquotedblleft quasi-surely\textquotedblright\ (q.s.) if it holds $P$-a.s.
for each $P\in\mathcal{P}$) in Proposition \ref{Myth3.7}, where
\[
{\tau}_{\overline{Q}}(\omega):=\inf\{t\geq0:Y_{t}(\omega)\in\overline{Q}^{c}\}, \ \ \ \text{for}%
\ \omega\in\Omega.
\]
This is done by extending the auxiliary function argument in Lions and
Menaldi \cite{LM} to the case that the quadratic variation of $Y$ has possibly
unbounded rate of change and utilizing the tool of regular conditional
probability distributions of Stroock and Varadhan \cite{SV}. The other key
ingredient is to investigate the semi-continuities of ${\tau}_{Q}$ and ${\tau
}_{\overline{Q}}$ when the process $Y$ is continuous in $(\omega,t)$ and apply
a downward monotone convergence theorem for sets, see Lemma \ref{Myle3.1} and
Proposition \ref{Myth3.18}. First from the semi-continuities of exit times,
take in to account the regularity assumption on $Y$, we deduce that ${\tau
}_{Q}\wedge t$ is q.s. continuous on nearly all the domain $\Omega$. Then the
semi-continuities of ${\tau}_{Q}$ and ${\tau}_{\overline{Q}}$ allow us to use
the downward convergence theorem for upper capacity (see Song \cite{Song})
 to obtain an open set, on the complement of which ${\tau}_{Q}\wedge t$ is
continuous in $\omega$.

The rest of the paper is devoted to the study of the regularity for processes
needed for the above quasi-continuity of exit times. We give a
characterization theorem on the regularity of processes, thus generalized the
one for random variables in \cite{D-H-P}. We also investigate the
quasi-continuity of stopped processes when the stopping rule is a
quasi-continuous stopping time. Via the characterization theorem, we obtain
some typical examples of multi-dimensional nonlinear semimartingales
satisfying our assumptions, including $G$-martingales,
solutions of stochastic differential equations driven by $G$-Brownian motion and the canonical processes under a family of so-called semimartingale measures. We present at the end of the paper several
counterexamples to show that the exit times may not be quasi-continuous when our
assumptions are violated.

The paper is organized as follows. In Section 2, we recall the probabilistic
framework of nonlinear expectation and nonlinear semimartingales. The main
results on quasi-continuity of exit times for nonlinear semimartingales are
stated in Section 3. Section 4 is devoted to the research on the regularity of
 processes. Finally, in Section 5, we give some examples and counterexamples.

\section{Nonlinear expectation on the path space}

We present the basic setting and notations of the nonlinear expectation and nonlinear semimartingales. More relevant results can be found in \cite{Peng2007,peng2008,Peng 1,STZ}.

Let $\Omega:=C([0,\infty);\mathbb{R}^{k})$ be the space of all $\mathbb{R}%
^{k}$-valued continuous paths $(\omega_{t})_{t\geq0}$, equipped with the
distance
\[
\rho(\omega^{1},\omega^{2}):=\sum_{N=1}^{\infty}2^{-N}[(\sup_{t\in\lbrack
0,N]}|\omega_{t}^{1}-\omega_{t}^{2}|)\wedge1],
\]
Let $B_{t}(\omega):=\omega_{t}$ for $\omega\in\Omega$, $t\geq0$ be the
canonical process and $\mathcal{F}_{t}:=\sigma\{B_{s}:s\leq t\}$ for $t\geq0$
be the natural filtration of $B$. We denote $\mathcal{F}:=(\mathcal{F}%
_{t}\mathcal{)}_{t\geq0}.$ A mapping $\tau:\Omega\rightarrow[0,\infty]$ is called a stopping time if $\{\tau\leq t\}\in
\mathcal{F}_{t}$ for each $t\geq0$.
Let $\mathcal{P}$ be a family of probability measures on $(\Omega
,\mathcal{B}(\Omega))$, where $\mathcal{B}(\Omega)$ is the $\sigma$-algebra of all Borel sets. We set
\[
\mathcal{L}(\Omega):=\{X\in\mathcal{B}(\Omega):E_{P}[X]\ \text{exists for each
}\ P\in\mathcal{P}\}.
\]
We define the corresponding upper expectation by
\begin{equation}
\mathbb{\hat{E}}[X]:=\sup_{P\in\mathcal{P}}E_{P}[X]\in [-\infty,\infty], \ \ \ \text{for}%
\ X\in\mathcal{L}(\Omega).
\end{equation}
Then it is easy to check that the triple $(\Omega,\mathcal{L}(\Omega
),\mathbb{\hat{E}})$ forms a sublinear expectation space (see \cite{Peng 1}
for the definition).

For this $\mathcal{P}$, we define the corresponding upper capacity%
\[
c(A):=\sup_{P\in\mathcal{P}}P(A),\ \   \ \text{for}\ A\in\mathcal{B}(\Omega).
\]
A set $A\in\mathcal{B}(\Omega)$ is said to be polar if $c(A)=0$. We say a
property holds q.s. (quasi-surely) if it holds outside a polar set. In the
following, we do not distinguish two random variables if they coincide q.s.

We define the $L^{p}$-norm of random variables as $||X||_{p}:=(\mathbb{\hat
{E}}[|X|^{p}])^{\frac{1}{p}}$ for $p\geq1$ and set
\[
{L}^{p}(\Omega):=\{X\in\mathcal{B}(\Omega):||X||_{{p}}<\infty\}.
\]
Then ${L}^{p}(\Omega)$ is a Banach space under the norm $||\cdot||_{{p}}$. Let
$C_{b}(\Omega)$ be the space of all bounded, continuous functions on $\Omega$.
We denote the corresponding completion under the norm $||\cdot||_{p}$ by ${L}%
_{C}^{p}(\Omega)$.

\begin{definition}
A real function $X$ on $\Omega$ is said to be quasi-continuous if for each
$\varepsilon>0$, there exists an open set $O\subset\Omega$ with
$c(O)<\varepsilon$ such that $X|_{O^{c}}$ is continuous.
\end{definition}

\begin{definition}
We say that $X:\Omega\mapsto\mathbb{R}$ has a quasi-continuous version if
there exists a quasi-continuous function $Y:\Omega\mapsto\mathbb{R}$ such that
$X=Y$ q.s.
\end{definition}

The following result characterizes the space ${L}_{C}^{p}(\Omega)$ in the
measurable and integrable sense, which can be seen as a counterpart of Lusin's
theorem in the nonlinear expectation theory.

\begin{theorem}
[\cite{D-H-P}]\label{LG-ch}For each $p\geq1$, we have
\[
L_{C}^{p}(\Omega)=\{X\in\mathcal{B}(\Omega)\ :\ \ \lim\limits_{N\rightarrow
\infty}\mathbb{\hat{E}}[|X|^{p}I_{\{|X|\geq N\}}]=0\ \text{and}\ X\ \text{has
a quasi-continuous version}\}.
\]

\end{theorem}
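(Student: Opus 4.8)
The plan is to prove the two inclusions separately; write $\mathcal{J}^{p}$ for the set on the right-hand side. Two elementary facts about the capacity will be used repeatedly: $c$ is countably subadditive, since $c(\bigcup_{n}A_{n})=\sup_{P}P(\bigcup_{n}A_{n})\le\sum_{n}\sup_{P}P(A_{n})=\sum_{n}c(A_{n})$, and it satisfies a Markov inequality $c(\{|Z|>\lambda\})\le\lambda^{-p}\mathbb{\hat{E}}[|Z|^{p}]$, obtained by taking $\sup_{P\in\mathcal{P}}$ in the classical Markov inequality under each $P$. I will also use that $\mathbb{\hat{E}}$ is monotone and subadditive and that $\mathbb{\hat{E}}[I_{A}]=c(A)$.

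For $L_{C}^{p}(\Omega)\subseteq\mathcal{J}^{p}$, I would fix $X\in L_{C}^{p}(\Omega)$ together with $X_{n}\in C_{b}(\Omega)$ satisfying $\|X_{n}-X\|_{p}\to0$; in particular $\mathbb{\hat{E}}[|X|^{p}]<\infty$. To get the uniform-integrability condition I would use $(a+b)^{p}\le2^{p-1}(a^{p}+b^{p})$ and subadditivity to write
\[
\mathbb{\hat{E}}[|X|^{p}I_{\{|X|\ge N\}}]\le2^{p-1}\|X-X_{n}\|_{p}^{p}+2^{p-1}\mathbb{\hat{E}}[|X_{n}|^{p}I_{\{|X|\ge N\}}],
\]
then, given $\varepsilon>0$, first choose $n$ making the first term small, and with this $n$ fixed bound the second term by $2^{p-1}M_{n}^{p}c(\{|X|\ge N\})$, where $M_{n}=\|X_{n}\|_{\infty}$, which vanishes as $N\to\infty$ by the Markov inequality applied to $X$. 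This gives $\lim_{N}\mathbb{\hat{E}}[|X|^{p}I_{\{|X|\ge N\}}]=0$.

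To produce a quasi-continuous version I would pass to a subsequence with $\|X_{n_{k+1}}-X_{n_{k}}\|_{p}\le2^{-k}$ and set $O_{m}:=\bigcup_{k\ge m}\{|X_{n_{k+1}}-X_{n_{k}}|>2^{-k/2}\}$, which is open because the $X_{n_{k}}$ are continuous. By Markov and countable subadditivity, $c(O_{m})\le\sum_{k\ge m}2^{-kp/2}\to0$, while on $O_{m}^{c}$ the telescoping series $\sum_{k}(X_{n_{k+1}}-X_{n_{k}})$ converges uniformly, so $X_{n_{k}}$ converges there to a limit continuous on $O_{m}^{c}$. Since $c(\bigcap_{m}O_{m})=0$, this defines q.s. a function $\tilde{X}$ that is continuous on the complement of an open set of capacity $<\varepsilon$ for each $\varepsilon$, hence quasi-continuous, and $\tilde{X}=X$ q.s. because $L^{p}$-convergence forces q.s. convergence along the subsequence. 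Thus $X\in\mathcal{J}^{p}$.

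For the reverse inclusion $\mathcal{J}^{p}\subseteq L_{C}^{p}(\Omega)$, I would fix $X\in\mathcal{J}^{p}$ and, identifying $X$ with its quasi-continuous version, assume $X$ is quasi-continuous. The uniform-integrability hypothesis reduces matters to the bounded case: the truncations $X^{(N)}:=(X\wedge N)\vee(-N)$ stay quasi-continuous and satisfy $\|X-X^{(N)}\|_{p}^{p}\le\mathbb{\hat{E}}[|X|^{p}I_{\{|X|\ge N\}}]\to0$, so it suffices to approximate a bounded quasi-continuous $X$ with $|X|\le M$ by elements of $C_{b}(\Omega)$. Given $\varepsilon>0$, pick an open $O$ with $c(O)<\varepsilon$ on whose complement $X$ is continuous; since $\Omega$ is a metric space, hence normal, the Tietze extension theorem yields $\tilde{X}\in C_{b}(\Omega)$ with $\|\tilde{X}\|_{\infty}\le M$ and $\tilde{X}=X$ on $O^{c}$, whence
\[
\|X-\tilde{X}\|_{p}^{p}=\mathbb{\hat{E}}[|X-\tilde{X}|^{p}I_{O}]\le(2M)^{p}c(O)<(2M)^{p}\varepsilon.
\]
Letting $\varepsilon\to0$ shows $X$ lies in the $\|\cdot\|_{p}$-closure of $C_{b}(\Omega)$, i.e.\ $X\in L_{C}^{p}(\Omega)$. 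The genuinely delicate step is the construction of the quasi-continuous version in the first inclusion: one must control simultaneously the capacities of the exceptional open sets and the uniform convergence of the continuous approximants on their complements, so that the limit is truly continuous there and coincides with $X$ quasi-surely; the Markov/subadditivity estimates, the truncation, and the Tietze extension are all routine by comparison.
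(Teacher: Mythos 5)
Your argument is correct and complete: the truncation-plus-Tietze step for one inclusion, and the Markov/Borel--Cantelli construction of a quasi-continuous limit of a rapidly convergent subsequence for the other, are exactly the standard ingredients. Note that the paper itself states this theorem as a quotation from Denis--Hu--Peng \cite{D-H-P} and gives no proof; your write-up reproduces essentially the argument of that reference, so there is nothing to reconcile.
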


Moreover, we have the following monotone convergence results. It worthy noting
that different from the linear case, the downward convergence is quite restrictive.

\begin{proposition}
[\cite{D-H-P,Song}]\label{Myth2.1} Suppose $X_{n}$, $n\geq1$ and $X$ are
$\mathcal{B}(\Omega)$-measurable.

\begin{description}
\item[\rm{(1)}] Assume $X_{n}\uparrow X$ q.s. on $\Omega$ and $E_{P}[X_{1}%
^{-}]<\infty$ for all $P\in\mathcal{P}$. Then $\mathbb{\hat{E}}[X_{n}%
]\uparrow\mathbb{\hat{E}}[X].$

\item[\rm{(2)}] Assume $\mathcal{P}$ is weakly compact.

\begin{itemize}
\item[\rm{(a)}] If $\{X_{n}\}_{n=1}^{\infty}$ in ${L}_{C}^{1}(\Omega)$ satisfies
that $X_{n}\downarrow X$ q.s., then $\mathbb{\hat{E}}[X_{n}]\downarrow
\mathbb{\hat{E}}[X].$

\item[\rm{(b)}] For each closed set $F\in\mathcal{B}(\Omega)$, $c(F)=\inf
\{c(O):\ O\makebox{ open in}\ \mathcal{B}(\Omega),\ F\subset O\}.$
\end{itemize}
\end{description}
\end{proposition}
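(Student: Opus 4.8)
The plan is to prove the three assertions in order of increasing difficulty, deducing the outer regularity (2)(b) from the downward convergence (2)(a). For (1) I would argue measure by measure and then pass to the supremum. Fix $P\in\mathcal{P}$. Since $X_{n}\uparrow X$ with $X_{n}\geq X_{1}$ and $E_{P}[X_{1}^{-}]<\infty$, the classical monotone convergence theorem for increasing sequences dominated below by an integrable function gives $E_{P}[X_{n}]\uparrow E_{P}[X]$ in $[-\infty,+\infty]$. Taking the supremum over $P$ and interchanging the two suprema, which is legitimate because both are monotone, yields $\mathbb{\hat{E}}[X]=\sup_{P}\sup_{n}E_{P}[X_{n}]=\sup_{n}\mathbb{\hat{E}}[X_{n}]$, i.e. $\mathbb{\hat{E}}[X_{n}]\uparrow\mathbb{\hat{E}}[X]$. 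This step uses only the upper-expectation structure and no compactness.

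For (2)(a), monotonicity of $\mathbb{\hat{E}}$ already gives $\lim_{n}\mathbb{\hat{E}}[X_{n}]\geq\mathbb{\hat{E}}[X]$, so the content is the reverse inequality, and this is where weak compactness is indispensable. The engine I would establish first is a Dini-type lemma: if $f_{n}\in C_{b}(\Omega)$ with $0\leq f_{n}\leq f_{1}$ and $f_{n}\downarrow g$ pointwise, then $\mathbb{\hat{E}}[f_{n}]\downarrow\mathbb{\hat{E}}[g]$. To prove it, note $\mathbb{\hat{E}}[f_{n}]$ is decreasing and bounded below by $\mathbb{\hat{E}}[g]$; for the reverse, choose $P_{n}\in\mathcal{P}$ with $E_{P_{n}}[f_{n}]\geq\mathbb{\hat{E}}[f_{n}]-1/n$, and by the weak compactness of $\mathcal{P}$ extract a weakly convergent subsequence $P_{n_{k}}\rightharpoonup P_{0}\in\mathcal{P}$. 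For fixed $m$ and all $n_{k}\geq m$, monotonicity of the sequence gives $E_{P_{n_{k}}}[f_{n_{k}}]\leq E_{P_{n_{k}}}[f_{m}]\to E_{P_{0}}[f_{m}]$ by weak convergence, so $\lim_{n}\mathbb{\hat{E}}[f_{n}]\leq E_{P_{0}}[f_{m}]$ for every $m$; letting $m\to\infty$ and using dominated convergence under $P_{0}$ yields $\lim_{n}\mathbb{\hat{E}}[f_{n}]\leq E_{P_{0}}[g]\leq\mathbb{\hat{E}}[g]$.

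To upgrade this to a decreasing sequence $X_{n}\downarrow X$ in $L_{C}^{1}(\Omega)$, I would approximate in $\|\cdot\|_{1}$ by a decreasing sequence of continuous functions. Choosing $Y_{n}\in C_{b}(\Omega)$ with $\mathbb{\hat{E}}[|X_{n}-Y_{n}|]\leq\varepsilon 2^{-n}$ and setting $\bar{Y}_{n}:=\min(Y_{1},\dots,Y_{n})\in C_{b}(\Omega)$, one has $\bar{Y}_{n}\downarrow\bar{Y}$ for some limit $\bar{Y}$, together with the uniform estimate $\mathbb{\hat{E}}[|X_{n}-\bar{Y}_{n}|]\leq\varepsilon$ (using $X_{n}=\min_{k\leq n}X_{k}$ and subadditivity), which also gives $\mathbb{\hat{E}}[|X-\bar{Y}|]\leq\varepsilon$. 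Applying the Dini lemma to $\bar{Y}_{n}$, after a truncation from below whose error is controlled by the uniform integrability clause of Theorem \ref{LG-ch}, and using $|\mathbb{\hat{E}}[X_{n}]-\mathbb{\hat{E}}[\bar{Y}_{n}]|\leq\mathbb{\hat{E}}[|X_{n}-\bar{Y}_{n}|]\leq\varepsilon$, I would obtain $\lim_{n}\mathbb{\hat{E}}[X_{n}]\leq\mathbb{\hat{E}}[\bar{Y}]+\varepsilon\leq\mathbb{\hat{E}}[X]+2\varepsilon$; letting $\varepsilon\to0$ closes the argument.

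Finally, (2)(b) follows from the Dini lemma. For closed $F$ put $d(\omega,F):=\inf_{\eta\in F}\rho(\omega,\eta)$ and $f_{n}(\omega):=(1-n\,d(\omega,F))^{+}\in C_{b}(\Omega)$; since $F$ is closed, $f_{n}\downarrow I_{F}$, so $\mathbb{\hat{E}}[f_{n}]\downarrow c(F)$. For $\delta\in(0,1)$ the set $O_{n}:=\{f_{n}>1-\delta\}$ is open, contains $F$, and satisfies $I_{O_{n}}\leq(1-\delta)^{-1}f_{n}$, whence $c(O_{n})\leq(1-\delta)^{-1}\mathbb{\hat{E}}[f_{n}]$; taking $n$ large and $\delta$ small makes $c(O_{n})$ arbitrarily close to $c(F)$, which is the claimed outer regularity. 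The main obstacle throughout is the weak-compactness/Dini step inside (2)(a): without it a decreasing sequence of capacities need not converge to the capacity of its limit, which is precisely the phenomenon that makes downward convergence fail for general non-dominated families.
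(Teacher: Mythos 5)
First, note that the paper itself offers no proof of Proposition \ref{Myth2.1}; it is quoted from \cite{D-H-P,Song}, so your reconstruction can only be measured against those sources. Part (1) (apply the classical monotone convergence theorem under each fixed $P$ and then interchange the two suprema), the Dini-type lemma for a decreasing sequence in $C_{b}(\Omega)$ obtained from a weakly convergent subsequence of near-optimizers $P_{n}$, and the derivation of (2)(b) from that lemma with $f_{n}=(1-n\,d(\cdot,F))^{+}$ are all correct and are essentially the arguments of \cite{D-H-P}.

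The one step that does not close as written is the passage from the $C_{b}$ Dini lemma to a general decreasing sequence in $L_{C}^{1}(\Omega)$. Your Dini lemma is stated only for $0\le f_{n}\le f_{1}$, so to apply it to $\bar{Y}_{n}=\min(Y_{1},\dots,Y_{n})$ you must truncate from below, and you propose to control the truncation error by ``the uniform integrability clause of Theorem \ref{LG-ch}''. That clause controls $\mathbb{\hat{E}}[|W|I_{\{|W|\ge N\}}]$ for a single fixed $W\in L_{C}^{1}(\Omega)$, whereas here you need the bound uniformly in $n$; since $|\bar{Y}_{n}|$ is only dominated by (roughly) $X_{1}^{+}+X^{-}+\sum_{k}|X_{k}-Y_{k}|$, a uniform bound would require $\mathbb{\hat{E}}[X^{-}I_{\{X^{-}\ge N\}}]\to 0$. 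Nothing in the hypotheses gives this: $X^{-}$ is merely an increasing q.s.\ limit of elements of $L_{C}^{1}(\Omega)$ and need not be uniformly integrable (one may even have $\mathbb{\hat{E}}[X^{-}]=\infty$). Fortunately the gap is self-inflicted and easy to repair: your proof of the Dini lemma never uses $f_{n}\ge 0$ except in its last step, where ``dominated convergence under $P_{0}$'' should simply be replaced by monotone convergence from above ($f_{m}\downarrow g$ with $f_{m}\le f_{1}\in L^{1}(P_{0})$, so $E_{P_{0}}[f_{m}]\downarrow E_{P_{0}}[g]$ in $[-\infty,\infty)$); with the lemma stated for arbitrary decreasing sequences of bounded continuous functions it applies to $\bar{Y}_{n}$ directly and no truncation is needed. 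Alternatively, and closer to \cite{D-H-P}, one can dispense with $\bar{Y}_{n}$ altogether: the uniform estimate $|E_{P}[X_{m}]-E_{P}[Y]|\le\mathbb{\hat{E}}[|X_{m}-Y|]$ shows that $P\mapsto E_{P}[X_{m}]$ is weakly continuous on $\mathcal{P}$ for each fixed $m$, so the subsequence argument can be run on the $X_{n}$ themselves, ending with $\lim_{n}\mathbb{\hat{E}}[X_{n}]\le E_{P_{0}}[X_{m}]\downarrow E_{P_{0}}[X]\le\mathbb{\hat{E}}[X]$, which also covers the case $\mathbb{\hat{E}}[X]=-\infty$.
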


\begin{definition}
An $\mathcal{F}$-adapted process $Y=(Y_{t})_{t\geq0}\ $is called a
$\mathcal{P}$-martingale $($$\mathcal{P}$-supermartingale, $\mathcal{P}%
$-submartingale, $\mathcal{P}$-semimartingale resp.$)$ if it is a martingale
$($supermartingale, submartingale, semimartingale resp.$)$ under each
$P\in\mathcal{P}$.
\end{definition}

The following is the quasi-continuity concept for processes, which is slightly
different from the one for random variables.

\begin{definition}
[\cite{Song1,Song}]We say that a process $F=(F_{t})_{t\geq0}$ is
quasi-continuous $($on $\Omega\times[0,\infty)$$)$ if for each $\varepsilon
>0$, there exists an open set $G\subset\Omega$ with $c(G)<\varepsilon$ such
that $F_{\cdot}(\cdot)$ is continuous on $G^{c}\times\lbrack0,\infty)$.
\end{definition}

\begin{remark}
\label{Re2.1}
\upshape{ From the definition, it is easy to see that, if the process
$F=(F_{t} )_{t\geq0}$ is quasi-continuous $($in the process setting$)$, then for each $t$, the random
variable $F_{t}$ is quasi-continuous $($in the random variable setting$)$.}
\end{remark}
\begin{definition}
Let $X,Y:\Omega\times[0,\infty)\rightarrow\mathbb{R}$ be two processes. We say $X$ is a modification of $Y$ if  for each $t\in [0,\infty)$, $X_t=Y_t$ q.s. We say $X$ is indistinguishable from $($or a version of$)$ $Y$ if $X_t=Y_t$  for all  $t\in [0,\infty)$, q.s.
\end{definition}
\section{Exit times for multi-dimensional nonlinear semimartingales}

Let $Y$ be a $d$-dimensional continuous $\mathcal{P}$-semimartingale under a
given weakly compact family $\mathcal{P}$ of probability measures. Assume
that, under each $P\in\mathcal{P}$, we have the decomposition $Y_{t}=M_{t}%
^{P}+A_{t}^{P},$ where $M_{t}^{P}$ is a $d$-dimensional continuous local
martingale and $A_{t}^{P}$ is a $d$-dimensional finite-variation process. We
also denote by $\langle Y\rangle^{P}=\langle M^{P}\rangle^{P}$ the quadratic
variation under $P,\ $and shall often omit the superscript $P$ on
$\langle\hspace{1mm}%
\cdot\hspace{1mm}\rangle$ when there is no danger of ambiguity.

\subsection{Quasi-continuity of exit times}

For each set $D\subset\mathbb{R}^{d}$, we define the exit times of $Y$ from
$D$ by
\[
{\tau}_{D}(\omega):=\inf\{t\geq0:Y_{t}(\omega)\in D^{c}\},\ \ \ \text{for}%
\ \omega\in\Omega.
\]

\begin{definition}
Let $E$ be a metric space. We say that a function $f:E\rightarrow
[-\infty,\infty] $ is upper $($lower resp.$)$
semi-continuous if for each $x_{0}\in E$,
\[
\limsup_{x\rightarrow x_{0}}f(x)\leq f(x_{0})\quad(\liminf_{x\rightarrow
x_{0}}f(x)\geq f(x_{0}) \ \text{resp.}).
\]

\end{definition}

\begin{definition}
An open set $O$ is said to satisfy the exterior ball condition at
$x\in\partial O$ if there exists an open ball $U(z,r)$ with center $z$ and
radius $r$ such that $U(z,r)\subset O^{c}$ and $x\in\partial U(z,r)$. If every boundary point
$x\in\partial O$ satisfies the exterior ball condition, then $O$ is said to satisfy the exterior ball condition.
\end{definition}

Given an open set $Q$ in $\mathbb{R}^{d}$, we denote
\begin{equation}
\label{eq3.1}\Omega^{\omega}=\{\omega^{\prime} \in\Omega:\omega_{t}^{\prime
}=\omega_{t} \ \text{on}\ [0,\tau_{Q}(\omega)]\},\ \ \ \text{for each}%
\ \omega\in\Omega.
\end{equation}

In this section, we shall mainly deal with nonlinear semimartinales $Y$
possessing a local growth condition at the boundary:

\begin{description}
\item[$(H)$] Given each $P\in\mathcal{P}$. For $P$-a.s. $\omega$ such that
${\tau}_{Q}(\omega)<\infty,$ there exist some stopping time $\sigma^{\omega}$
and constants $\lambda^{\omega},\varepsilon^{\omega}>0$ (these three
quantities may depend on $P,\omega$ but are supposed to be uniform for all $\omega^{\prime}\in
\Omega^{\omega}$) so that for $\omega^{\prime}\in\Omega^{\omega},$

\begin{description}
\item[\rm{{{(i)}}}] $\sigma^{\omega}(\omega^{\prime})>0$;

\item[\rm{{{(ii)}}}] On the interval of $t\in\lbrack0,\sigma^{\omega
}(\omega^{\prime})\wedge(\tau_{{\overline{Q}}}(\omega^{\prime})-\tau_{{{Q}}%
}(\omega^{\prime}))]$, it holds that

\begin{itemize}
\item[(a)] {Non-degeneracy:} $d\langle M^{P}\rangle_{{\tau}_{Q}(\omega
)+t}(\omega^{\prime})\geq\lambda^{\omega}\text{tr}[d\langle M^{P}%
\rangle_{_{{\tau}_{Q}(\omega)+t}}(\omega^{\prime})]I_{d\times d}%
$\ \ \text{and} \ \ $\text{tr}[d\langle M^{P}\rangle_{_{{\tau}_{Q}(\omega)+t}}%
(\omega^{\prime})]>0,$

\item[(b)] {Controllability:} $\text{tr}[d\langle M^{P}\rangle_{_{{\tau}%
_{Q}(\omega)+t}}(\omega^{\prime})]\geq\varepsilon^{\omega}|dA_{_{{\tau}%
_{Q}(\omega)+t}}^{P}(\omega^{\prime})|.$
\end{itemize}
\end{description}
\end{description}

The following theorem is the main result of this section concerning the regularity of exit times.

\begin{theorem}
\label{Myth3.5} Let $Q$ be an open set satisfying the exterior ball condition
and let $\Omega^{\omega}$ be defined as in (\ref{eq3.1}). Suppose that $Y$ is
quasi-continuous and satisfies the local growth condition $(H)$. Then for any
$\delta>0$, there exists an open set $O\subset\Omega$ such that $c(O)\leq
\delta$ and on $O^{c}$, it holds that:

\begin{description}
\item[\rm{{(i)}}] ${\tau}_{Q}$ is lower semi-continuous and ${\tau
}_{\overline{Q}}$ is upper semi-continuous;

\item[\rm{{(ii)}}] ${\tau}_{Q}={\tau}_{\overline{Q}}$.
\end{description}
\end{theorem}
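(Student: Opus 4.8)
The plan is to split the two conclusions according to which hypotheses they consume: the semicontinuities in (i) will follow purely from the quasi-continuity of $Y$, while the equality in (ii) will follow from the growth/regularity structure encoded in $(H)$ together with the exterior ball condition, after which a capacity-approximation step converts an \emph{a priori} polar exceptional set into the complement of a small open set. To begin, I would fix $\delta>0$ and use the quasi-continuity of the process $Y$ to select an open set $G\subset\Omega$ with $c(G)<\delta/2$ such that $(\omega,t)\mapsto Y_t(\omega)$ is jointly continuous on $G^c\times[0,\infty)$. All of the subsequent analysis takes place on the closed set $G^c$.

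For (i) on $G^c$, I would first upgrade joint continuity to continuity into path space: for each $s>0$ the map $\omega\mapsto (Y_t(\omega))_{t\in[0,s]}$ is continuous from $G^c$ into $C([0,s];\mathbb{R}^d)$ with the sup-norm, which follows from joint continuity together with the compactness of $[0,s]$ by a standard tube/subsequence argument. Since $Q$ is open, $\tau_Q$ is the hitting time of the closed set $Q^c$: if $s<\tau_Q(\omega)$ then $\{Y_t(\omega):t\le s\}$ is a compact subset of $Q$ at positive distance from $Q^c$, so uniform closeness of paths forces $\tau_Q(\omega')\ge s$ for $\omega'\in G^c$ near $\omega$; letting $s\uparrow\tau_Q(\omega)$ gives lower semicontinuity of $\tau_Q$ on $G^c$. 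Dually, $\overline{Q}^c$ is open, so if $\tau_{\overline{Q}}(\omega)<s$ there is $t_0<s$ with $Y_{t_0}(\omega)\in\overline{Q}^c$; openness and continuity give $Y_{t_0}(\omega')\in\overline{Q}^c$ for nearby $\omega'\in G^c$, whence $\tau_{\overline{Q}}(\omega')\le t_0<s$ and $\tau_{\overline{Q}}$ is upper semicontinuous on $G^c$.

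The heart of the matter, and the step I expect to be the main obstacle, is the quasi-sure equality $\tau_Q=\tau_{\overline{Q}}$; equivalently, that $B:=\{\tau_Q<\tau_{\overline{Q}}\}$ (note $\tau_Q\le\tau_{\overline{Q}}$ always, since $\overline{Q}^c\subset Q^c$) is polar. I would prove this for each $P\in\mathcal{P}$ separately by an auxiliary-function (barrier) argument in the spirit of Lions and Menaldi. At a boundary point reached at time $\tau_Q$, the exterior ball condition furnishes a ball $U(z,r)\subset Q^c$ touching $\partial Q$ there, and one builds a local supersolution of the form $w(y)=\phi(|y-z|)$, with $\phi$ chosen so that the relevant second-order form $\mathcal{L}w$ has a strict sign near the contact point and the process is forced to leave $\overline{Q}$ immediately. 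The novelty needed here is to run this when the quadratic-variation rate of $Y$ is possibly unbounded: this is exactly where $(H)$ enters, with non-degeneracy (a) controlling the diffusion from below by its trace and controllability (b) dominating the drift $dA^P$ by that same trace, so that the sign of $\mathcal{L}w$ is governed by the scale-invariant geometry rather than by the size of $d\langle M^P\rangle$. Transferring this to the random time $\tau_Q$ requires the regular conditional probability distributions of Stroock and Varadhan to localize at $\tau_Q$ and reduce the statement to the deterministic barrier estimate along the fibre $\Omega^\omega$ of (\ref{eq3.1}), on which the constants in $(H)$ are uniform; combining these yields $P(\tau_Q<\tau_{\overline{Q}})=0$, hence $c(B)=0$.

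Finally I would assemble the open set. On $G^c$ the function $f:=\tau_{\overline{Q}}-\tau_Q\ge 0$ is upper semicontinuous, being the sum of the upper semicontinuous $\tau_{\overline{Q}}$ and $-\tau_Q$ (the negative of a lower semicontinuous function); consequently each super-level set $\{f\ge 1/n\}\cap G^c$ is closed in $G^c$, hence closed in $\Omega$, and is contained in $B$, so it is polar. By Proposition \ref{Myth2.1}(2)(b) (where the weak compactness of $\mathcal{P}$ is used) I cover each such closed polar set by an open set $O_n$ with $c(O_n)<\delta\,2^{-n-1}$. Setting $O:=G\cup\bigcup_{n\ge1}O_n$, countable subadditivity of $c$ gives $c(O)\le \delta/2+\sum_{n\ge1}\delta\,2^{-n-1}\le\delta$. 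On $O^c\subset G^c$ the two semicontinuities of (i) hold, and since $\omega\in O^c$ lies outside every $O_n$ we have $f(\omega)<1/n$ for all $n$, i.e.\ $f(\omega)=0$ and $\tau_Q(\omega)=\tau_{\overline{Q}}(\omega)$, which is (ii).
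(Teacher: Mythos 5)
Your overall architecture coincides with the paper's: the semicontinuities come from joint continuity of $Y$ on the complement of a small open set (Lemma \ref{Myle3.1}); the quasi-sure identity $\tau_Q=\tau_{\overline{Q}}$ comes from an exterior-ball barrier argument under $(H)$ combined with Stroock--Varadhan regular conditional probabilities to restart the process at the boundary (Propositions \ref{Myth3.1} and \ref{Myth3.7}, Lemma \ref{Myth3.8}); and the passage from a polar exceptional set to the complement of a small open set uses Proposition \ref{Myth2.1} (2) (b) applied to closed subsets of $G^c$ (Proposition \ref{Myth3.18}). Your semicontinuity argument (tube/compactness for $\tau_Q$, openness of $\overline{Q}^c$ for $\tau_{\overline{Q}}$) is equivalent to the paper's. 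The barrier step is only an outline in your write-up, but the outline names exactly the right ingredients: the paper's concrete barrier is $h(y)=e^{-k|y-z|^{2}}$ with $k\to\infty$, and the non-degeneracy and controllability inequalities of $(H)$ are consumed precisely where you say they are.

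The one step that fails as written is the final covering. You claim $f:=\tau_{\overline{Q}}-\tau_Q$ is upper semicontinuous on $G^c$ as a sum of upper semicontinuous functions, so that $\{f\ge 1/n\}\cap G^c$ is closed. But $f$ is of the form $\infty-\infty$ on $\{\tau_Q=\infty\}$ (where you must set $f=0$ for $\bigcup_n\{f\ge 1/n\}$ to equal $\{\tau_Q<\tau_{\overline{Q}}\}$), and the addition theorem for extended-valued semicontinuous functions does not cover this case. Concretely, a sequence $\omega_k\to\omega$ in $G^c$ with $\tau_Q(\omega_k)=k$, $\tau_{\overline{Q}}(\omega_k)=k+1$ and $\tau_Q(\omega)=\tau_{\overline{Q}}(\omega)=\infty$ is compatible with both semicontinuities, yet $f(\omega_k)=1$ while $f(\omega)=0$, so the superlevel set need not be closed. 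The repair is cheap: either intersect further with the closed sets $\{\tau_Q\le m\}$, $m\in\mathbb{N}$ (on which the two semicontinuities do yield $\tau_{\overline{Q}}(\omega)\ge\tau_Q(\omega)+1/n$ in the limit), or use the paper's decomposition
\[
\{\tau_Q<\tau_{\overline{Q}}\}\cap G^c=\bigcup_{s<r;\,s,r\in\mathbb{Q}}\bigl(\{\tau_Q\le s\}\cap\{\tau_{\overline{Q}}\ge r\}\bigr)\cap G^c,
\]
whose pieces are manifestly closed and which never forms a difference of possibly infinite quantities.
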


\begin{remark}
\upshape{
	Let us explain the meaning of the three inequalities in {\rm (ii)} of the condition {$ (H)$}.
	
	\begin{description}
		\item[{\rm (a)}] We first give a general discussion. For two $($signed$)$ measures
	$\mu_{1}$ and  $\mu_{2}$ on some sub-interval $I$ of $\mathbb{R}$, by $d\mu_{1}\geq
	d\mu_{2}$ we mean  that $\mu_{1}(A)\geq\mu_{2}(A)$ for each $A\in
	\mathcal{B}(I).$ If  $\mu_{i},i=1,2,$ are Lebesgue-Stieltjes
	measures corresponding to  finite-variation functions $f_{i}$  respectively,
	$d\mu_{1}\geq d\mu_{2}$ is  equivalent to the assertion that $f_{1}-f_{2}$ is non-decreasing.
	
Such a discussion obviously also holds for the more general case that the two measures $\mu
_{1}$  and $\mu_{2}$ take $\mathbb{S}(d)$-values, where
$\mathbb{S}(d)$ is the set of $d\times d$ symmetric matrices endowed with the
usual  order, i.e., for $\gamma_1,\gamma_2\in \mathbb{S}(d)$ we write $\gamma_1\geq \gamma_2$ if $\gamma_1-\gamma_2$ is nonnegative definite.
\item[{\rm (b)}] On the interval of $t\in[0,\sigma^{\omega}(\omega^{\prime})\wedge (\tau_{{\overline{Q}}}(\omega')-\tau_{{{Q}}}(\omega'))]$, the Lebesgue-Stieltjes measures $d\langle M^{P}\rangle_{{\tau}_{Q}(\omega)+t}(\omega^{\prime})$, $\text{tr}[d\langle M^{P}\rangle_{_{{\tau}_{Q}(\omega)+t}}(\omega^{\prime})]$ and $dA_{_{{\tau}%
		_{Q}(\omega)+t}}^{P}(\omega^{\prime})$  are generated by increasing functions $t\rightarrow \langle M^{P}\rangle_{{\tau}_{Q}(\omega)+t}(\omega^{\prime})$, $t\rightarrow\text{tr}[\langle M^{P}\rangle_{_{{\tau}_{Q}(\omega)+t}}(\omega^{\prime})]$ and $t\rightarrow A_{_{{\tau}%
		_{Q}(\omega)+t}}^{P}(\omega^{\prime})$ respectively.
	
The first inequality in (a) of  $(H)$ means that the
measure $d\langle M^{P}\rangle_{{\tau}_{Q}(\omega)+t}(\omega^{\prime})$  is no smaller than $\lambda^{\omega}\text{tr}[d\langle M^{P}\rangle_{_{{\tau}_{Q}(\omega)+t}}(\omega^{\prime})]I_{d\times d}$, and the one in (b) of  $(H)$ has a similar meaning that the measure $\text{tr}[d\langle M^{P}\rangle_{_{{\tau}_{Q}(\omega)+t}}(\omega^{\prime})]$ is no smaller than $\varepsilon^{\omega}|dA_{_{{\tau}_{Q}(\omega)+t}}^{P}(\omega^{\prime})|,$ where $|dA_{_{{\tau}_{Q}(\omega)+t}}^{P}(\omega^{\prime})|$ is the total variation measure of $dA_{_{{\tau}_{Q}(\omega)+t}}^{P}(\omega^{\prime})$.
{Moreover, the second inequality in (a) of $(H)$ means  that the function $t\rightarrow \text{tr}[\langle M^{P}\rangle_{_{{\tau}_{Q}(\omega)+t}}(\omega^{\prime})]$ that generates the measure $\text{tr}[d\langle M^{P}\rangle_{_{{\tau}_{Q}(\omega)+t}}(\omega^{\prime})]$ is strictly increasing.}
\end{description}
}
\end{remark}

\begin{remark}\label{Myrem3.1}
\upshape{\begin{description}
			\item[{\rm (i)}] A simple and sufficient condition of {$ (H)$} is the case that $\lambda,\varepsilon$ are independent of $\omega$ and the growth condition is global, i.e.,
	\begin{description}
	\item[{$ (H')$}] for each $P$, there exist constants $\lambda,\varepsilon>0$ $($may depend on $P$$)$
		such that $d\langle M^{P}\rangle_{t}\geq\lambda$tr$[d\langle M^{P}\rangle
		_{t}]I_{d\times d}$, tr$[d\langle M^{P}\rangle_{t}]>0$ and tr$[d\langle M^{P}\rangle_{t} ]\geq\varepsilon
		|dA_{t}^{P}|$  on $[0,\tau_{{\overline{Q}}}]$, $ P$-a.s.
	\end{description}
	Indeed, we can take $\sigma^\omega\equiv t$ for any given $t>0$ in this situation, and then {$ (H)$} holds.

\item[{\rm (ii)}]	If for each $P$, it holds that $\lambda I_{d\times d}\leq \frac{d\langle M^{P}\rangle_{t}}{dt}\leq\Lambda I_{d\times
		d}$ and $|\frac{dA_{t}^{P}}{dt}|\leq C$  on $[0,\tau_{{\overline{Q}}}]$,  for some
	constants $0<\lambda\leq\Lambda,C\geq0$ $($may depend on $P$$)$, $P$-a.s., then {$ (H')$} is satisfied.
\end{description}
}
\end{remark}

\begin{remark}
\upshape{
	We discuss two special situations mainly based on the condition {$(H')$}. Similar results hold for {$ (H)$} by a straightforward modification. Since the symbols for the latter is more complicated and so is omitted.
}

\begin{description}
\item[\rm{(i)}] If $Y$ is a $\mathcal{P}$-martingale, i.e., $A^{P}%
\equiv0$ for each $P\in\mathcal{P}$, then the inequality tr$[d\langle M^{P}\rangle_{t} ]\geq\varepsilon
|dA_{t}^{P}|$ in {$(H^{\prime})$} holds trivially by taking  $\varepsilon=1$ .

\item[\rm{(ii)}] When $d=1,$ the inequality $d\langle M^{P}\rangle_{t}%
\geq\lambda$tr$[d\langle M^{P}\rangle_{t}]I_{d\times d}$ in {$(H^{\prime})$}
is just $d\langle M^{P}\rangle_{t}\geq\lambda d\langle M^{P}\rangle_{t},$ and
thus  holds for $\lambda= 1.$ If moreover $A^{P}\equiv0$, then
{$(H^{\prime})$} reduces to $d\langle M^{P}\rangle_{t}>0$ on $[0,\tau
_{{\overline{Q}}}]$.
\end{description}
\end{remark}

Before presenting the proof, we shall state a direct consequence of Theorem
\ref{Myth3.5} concerning the quasi-continuity of exit times. Note that ${\tau
}_{Q}$ and ${\tau}_{\overline{Q}}$ may take the value $+\infty$. The fact that
${\tau}_{Q}$ is lower semi-continuous, ${\tau}_{\overline{Q}}$ is upper
semi-continuous and ${\tau}_{Q}={\tau}_{\overline{Q}}$ does not imply that
${\tau}_{Q}$ and ${\tau}_{\overline{Q}}$ are continuous. In general, we can
get the quasi-continuity by a truncation manipulation as follows.

\begin{corollary}
\label{Myth3.4} Assume that the conclusion of Theorem \ref{Myth3.5} is true
for ${\tau}_{Q}$ and ${\tau}_{\overline{Q}}$.

\begin{description}
\item[\rm{(i)}] If $X$ is a quasi-continuous random variable, then ${\tau
}_{Q}\wedge X$ and ${\tau}_{\overline{Q}}\wedge X$ are both quasi-continuous.

\item[\rm{(ii)}] If $X\in L_{C} ^{1}(\Omega)$, then ${\tau}_{Q}\wedge X$
and ${\tau}_{\overline{Q}}\wedge X$ both belong to $L_{C}^{1}(\Omega)$.
\end{description}
\end{corollary}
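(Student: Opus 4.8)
The plan is to derive both parts from a single continuity statement for the truncated exit time, obtained by fusing the three conclusions of Theorem \ref{Myth3.5}, and then to pass to $L_C^1$ via the characterization in Theorem \ref{LG-ch}. For part (i), fix $\varepsilon>0$. I would apply Theorem \ref{Myth3.5} with $\delta=\varepsilon/2$ to get an open set $O_1$ with $c(O_1)\leq\varepsilon/2$ on whose complement $\tau_Q$ is lower semi-continuous, $\tau_{\overline{Q}}$ is upper semi-continuous, and $\tau_Q=\tau_{\overline{Q}}$. Since $X$ is quasi-continuous, I would also choose an open $O_2$ with $c(O_2)<\varepsilon/2$ such that $X|_{O_2^c}$ is continuous. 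Setting $O:=O_1\cup O_2$, subadditivity of the capacity gives $c(O)\leq c(O_1)+c(O_2)<\varepsilon$, and it remains to prove that $\tau_Q\wedge X$ and $\tau_{\overline{Q}}\wedge X$ are continuous on $O^c=O_1^c\cap O_2^c$.

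The crux is an observation that simultaneously yields continuity and disposes of the possible value $+\infty$. On $O^c$ the function $X$ is real-valued and continuous, hence both lower and upper semi-continuous. On one hand, $\tau_Q\wedge X$ is the minimum of the lower semi-continuous $\tau_Q$ and the continuous $X$, and the minimum of two (extended-real-valued) lower semi-continuous functions is lower semi-continuous, so $\tau_Q\wedge X$ is lower semi-continuous on $O^c$. On the other hand, using $\tau_Q=\tau_{\overline{Q}}$ on $O^c$ I may rewrite $\tau_Q\wedge X=\tau_{\overline{Q}}\wedge X$, which is the minimum of the upper semi-continuous $\tau_{\overline{Q}}$ and the continuous $X$, and the minimum of two upper semi-continuous functions is upper semi-continuous, so $\tau_Q\wedge X$ is also upper semi-continuous on $O^c$. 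Being both, it is continuous there; moreover $X\wedge 0\leq\tau_Q\wedge X\leq X$ (using $\tau_Q\geq 0$ and the finiteness of $X$) shows it is genuinely real-valued, so the infinite value of $\tau_Q$ causes no trouble once the truncation is in force. Since $\tau_Q\wedge X=\tau_{\overline{Q}}\wedge X$ on $O^c$, both functions are continuous on $O^c$, hence both quasi-continuous, which is part (i).

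For part (ii), I would invoke Theorem \ref{LG-ch}, so it suffices to verify its two defining conditions for $Z:=\tau_Q\wedge X$. Since $X\in L_C^1(\Omega)$, it admits a quasi-continuous version $\tilde X$ with $X=\tilde X$ q.s.; then $Z=\tau_Q\wedge\tilde X$ q.s., and by part (i) the right-hand side is quasi-continuous, so $Z$ has a quasi-continuous version. For the integrability condition, the elementary pointwise bound $|\tau_Q\wedge X|\leq|X|$ (again from $\tau_Q\geq 0$) gives $\{|Z|\geq N\}\subset\{|X|\geq N\}$ and hence $|Z|I_{\{|Z|\geq N\}}\leq|X|I_{\{|X|\geq N\}}$; monotonicity of $\mathbb{\hat{E}}$ then yields $\mathbb{\hat{E}}[|Z|I_{\{|Z|\geq N\}}]\leq\mathbb{\hat{E}}[|X|I_{\{|X|\geq N\}}]\to 0$ as $N\to\infty$, the limit vanishing by Theorem \ref{LG-ch} applied to $X$. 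Thus $Z\in L_C^1(\Omega)$, and the identical argument handles $\tau_{\overline{Q}}\wedge X$.

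I expect the only genuinely delicate point to be the treatment of the value $+\infty$: in general neither $\tau_Q$ nor $\tau_{\overline{Q}}$ is continuous on $O^c$ (one is merely lower, the other merely upper semi-continuous), and it is precisely the coincidence $\tau_Q=\tau_{\overline{Q}}$ on $O^c$ that allows the truncation $\wedge X$ to inherit lower semi-continuity from $\tau_Q$ and upper semi-continuity from $\tau_{\overline{Q}}$ at the same time. The remaining steps are routine bookkeeping with the subadditivity of the capacity and the monotonicity of the upper expectation.
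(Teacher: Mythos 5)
Your proposal is correct and follows essentially the same route as the paper: form the union of the two exceptional open sets, use that $\tau_Q\wedge X$ is lower semi-continuous, $\tau_{\overline{Q}}\wedge X$ is upper semi-continuous, and that they coincide on the complement to conclude continuity, then pass to $L_C^1$ via the bound $|\tau_Q\wedge X|\le|X|$ and Theorem \ref{LG-ch}. The only difference is that you spell out a few routine details (semi-continuity of minima, the role of finiteness of $X$) that the paper leaves implicit.
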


\begin{proof}
(i) By assumption, we can find an open set $O_{1}\subset\Omega$ such that
$c(O_{1})\leq\varepsilon$ and $X$ is continuous on $(O_{1})^{c}$. Moreover,
from Theorem \ref{Myth3.5}, we can choose an open set $O_{2}\subset\Omega$
such that $c(O_{2})\leq\varepsilon$ and on $(O_{2})^{c}$, ${\tau}_{Q}$ and
${\tau}_{\overline{Q}}$ are lower  and upper semi-continuous
respectively, and ${\tau}_{{Q}}={\tau}_{\overline{Q}}$. Denote $O=O_{1}\cup
O_{2}$. Then $c(O)\leq2\varepsilon$ and on $O^{c}$, it holds that ${\tau}%
_{{Q}}\wedge X:\Omega\rightarrow\mathbb{R}$ is lower semi-continuous, ${\tau
}_{\overline{Q}}\wedge X:\Omega\rightarrow\mathbb{R}$ is upper
semi-continuous, and
\[
{\tau}_{{Q}}\wedge X={\tau}_{\overline{Q}}\wedge X.
\]
From this, we deduce that ${\tau}_{{Q}}\wedge X$ and ${\tau}_{\overline{Q}%
}\wedge X$ are continuous on $O^{c}$.

(ii) From (i), ${\tau}_{Q}\wedge X$ is quasi-continuous. Noting that $|{\tau
}_{Q}\wedge X|\leq|X|$, then
\[
\hat{\mathbb{E}}[|{\tau}_{Q}\wedge X|I_{\{|{\tau}_{Q}\wedge X|>k\}}]\leq
\hat{\mathbb{E}}[|X|I_{\{|X|>k\}}]\rightarrow0,\ \ \ \text{as}\ k\rightarrow
\infty.
\]
Now the desired result follows from Theorem \ref{LG-ch}.
\end{proof}

\begin{remark}
\upshape{
Typically, we shall often take $X\equiv t$, for any fixed $t$, in the above corollary.

 Assume that $d=1$ and
$Y$ is a one-dimensional $\mathcal{P}$-martingale. Then from Corollary
\ref{Myth3.4}, we deduce that ${\tau}_{Q}\wedge t$ is quasi-continuous if
$d\langle M^{P}\rangle_{t}>0$ $P$-a.s., for each $P\in\mathcal{P}$, and $Q$
satisfies the exterior ball condition. In particular, if we take
$Q=(-\infty,a)$ for $a\in\mathbb{R}$, then
\[
{\tau}_{{Q}}(\omega)=\inf\{t\geq0:Y_{t}(\omega)>a\}
\]
and we get the result in \cite{Song}.
}
\end{remark}

Now we proceed to the proof of Theorem \ref{Myth3.5}. We first present a
result which shows that $Y$ originating at the boundary point of $Q$ with
exterior ball will exit $\overline{Q}$ immediately.

\begin{proposition}
\label{Myth3.1} Let $Q$ be an open set satisfying the exterior ball condition
at some $x\in\partial Q$. Assume $P$ is a probability measure such that
$Y=M^{P}+A^{P}$ is a continuous semimartingale satisfying $Y_{0}=x$ P-$a.s$
and the following local growth assumption at $x$:

\begin{description}
\item[$(A)$] There exists some stopping time $\sigma>0$ $P$-a.s. and constants
$\lambda,\varepsilon>0$ such that $d\langle M^{P}\rangle_{t}\geq
\lambda\text{tr}[d\langle M^{P}\rangle_{t}]I_{d\times d},$ $\text{tr}[d\langle
M^{P}\rangle_{t}]>0$ and $\text{tr}[d\langle M^{P}\rangle_{t}]\geq
\varepsilon|dA_{t}^{P}|$ on $[0,\sigma\wedge\tau_{{\overline{Q}}}]$,
$P\text{-a.s.}$
\end{description}
Then we have $\tau_{\overline{Q}}=0$ $P$-a.s., i.e., $P$-a.s. for each
$\delta>0$, there exists a point $t\in(0,\delta]$ such that $Y_{t}\in
\overline{Q}^{c}$.
\end{proposition}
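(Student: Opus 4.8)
The plan is to set up an auxiliary (barrier) function near the exterior ball and use Itô's formula under $P$ to show that $Y$ must leave $\overline{Q}$ in arbitrarily small time. Since $Q$ satisfies the exterior ball condition at $x\in\partial Q$, fix the ball $U(z,r)\subset Q^c$ with $x\in\partial U(z,r)$, so $|x-z|=r$. The key geometric observation is that $\overline{Q}^c\supset U(z,r)\setminus\{x\}$ is the open ball with a single boundary point removed, so to show $\tau_{\overline{Q}}=0$ it suffices to show that $Y$ enters $U(z,r)$ in arbitrarily small time $P$-a.s. I would measure progress by the distance-type function $\phi(y):=|y-z|^2$ (or a smooth increasing function of it), noting $\phi(x)=r^2$ and $\phi(y)<r^2$ exactly on the open ball $U(z,r)$. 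The strategy is to prove that $\phi(Y_t)$ has a strict tendency to decrease below $r^2$ right after time $0$, forcing $Y$ into the exterior ball.

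First I would apply Itô's formula to $\phi(Y_t)=|Y_t-z|^2$ up to the stopping time $\sigma\wedge\tau_{\overline{Q}}$. This yields
\[
\phi(Y_t)-\phi(Y_0)=2\int_0^t (Y_s-z)\cdot dM^P_s+2\int_0^t (Y_s-z)\cdot dA^P_s+\sum_{i}\int_0^t d\langle M^{P,i}\rangle_s,
\]
where the last term is $\int_0^t \mathrm{tr}[d\langle M^P\rangle_s]$. The drift contributions are the two finite-variation terms. The controllability assumption (b) gives $\mathrm{tr}[d\langle M^P\rangle_s]\geq \varepsilon|dA^P_s|$, so on a small time interval the quadratic-variation term dominates the drift term $2(Y_s-z)\cdot dA^P_s$ in absolute value, since $|Y_s-z|$ stays close to $r$ by continuity. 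Concretely, I would choose $\sigma$ (shrinking the given one if necessary, still positive $P$-a.s.) small enough that $|Y_s-z|\leq 2r$ and the martingale part plus drift cannot undo the positive growth of the trace term; the upshot is that $\phi(Y_t)-r^2$ is bounded above by a negative quantity driven by $-\mathrm{tr}[d\langle M^P\rangle]$ plus a mean-zero martingale.

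The decisive step is converting this into an almost-sure statement that $\phi(Y_t)<r^2$ for some arbitrarily small $t$. Here the non-degeneracy assumption (a), $\mathrm{tr}[d\langle M^P\rangle_s]>0$, is essential: it guarantees the quadratic-variation clock is strictly increasing at time $0$, so the deterministic ``push'' inward is genuinely active on every right-neighborhood of $0$. I would argue by contradiction: if $\tau_{\overline{Q}}>0$ on a set of positive $P$-measure, then on that set $Y$ stays in $\overline{Q}\subset U(z,r)^c$, i.e. $\phi(Y_t)\geq r^2$ for all small $t$, hence $\phi(Y_t)-r^2\geq 0$. Taking expectations (optional-stopping on the local martingale after a localizing sequence, then using the growth bounds to control the drift against the trace term) should force $\hat E_P[\int_0^{t}\mathrm{tr}[d\langle M^P\rangle_s]]\leq 0$ in the limit of an appropriate comparison, contradicting strict positivity of the trace measure on $[0,\sigma\wedge\tau_{\overline{Q}}]$. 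The first inequality in (a), $d\langle M^P\rangle_t\geq \lambda\,\mathrm{tr}[d\langle M^P\rangle_t]I_{d\times d}$, enters to ensure the martingale increment $2(Y_s-z)\cdot dM^P_s$ genuinely fluctuates in all directions, so that $\phi(Y_t)$ cannot be pinned at or above $r^2$ by degenerate diffusion.

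The main obstacle I anticipate is the non-global, pathwise-random nature of the coefficients: because $\langle M^P\rangle$ may have an unbounded rate of change and $\lambda,\varepsilon,\sigma$ vary with $\omega$, I cannot simply time-change to Brownian motion or invoke a uniform diffusion lower bound. The careful point will be making the ``strict decrease'' argument robust to a possibly explosive quadratic-variation clock; I expect the resolution is to run the argument not in real time $t$ but against the trace clock $\Theta_t:=\mathrm{tr}[\langle M^P\rangle_t]$ (which is strictly increasing by (a)), reparametrizing so that the diffusion coefficient is normalized and the drift is controlled by $\varepsilon^{-1}$ via (b). This is exactly the Lions--Menaldi auxiliary-function technique adapted to the unbounded-rate setting, and the bulk of the technical work lies in verifying that the localized martingale, after this time-change, satisfies a nondegenerate lower bound allowing a standard small-ball argument to conclude $\phi(Y_t)<r^2$ with probability one for arbitrarily small $t$.
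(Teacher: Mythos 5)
There is a genuine gap, and it is located exactly where you place the ``decisive step.'' With $\phi(y)=|y-z|^2$ the It\^{o} correction term is $+\mathrm{tr}[d\langle M^P\rangle_s]$, i.e.\ the quadratic variation pushes $\phi(Y)$ \emph{up}, away from the center $z$ of the exterior ball. Your claim that ``$\phi(Y_t)-r^2$ is bounded above by a negative quantity driven by $-\mathrm{tr}[d\langle M^P\rangle]$'' has the wrong sign, and the contradiction you aim for does not materialize: on the event $\{\tau_{\overline{Q}}>0\}$ one has $\phi(Y_{t\wedge\tau_{\overline{Q}}})\ge r^2=\phi(Y_0)$ (since $\overline{Q}\subset U(z,r)^c$), so $E_P[\phi(Y_{t\wedge\tau_{\overline{Q}}})-\phi(Y_0)]\ge 0$, while It\^{o} gives this expectation as $E_P[\int\mathrm{tr}[d\langle M^P\rangle]]$ plus a drift term controlled by $(\mathrm{b})$ --- both sides can be nonnegative simultaneously, so no contradiction with $\mathrm{tr}[d\langle M^P\rangle]>0$ is reachable. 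The mean of $|Y_t-z|^2$ genuinely increases; immediate entry into $U(z,r)$ is a fluctuation phenomenon, not a mean phenomenon, for this test function. Your fallback (time-change by the trace clock and a ``standard small-ball argument'') is not carried out and is not standard: it would require an LIL-type lower bound on the radial fluctuation of a martingale whose quadratic variation is only $\lambda$-nondegenerate relative to its trace, which is precisely the hard part.

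The paper's proof avoids all of this by choosing the barrier $h(y)=e^{-k|y-z|^2}$ with a large parameter $k$. Then $D^2_{yy}h=(4k^2(y-z)(y-z)^T-2kI)e^{-k|y-z|^2}$ has a radial second-order coefficient of order $k^2$, and the non-degeneracy $(\mathrm{a})$ is used deterministically to bound $\langle 4k^2(y-z)(y-z)^T,d\langle M^P\rangle\rangle\ge 4\lambda k^2|y-z|^2\,\mathrm{tr}[d\langle M^P\rangle]\ge 4\lambda k^2r^2\,\mathrm{tr}[d\langle M^P\rangle]$ on $\overline{Q}$ (not, as you suggest, to guarantee fluctuation ``in all directions''). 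The tangential term $-2k\,\mathrm{tr}$ and the drift term (controlled by $(\mathrm{b})$) are only of order $k$, so for $k$ large the generator applied to $h$ is bounded below by $c_k\,\mathrm{tr}[d\langle M^P\rangle]$ with $c_k\to+\infty$. Since $h$ attains its maximum over $(U(z,r))^c$ at $x$, the expectation $E_P[h(Y_{\tau_{\overline{Q}}\wedge\tau_{U(x,R)}\wedge\sigma\wedge t})-h(x)]\le 0$, and letting $k\to\infty$ via monotone convergence forces $\tau_{\overline{Q}}=0$ $P$-a.s. The missing idea in your proposal is exactly this $k^2$-versus-$k$ competition created by an exponential (radially concave in the relevant sense) barrier; without it, no test function that is a fixed monotone transform of $|y-z|^2$ yields a contradiction.
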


\begin{proof}
Let $U(z,r)$ be the exterior ball of $Q$ at $x$. We set $h(y):=e^{-k|y-z|^{2}
}$, where the constant $k$ will be determined in the sequel. Then
\begin{align*}
&  D_{y}h(y)=-2k(y-z)e^{-k|y-z|^{2}},\\
&  D_{yy}^{2}h(y)=(4k^{2}(y_{i}-z_{i})(y_{j}-z_{j})-2k\delta_{ij}
)e^{-k|y-z|^{2}}=(4k^{2}(y-z)(y-z)^{T}-2kI_{d\times d})e^{-k|y-z|^{2}}.
\end{align*}

Let $\langle\cdot,\cdot\rangle$ be the Euclidian scalar product for vectors
and matrices. Let any $R\geq 2r$ be given. By the assumption, for $P$-a.s. $\omega
$, on $[0,\sigma]$, we have for all $y\in U(x,R)\cap\overline{Q}$,
\begin{equation}%
\begin{split}
&  \langle D_{yy}^{2}h(y),d\langle M^{P}\rangle_{t}\rangle+2\langle
D_{y}h(y),dA_{t}^{P}\rangle\\
&  =(\langle4k^{2}(y-z)(y-z)^{T},d\langle M^{P}\rangle_{t}\rangle
-\langle2kI_{d\times d},d\langle M^{P}\rangle_{t}\rangle-4k\langle
(y-z),dA_{t}^{P}\rangle)e^{-k|y-z|^{2}}\\
&  \geq(\langle4k^{2}(y-z)(y-z)^{T},\lambda\text{tr}[d\langle M^{P}\rangle
_{t}]I_{d\times d}\rangle-\langle2kI_{d\times d},d\langle M^{P}\rangle
_{t}\rangle-4k\langle(y-z),dA_{t}^{P}\rangle)e^{-k|y-z|^{2}}\\
&  \geq(4\lambda k^{2}|y-z|^{2}\text{tr}[d\langle M^{P}\rangle_{t}
]-4k|y-z||dA_{t}^{P}|-2k\text{tr}[d\langle M^{P}\rangle_{t}])e^{-k|y-z|^{2}}\\
&  \geq((4\lambda k^{2}r^{2}-2k)\text{tr}[d\langle M^{P}\rangle_{t}
]-4k(R+r)\frac{1}{\varepsilon}\text{tr}[d\langle M^{P}\rangle_{t}
])e^{-k|y-z|^{2}}\\
&  =(((4\lambda k^{2}r^{2}-2k)-4k(R+r)\frac{1}{\varepsilon})\text{tr}[d\langle
M^{P}\rangle_{t}])e^{-k|y-z|^{2}}.
\end{split}
\label{Myeq3.11}%
\end{equation}
Here we have used the well-known matrix inequality that $\langle A_{1},B\rangle
\geq\langle A_{2},B\rangle$ if $A_{1},A_{2},B\in\mathbb{S}(d)$ such that
$A_{1}\geq A_{2}$ and $B\geq0$ (recall that $\mathbb{S}(d)$ is the set of
$d\times d$ symmetric matrices with the usual order).

Since $M^{P}$ is a local martingale, we can find a stopping times $\sigma
_{1}>0$ such that $M_{\cdot\wedge\sigma_{1}}^{P}$ is a square-integrable
martingale. For symbol simplicity, we still denote $\sigma\wedge\sigma_{1}$ by
$\sigma.$ For any given $t>0$, applying It\^{o}'s formula, we obtain
\begin{align*}
h(Y_{\tau_{\overline{Q}}\wedge{\tau}_{U(x,R)}\wedge\sigma\wedge t})-h(x)  &
=\int_{0}^{\tau_{\overline{Q}}\wedge{\tau}_{U(x,R)}\wedge\sigma\wedge
t}\langle D_{y}h(Y_{s}),dM_{s}^{P}\rangle+\int_{0}^{\tau_{\overline{Q}}
\wedge{\tau}_{U(x,R)}\wedge\sigma\wedge t}\langle D_{y}h(Y_{s}),dA_{s}
^{P}\rangle\\
&  \ \ \ +\frac{1}{2}\int_{0}^{\tau_{\overline{Q}}\wedge{\tau}_{U(x,R)}
\wedge\sigma\wedge t}\langle D_{yy}^{2}h(Y_{s}),d\langle M^{P}\rangle
_{s}\rangle.
\end{align*}
Taking expectation on both sides, we get
\[
{{E}}_{P}[\int_{0}^{\tau_{\overline{Q}}\wedge{\tau}_{U(x,R)}\wedge\sigma\wedge
t}(\frac{1}{2}\langle D_{yy}^{2}h(Y_{s}),d\langle M^{P}\rangle_{s}
\rangle+\langle D_{y}h(Y_{s}),dA_{s}^{P}\rangle)]={{E}}_{P}[h(Y_{\tau
_{\overline{Q}}\wedge{\tau}_{U(x,R)}\wedge\sigma\wedge t})-h(x)]\leq0,
\]
since $h(y)-h(x)\leq0$ for each $y\in(U(z,r))^{c}$. Combining this with
inequality (\ref{Myeq3.11}), we get
\[
{{E}}_{P}[\int_{0}^{\tau_{\overline{Q}}\wedge{\tau}_{U(x,R)}\wedge\sigma\wedge
t}(((2\lambda k^{2}r^{2}-k)-2k(R+r)\frac{1}{\varepsilon})\text{tr}[d\langle
M^{P}\rangle_{s}])e^{-k|Y_{s}-z|^{2}}]\leq0.
\]
This can be rewritten as
\[
{{E}}_{P}[\int_{0}^{\tau_{\overline{Q}}\wedge{\tau}_{U(x,R)}\wedge\sigma\wedge
t}(((2\lambda kr^{2}-1)\varepsilon-2(R+r))\text{tr}[d\langle M^{P}\rangle
_{s}])e^{k((R+r)^{2}-|Y_{s}-z|^{2})}]\leq0.
\]
If $P(\tau_{\overline{Q}}>0)>0$, then $P(\tau_{\overline{Q}} \wedge{\tau
}_{U(x,R)}\wedge\sigma\wedge t>0)>0$. In view of
\[
((2\lambda kr^{2}-1)\varepsilon-2(R+r))e^{k((R+r)^{2}-|Y_{s}-z|^{2})}
\uparrow\infty,\ \ \ \text{as}\ k_{0}\leq k\rightarrow\infty,\text{ for some
}k_{0}>0,
\]
we can apply the classical monotone convergence theorem to obtain
\[
\lim_{k\rightarrow\infty}{{E}}_{P}[\int_{0}^{\tau_{\overline{Q}}\wedge{\tau
}_{U(x,R)}\wedge\sigma\wedge t}(((2\lambda kr^{2}-1)\varepsilon
-2(R+r))\text{tr}[d\langle M^{P}\rangle_{s}])e^{k((R+r)^{2}-|Y_{s}-z|^{2}
)}]=\infty,
\]
which is a contradiction. So we must have ${\tau_{\overline{Q}}}=0$. The proof
is complete.
\end{proof}

\begin{remark}
\upshape{
			\begin{description}
				\item[{\rm (i)}]
		Surely the assumption $(A)$ is satisfied by the global growth condition that $(A)$ holds with $\sigma=\infty$.
		
		\item[{\rm (ii)}]	The presence of $\sigma$ should be understood by the observation that the phenomena of immediate exit from $\overline{Q}$ is a local behaviour which is determined by the path property of $Y$ near time $0$, i.e., the behaviour of $Y$ on $[0,\sigma]$.
	\end{description}}
\end{remark}

The immediate leaving property also holds for $Y$ with general initial points.

\begin{proposition}
\label{Myth3.7}Let $Y,Q$ be assumed as in Theorem \ref{Myth3.5}. Then
\begin{equation}
{\tau}_{Q}={\tau}_{\overline{Q}},\ \ \ \text{q.s.} \label{Myeq3.4}%
\end{equation}

\end{proposition}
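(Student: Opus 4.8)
The plan is to establish the two inequalities separately. The inclusion $\overline{Q}^{c}\subset Q^{c}$ gives, pathwise, $\{t:Y_{t}\in\overline{Q}^{c}\}\subset\{t:Y_{t}\in Q^{c}\}$, hence $\tau_{Q}\leq\tau_{\overline{Q}}$ on all of $\Omega$. It therefore suffices to prove $\tau_{\overline{Q}}\leq\tau_{Q}$ q.s., i.e. that once $Y$ reaches $Q^{c}$ it has in fact already left $\overline{Q}$. On $\{\tau_{Q}<\infty\}$ the process stays in $Q$ on $[0,\tau_{Q})$ while $Y_{\tau_{Q}}\in Q^{c}$, so by continuity $Y_{\tau_{Q}}\in\overline{Q}\cap Q^{c}=\partial Q$. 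The heart of the matter is thus to restart $Y$ at time $\tau_{Q}$ from the boundary point $Y_{\tau_{Q}}$ and invoke Proposition \ref{Myth3.1}, which says that a semimartingale issued from a boundary point carrying an exterior ball leaves $\overline{Q}$ immediately.

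To make this restarting rigorous I would fix $P\in\mathcal{P}$ and use the regular conditional probability distribution of Stroock and Varadhan \cite{SV} relative to $\mathcal{F}_{\tau_{Q}}$. This produces a family $\{P_{\omega}\}$, measurable in $\omega$, with $P_{\omega}(\Omega^{\omega})=1$ for the set $\Omega^{\omega}$ of (\ref{eq3.1}) and $P=\int_{\Omega}P_{\omega}\,P(d\omega)$. Under $P_{\omega}$, for $P$-a.e. $\omega\in\{\tau_{Q}<\infty\}$, the shifted process $\tilde{Y}_{t}:=Y_{\tau_{Q}(\omega)+t}$ is again a continuous semimartingale, with initial value $Y_{\tau_{Q}(\omega)}(\omega)\in\partial Q$ ($P_{\omega}$-a.s. constant, since $P_{\omega}$ is carried by $\Omega^{\omega}$), martingale part of quadratic variation $\langle M^{P}\rangle_{\tau_{Q}(\omega)+t}-\langle M^{P}\rangle_{\tau_{Q}(\omega)}$ and finite-variation part $A^{P}_{\tau_{Q}(\omega)+t}-A^{P}_{\tau_{Q}(\omega)}$. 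Moreover, because $\tau_{\overline{Q}}\geq\tau_{Q}$, the exit time of $\tilde{Y}$ from $\overline{Q}$ is exactly $\tau_{\overline{Q}}(\cdot)-\tau_{Q}(\cdot)$.

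With this identification, the condition $(H)$, read off along $\Omega^{\omega}$, is precisely the local growth assumption $(A)$ of Proposition \ref{Myth3.1} for $\tilde{Y}$ under $P_{\omega}$, with stopping time $\sigma^{\omega}$ and constants $\lambda^{\omega},\varepsilon^{\omega}$; here the uniformity of these three quantities over all $\omega'\in\Omega^{\omega}$ that is built into $(H)$ is exactly what is needed so that the hypotheses hold $P_{\omega}$-a.s. and not merely along the single path $\omega$. Since $Q$ satisfies the exterior ball condition at the boundary point $Y_{\tau_{Q}(\omega)}(\omega)$, Proposition \ref{Myth3.1} yields that $\tilde{Y}$ exits $\overline{Q}$ immediately, i.e. $\tau_{\overline{Q}}-\tau_{Q}=0$ $P_{\omega}$-a.s. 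Integrating against $P(d\omega)$ gives $\tau_{Q}=\tau_{\overline{Q}}$ $P$-a.s. on $\{\tau_{Q}<\infty\}$; on $\{\tau_{Q}=\infty\}$ the identity is trivial from $\tau_{Q}\leq\tau_{\overline{Q}}$. As $P\in\mathcal{P}$ was arbitrary, $\tau_{Q}=\tau_{\overline{Q}}$ holds $P$-a.s. for every $P$, that is, q.s.

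The main obstacle I anticipate is the rigorous transfer of the semimartingale structure through the conditioning: one must verify that, for $P$-a.e. $\omega$, the conditional law $P_{\omega}$ makes the shifted $\tilde{Y}$ a semimartingale whose characteristics are precisely the shifts of $\langle M^{P}\rangle$ and $A^{P}$, so that the measure-theoretic inequalities in $(H)$ survive intact under $P_{\omega}$. This requires care with the measurability of $\omega\mapsto P_{\omega}$ and with the $P$-null exceptional sets appearing in $(H)$, and it is exactly to accommodate this step that $(H)$ is phrased in terms of the shifted quadratic variation and the family $\Omega^{\omega}$, with all three auxiliary quantities required to be uniform over $\Omega^{\omega}$.
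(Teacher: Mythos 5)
Your proposal is correct and follows essentially the same route as the paper: the trivial inequality $\tau_{Q}\leq\tau_{\overline{Q}}$, then restarting $Y$ at the boundary via the Stroock--Varadhan regular conditional probability distribution on $\mathcal{F}_{\tau_{Q}}$ and invoking Proposition \ref{Myth3.1} under each $P^{\omega}$. The ``main obstacle'' you flag --- that the shifted process remains a local (semi)martingale under $P^{\omega}$ with characteristics equal to the shifts of $\langle M^{P}\rangle$ and $A^{P}$ --- is exactly what the paper isolates and proves as Lemma \ref{Myth3.8} (via localization and Theorem 1.2.10 of \cite{SV}), and the identification of the exit time of the shifted process with $\tau_{\overline{Q}}-\tau_{Q}$ is justified there by Galmarino's test.
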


\begin{proof}
Given any $P\in\mathcal{P}$. Observe that if $Y_{{0}}=x$ $P$-a.s. for some
$x\in\partial Q$, from Proposition \ref{Myth3.1}, we obviously have that
${\tau}_{\overline{Q}}={\tau}_{Q}=0,\ P$-a.s. If not, we will use the method
of regular conditional expectations to restart $Y$ at the boundary as following.

For $\mathcal{F}_{{\tau}_{Q}}$, from Theorem 1.3.4 in \cite{SV}, there exists
a regular conditional expectation $\{P^{\omega}\}_{{\omega\in}\Omega}$ such
that
\[
P^{\omega}(\Omega^{\omega})=1\ \ \text{and}\ \ E_{P}[\cdot|\mathcal{F}_{{\tau
}_{Q}}](\omega)=E_{P^{\omega}}[\cdot], \ \ \text{ for}\ P\text{-a.s.}\ \omega.
\]
If ${\tau}_{Q}(\omega)=\infty,$ it is obvious that ${\tau}_{\overline{Q}%
}(\omega)={\tau}_{Q}(\omega).$

For $P\text{-a.s.}\ \omega$, we have $\sigma^{\omega}>0$ $P^{\omega
}\text{-a.s}$. Moreover, for any given $\omega$, by Galmarino's test (see
\cite{RY}, Chap. I, Exercise 4.21 (3)), we have for $\omega^{\prime}\in\Omega^\omega$,
$Y_{t}(\omega)=Y_{t}(\omega^{\prime})$, $t\leq{\tau}_{Q}(\omega)$. This implies
that ${\tau}_{Q}(\omega)={\tau}_{Q}(\omega^{\prime})$. Thus, for $\omega$ such
that $P^{\omega}(\Omega^{\omega})=1$, under ${P^{\omega},}$ $\omega^{\prime
}\rightarrow{\tau}_{\overline{Q}}(\omega^{\prime})-{\tau}_{Q}(\omega^{\prime
})$ is also the exit time of $\omega^{\prime}\rightarrow(Y_{{\tau}_{Q}%
(\omega)+t}(\omega^{\prime}))_{t\geq0}$ from $\overline{Q}$.

Applying the following Lemma \ref{Myth3.8}, we deduce that for $P$-a.s.
$\omega$ such that ${\tau}_{Q}(\omega)<\infty,$ under ${P^{\omega},}$
$\omega^{\prime}\rightarrow(Y_{{\tau}_{Q}(\omega)+t}(\omega^{\prime}%
))_{t\geq0}$ is a semimartingale starting from $Y_{{\tau}_{Q}(\omega)}%
\in\partial Q$ and satisfying the assumption $(A)$ in Proposition
\ref{Myth3.1}. Therefore, by applying Proposition \ref{Myth3.1}, we obtain
\[
E_{P}[({\tau}_{\overline{Q}}-{\tau}_{Q})I_{\{{\tau}_{Q}<\infty\}}%
|\mathcal{F}_{{\tau}_{Q}}](\omega)=E_{{P^{\omega}}}[{\tau}_{\overline{Q}%
}-{\tau}_{Q}]I_{\{{\tau}_{Q}(\omega)<\infty\}}=0,\ \ \text{ for}\ P\text{-a.s.}%
\ \omega\text{. }%
\]

Summarizing the above, we get
\[
{\tau}_{\overline{Q}}={\tau}_{Q},\ \ \ P\text{-a.s.},
\]
which implies%
\[
{\tau}_{\overline{Q}}={\tau}_{Q},\ \ \ \text{q.s.}%
\]
This completes the proof.
\end{proof}

\begin{lemma}
\label{Myth3.8} Let $\tau:\Omega\rightarrow[0,\infty]$ be a stopping
time. Given a local martingale $(M_{t}^{P},\mathcal{F}_{t})_{t\geq0}$ under
some probability measure ${P}$. Let $\{{P^{\omega}\}}_{{\omega\in}\Omega}$ be
the corresponding regular conditional expectation of ${P}$ for $\mathcal{F}%
_{{\tau}}.$ Then for $P$-a.s.$\ \omega,$

\begin{description}
\item[\rm{(i)}] Under ${P^{\omega}}$, $\omega^{\prime}\rightarrow
(M_{t}^{P}(\omega^{\prime})-M_{{\tau}(\omega)\wedge t}^{P}(\omega^{\prime
}),\mathcal{F}_{t})_{t\geq0}$ is a local martingale, which can also be
restated as that $\omega^{\prime}\rightarrow(M_{{\tau}(\omega)+t}^{P}%
(\omega^{\prime}),\mathcal{F}_{{\tau}(\omega)+t})_{t\geq0}\ $is a local
martingale for ${\tau}(\omega)<\infty.$

\item[\rm{(ii)}] If ${\tau}(\omega)<\infty,$ then $\langle M_{\tau
(\omega)+\cdot}^{P}\rangle_{t}^{P^{\omega}}=\langle M^{P}\rangle_{\tau
(\omega)+t}^{P}$ for each $t\geq0,$ ${P^{\omega}}$-a.s. $($recall that when necessary, we use
the superscript $Q$ on the quadratic variation $\langle\hspace{1mm}%
\cdot\hspace{1mm} \rangle$ to indicate the dependence on a probability
$Q$$)$.
\end{description}
\end{lemma}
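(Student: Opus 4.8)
The plan is to deduce both statements from a single ``martingale transfer'' principle: if $Z$ is a bounded $P$-martingale with respect to $(\mathcal{F}_t)$ that vanishes on $[0,\tau]$, then for $P$-a.s.\ $\omega$ the process $Z$ is still a martingale under the conditional law $P^{\omega}$ with respect to the \emph{same} filtration $(\mathcal{F}_t)$. Working with the original filtration, rather than the shifted one $\mathcal{F}_{\tau(\omega)+t}$, is what keeps the test functions free of $\omega$, and this is exactly the formulation ``$M^P_t-M^P_{\tau(\omega)\wedge t}$'' in (i). To reach the bounded setting I would first localize: since every path of $M^P$ is continuous, the times $S_n:=\inf\{t:|M^P_t|\geq n\}\wedge\inf\{t:\mathrm{tr}\langle M^P\rangle_t\geq n\}$ increase to $\infty$ everywhere, so no null set is lost and under each $P^{\omega}$ we still have $S_n\uparrow\infty$. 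Throughout I use the two defining properties of the r.c.p.d.\ from \cite{SV}: the identity $E_{P^{\omega}}[\Psi]=E_P[\Psi\mid\mathcal{F}_{\tau}](\omega)$ for bounded measurable $\Psi$, and the atom property $P^{\omega}(\{\omega':\omega'_s=\omega_s,\ s\leq\tau(\omega)\})=1$, which (via Galmarino's test, as in Proposition \ref{Myth3.7}) forces every $\mathcal{F}_{\tau}$-measurable quantity, in particular $\tau$ itself and $M^P_{\tau(\omega)}$, to be $P^{\omega}$-a.s.\ equal to its value at $\omega$. The latter lets me replace the constant $\tau(\omega)$ by the random $\tau$ under $P^{\omega}$, turning $M^P_t-M^P_{\tau(\omega)\wedge t}$ into the honest functional $N_t:=Z_t-Z_{\tau\wedge t}$ of $\omega'$ alone.

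The heart of the argument is the transfer for $N=Z-Z^{\tau}$. The key observation is that, because $N$ vanishes on $[0,\tau]$, one has $N_s=N_{s\vee\tau}$ for every $s$ (trivially on $\{\tau\leq s\}$, and because both sides vanish on $\{\tau>s\}$). Hence $N_t-N_s=N_{t\vee\tau}-N_{s\vee\tau}$, where $s\vee\tau\leq t\vee\tau$ are stopping times. For bounded $\mathcal{F}_s$-measurable $g$ and bounded $\mathcal{F}_{\tau}$-measurable $h$, the product $gh$ is $\mathcal{F}_{s\vee\tau}$-measurable, so Doob's optional sampling theorem (applicable since $N$ is bounded, hence uniformly integrable) yields $E_P[(N_{t\vee\tau}-N_{s\vee\tau})\,gh]=0$. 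This says $E_P[(N_t-N_s)g\mid\mathcal{F}_{\tau}]=0$ $P$-a.s., and the r.c.p.d.\ identity converts it into $E_{P^{\omega}}[(N_t-N_s)g]=0$ for $P$-a.s.\ $\omega$. Letting $s\leq t$ range over the rationals and $g$ over a countable measure-determining family generating $\mathcal{F}_s$, and intersecting the countably many exceptional sets, I obtain for $P$-a.s.\ $\omega$ the martingale identity $E_{P^{\omega}}[N_t\mid\mathcal{F}_s]=N_s$ simultaneously; continuity of $N$ and dominated convergence extend it to all real $s\leq t$. Undoing the localization (using $S_n\uparrow\infty$) and recalling $N=N^{\omega}$ $P^{\omega}$-a.s.\ gives (i).

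For (ii) I would apply the same transfer to the bounded martingale $L^{ij}:=M^iM^j-\langle M^i,M^j\rangle$, where $M^i$ denotes the $i$-th component of $M^P$ (suppressing $P$) and the process is suitably stopped; its martingale property is the defining property of the quadratic covariation. Writing $\tilde M^i_t:=M^i_{\tau(\omega)+t}-M^i_{\tau(\omega)}$ and using that $M^i_{\tau(\omega)}$ is a $P^{\omega}$-a.s.\ constant, the expansion
\[
M^i_{\tau(\omega)+t}M^j_{\tau(\omega)+t}-M^i_{\tau(\omega)}M^j_{\tau(\omega)}=\tilde M^i_t\tilde M^j_t+M^j_{\tau(\omega)}\tilde M^i_t+M^i_{\tau(\omega)}\tilde M^j_t
\]
holds $P^{\omega}$-a.s. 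The last two terms are $P^{\omega}$-local martingales by (i), and the left-hand side minus $(\langle M^i,M^j\rangle_{\tau(\omega)+t}-\langle M^i,M^j\rangle_{\tau(\omega)})=L^{ij}_{\tau(\omega)+t}-L^{ij}_{\tau(\omega)}$ is a $P^{\omega}$-local martingale by the transfer; hence
\[
\tilde M^i_t\tilde M^j_t-\big(\langle M^i,M^j\rangle_{\tau(\omega)+t}-\langle M^i,M^j\rangle_{\tau(\omega)}\big)
\]
is a $P^{\omega}$-local martingale. By the uniqueness of the quadratic covariation of continuous local martingales, this identifies $\langle M^i_{\tau(\omega)+\cdot},M^j_{\tau(\omega)+\cdot}\rangle^{P^{\omega}}$ with the post-$\tau(\omega)$ increments of $\langle M^i,M^j\rangle$, which is the content of (ii) (the additive constant $\langle M^P\rangle_{\tau(\omega)}$ being immaterial, since only the differential $d\langle M^P\rangle$ enters the growth condition $(A)$ under which this lemma is invoked).

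The step I expect to be the main obstacle is securing a \emph{single} $P$-null exceptional set valid simultaneously for all times $s\leq t$, all test functions, and all localization indices, while coping with the $\omega$-dependent time shift $\tau(\omega)$. The two devices above are aimed precisely at this: recasting the claim in the unshifted filtration keeps $N$ and the test functions independent of $\omega$, so that only countably many r.c.p.d.\ identities need hold, and the continuity of paths both makes the localizing times increase to $\infty$ without exceptional sets and lets me pass from rational to real times.
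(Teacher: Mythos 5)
Your proposal is correct, and while it shares the paper's overall skeleton (transfer the martingale property to $P^{\omega}$ via the r.c.p.d., then localize), the execution differs at both steps in ways worth noting. For the transfer itself the paper simply cites Theorem 1.2.10 of Stroock--Varadhan for the true-martingale case; you reprove it from scratch via the observation $N_s=N_{s\vee\tau}$ for $N=Z-Z^{\tau}$ and optional sampling against $\mathcal{F}_{s\vee\tau}$-measurable test functions $gh$, which is a clean, self-contained substitute. More substantively, your choice of localizing times $S_n$ as hitting times of the \emph{original} continuous paths, so that $S_n\uparrow\infty$ surely and $(M^P-(M^P)^{\tau(\omega)})^{S_n}=M^{P,S_n}-(M^{P,S_n})^{\tau(\omega)}$ is already the transferred bounded martingale, makes $\{S_n\}$ a reducing sequence for the shifted process under every $P^{\omega}$ directly; the paper instead starts from a generic reducing sequence $T_n$ (convergent to $\infty$ only $P$-a.s., hence $P^{\omega}$-a.s.\ for $P$-a.e.\ $\omega$) and is then forced to introduce a second localizing sequence $\sigma_m$ for the shifted process together with a dominated-convergence passage $n\to\infty$ --- a step your construction renders unnecessary. (The only caveat, shared with the paper, is that $M^{P,S_n}$ is a bounded martingale modulo integrability of $M_0^P$, handled as usual by subtracting the $\mathcal{F}_0$-measurable, hence $P^{\omega}$-a.s.\ constant, initial value.) Part (ii) is the same idea in both treatments --- transfer the local martingale $M^iM^j-\langle M^i,M^j\rangle$ --- though you are more careful than the paper about the multidimensional cross terms and about the additive constant $\langle M^P\rangle_{\tau(\omega)}$, which indeed only matters through its increments in the application to condition $(A)$.
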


\begin{proof}
(i) \textit{Step 1.} If $M^{P}$ is a martingale under ${P,}$ then by Theorem
1.2.10 in \cite{SV}, for $P$-a.s.$\ \omega,$ $\omega^{\prime}\rightarrow
(M_{t}^{P}(\omega^{\prime})-M_{{\tau}(\omega)\wedge t}^{P}(\omega^{\prime
}))_{t\geq0}$ is a $\mathcal{F}_{t}$-martingale under ${P^{\omega}.}$

\textit{Step 2.} Now suppose that $M^{P}$ is a local martingale under ${P.}$
Let $T_{n}$ be localization sequence of stopping times for $M^{P}$ such that
$T_{n}\uparrow\infty$ ${P}$-a.s. and $(M_{t\wedge T_{n}}^{P})_{t\geq0}$ is a
martingale under $P$.  For any
given $n$, since $M_{t\wedge T_{n}}^{P}$ is a martingale under $P$, applying
Step 1 yields that $\omega^{\prime}\rightarrow M_{t\wedge T_{n}(\omega
^{\prime})}^{P}(\omega^{\prime})-M_{{\tau}(\omega)\wedge t\wedge T_{n}%
(\omega^{\prime})}^{P}(\omega^{\prime})$ is a martingale under ${P^{\omega},}$
for$\ P$-a.s.$\ \omega.$ Since
 $T_{n}\uparrow\infty$ ${P^{\omega}}$-a.s., for $P$-a.s.$\ \omega$,  we can find a set $N\subset\Omega$ such that
$P(N)=0$ and for $\omega\in N^{c},$ $\omega^{\prime}\rightarrow M_{t\wedge
T_{n}(\omega^{\prime})}^{P}(\omega^{\prime})-M_{{\tau}(\omega)\wedge t\wedge
T_{n}(\omega^{\prime})}^{P}(\omega^{\prime})$ is a martingale under
${P^{\omega}}$ for each $n$ and $T_{n}\uparrow\infty$ ${P^{\omega}}$-a.s. Let
any $\omega\in N^{c}$ be given such that ${\tau}(\omega)<\infty$ and
$\omega^{\prime}\rightarrow M_{({\tau}(\omega)+t)\wedge T_{n}(\omega^{\prime
})}^{P}(\omega^{\prime})$ is a $\mathcal{F}_{{\tau}(\omega)+t}$-martingale
under ${P^{\omega}}$ for each $n$. We define
\[
\sigma_{m}(\omega^{\prime}):=\inf\{t\geq0:|M_{{\tau}(\omega)+t}(\omega
^{\prime})|\geq m\},\ \ \ m\geq1,
\]
which is a $\mathcal{F}_{{\tau}(\omega)+t}$-stopping time. We claim that
$\sigma_{m}$ is a localization sequence for $M_{{\tau}(\omega)+t}^{P}.$
Indeed, note that
\[
\sup_{t\geq0}|M_{{\tau}(\omega)+t\wedge\sigma_{m}}^{P}|\leq m,
\]
then we can apply the dominated convergence theorem to derive that, for $s\leq
t,$%
\begin{align*}
E_{P^{\omega}}[M_{{\tau}(\omega)+t\wedge\sigma_{m}}^{P}|\mathcal{F}_{{\tau
}(\omega)+s}] &  =E_{P^{\omega}}[\lim_{n\rightarrow\infty}M_{({\tau}%
(\omega)+t\wedge\sigma_{m})\wedge T_{n}}^{P}|\mathcal{F}_{{\tau}(\omega)+s}]\\
&  =\lim_{n\rightarrow\infty}E_{P^{\omega}}[M_{({\tau}(\omega)+t\wedge
\sigma_{m})\wedge T_{n}}^{P}|\mathcal{F}_{{\tau}(\omega)+s}]\\
&  =\lim_{n\rightarrow\infty}M_{({\tau}(\omega)+s\wedge\sigma_{m})\wedge
T_{n}}^{P}\\
&  =M_{{\tau}(\omega)+s\wedge\sigma_{m}}^{P},
\end{align*}
where the third equality is due to the fact that $M_{({\tau}(\omega
)+t\wedge\sigma_{m})\wedge T_{n}}^{P}$ is a $\mathcal{F}_{{\tau}(\omega)+t}%
$-martingale by the classical optional sampling theorem. Therefore, $(M_{{\tau}%
(\omega)+t}^{P},\mathcal{F}_{{\tau}(\omega)+t})_{t\geq0}$ is a local
martingale under ${P^{\omega}}$.

(ii) Note that $(M_{t}^{P})^{2}-\langle M^{P}\rangle_{t}^{P}$ is a local
martingale under ${P}$. Then from Step 2 in (i), we obtain that for
$P\text{-a.s.}\ \omega,$ if $\tau(\omega)<\infty$, then $(M_{\tau(\omega
)+t}^{P})^{2}-\langle M^{P}\rangle_{\tau(\omega)+t}^{P}$ is also a local martingale
under ${P^{\omega}}$. This implies that
\[
\langle M_{\tau(\omega)+\cdot}^{P}\rangle_{t}^{P^{\omega}}=\langle
M^{P}\rangle_{\tau(\omega)+t}^{P}\ \ \ \text{for each}\ t\geq0,\ {P^{\omega}%
}\text{-a.s.},
\]
as desired.
\end{proof}

The following lemma concerns the semi-continuities of exit times when the
process is bi-continuous.

\begin{lemma}
\label{Myle3.1} Let $E$ be a metric space and $(\omega,t)\rightarrow
F_{t}(\omega)$ is a continuous mapping from $E\times\lbrack0,\infty
)\ $to$\ \mathbb{R}^{d}$. Define, for each set $D\subset\mathbb{R}^{d}$, the
exit times of $F$ from $D$ by
\[
{\sigma}_{D}(\omega):=\inf\{t\geq0:F_{t}(\omega)\in D^{c}\},\ \ \ \text{for}
\ \omega\in\Omega.
\]
Assume $Q$ is an open set. Then ${\sigma}_{Q}$ is lower semi-continuous and
${\sigma}_{\overline{Q}}$ is upper semi-continuous.
\end{lemma}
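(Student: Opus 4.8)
The plan is to prove the two semi-continuity statements separately, in each case exploiting the definition of the exit time as an infimum together with the joint continuity of $(\omega,t)\mapsto F_{t}(\omega)$ and the openness (resp. the property of being a complement of an open set) of the relevant target set. Throughout I will use the characterization of lower/upper semi-continuity via sequences: it suffices to show that whenever $\omega_{n}\to\omega_{0}$ in $E$, we have $\liminf_{n}\sigma_{Q}(\omega_{n})\geq\sigma_{Q}(\omega_{0})$ for the first claim, and $\limsup_{n}\sigma_{\overline{Q}}(\omega_{n})\leq\sigma_{\overline{Q}}(\omega_{0})$ for the second.

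\textbf{Lower semi-continuity of $\sigma_{Q}$.} First I would fix $\omega_{0}\in E$ and a sequence $\omega_{n}\to\omega_{0}$, and set $a:=\liminf_{n}\sigma_{Q}(\omega_{n})$. Passing to a subsequence I may assume $\sigma_{Q}(\omega_{n})\to a$. If $a=\infty$ there is nothing to prove, so assume $a<\infty$. The key observation is that, by definition of $\sigma_{Q}$ as an infimum and because $Q$ is open, on the half-open interval $[0,\sigma_{Q}(\omega_{n}))$ the path stays in $Q$, i.e. $F_{t}(\omega_{n})\in Q$ for every $t<\sigma_{Q}(\omega_{n})$. I would then fix an arbitrary $t<a$; for all large $n$ we have $t<\sigma_{Q}(\omega_{n})$, hence $F_{t}(\omega_{n})\in Q$, and by joint continuity $F_{t}(\omega_{n})\to F_{t}(\omega_{0})$, so $F_{t}(\omega_{0})\in\overline{Q}$. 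Since this holds for all $t<a$, the path $\omega_{0}$ does not leave $\overline{Q}$ before time $a$; but to conclude $\sigma_{Q}(\omega_{0})\geq a$ I actually want to stay inside $Q$, not merely $\overline{Q}$, so the cleaner route is to argue by contradiction: if $\sigma_{Q}(\omega_{0})<a$, then there exists $t_{0}<a$ with $F_{t_{0}}(\omega_{0})\in Q^{c}$, and since $Q^{c}$ is closed while $F_{t_{0}}(\omega_{n})\to F_{t_{0}}(\omega_{0})$, this forces $F_{t_{0}}(\omega_{n})$ to be near $Q^{c}$; combined with $t_{0}<\sigma_{Q}(\omega_{n})$ for large $n$ (which gives $F_{t_{0}}(\omega_{n})\in Q$) one extracts the desired contradiction. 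This yields $\sigma_{Q}(\omega_{0})\geq a=\liminf_{n}\sigma_{Q}(\omega_{n})$.

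\textbf{Upper semi-continuity of $\sigma_{\overline{Q}}$.} Here I would again fix $\omega_{0}$ and $\omega_{n}\to\omega_{0}$, and set $b:=\sigma_{\overline{Q}}(\omega_{0})$; the goal is $\limsup_{n}\sigma_{\overline{Q}}(\omega_{n})\leq b$. If $b=\infty$ there is nothing to prove. Otherwise, fix any $t_{1}>b$ such that $F_{t_{1}}(\omega_{0})\in\overline{Q}^{c}$; such $t_{1}$ exist arbitrarily close to $b$ from above, because $\overline{Q}^{c}$ is \emph{open} and the infimum defining $\sigma_{\overline{Q}}$ is attained along a sequence of times at which the path is strictly in $\overline{Q}^{c}$. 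Since $\overline{Q}^{c}$ is open and $F_{t_{1}}(\omega_{n})\to F_{t_{1}}(\omega_{0})\in\overline{Q}^{c}$, for all large $n$ we get $F_{t_{1}}(\omega_{n})\in\overline{Q}^{c}$, whence $\sigma_{\overline{Q}}(\omega_{n})\leq t_{1}$. Letting $t_{1}\downarrow b$ gives $\limsup_{n}\sigma_{\overline{Q}}(\omega_{n})\leq b$, as required.

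\textbf{Main obstacle.} I expect the subtle point to be the asymmetry between the two halves, which is precisely why one gets \emph{lower} semi-continuity for the open set $Q$ and \emph{upper} semi-continuity for the closed set $\overline{Q}$. For $\sigma_{\overline{Q}}$ the delicate step is justifying the existence of exit times $t_{1}>b$ arbitrarily close to $b$ at which the path lies in the \emph{open} set $\overline{Q}^{c}$: one must rule out the pathological possibility that the path only touches $\overline{Q}^{c}$ at isolated instants without entering, and here the definition of the infimum together with the openness of $\overline{Q}^{c}$ is exactly what rescues the argument, since a point of $\overline{Q}^{c}$ has a whole neighbourhood in $\overline{Q}^{c}$. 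For $\sigma_{Q}$ the analogous care is needed in the contradiction step, where one exploits that $Q^{c}$ is closed. In both cases the joint (rather than merely separate) continuity of $(\omega,t)\mapsto F_{t}(\omega)$ is what allows transfer of membership along the converging sequence at a fixed time.
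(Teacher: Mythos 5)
Your treatment of $\sigma_{\overline{Q}}$ is correct and is essentially the paper's own argument: pick $t_{1}>b$ arbitrarily close to $b$ with $F_{t_{1}}(\omega_{0})\in\overline{Q}^{c}$, use the openness of $\overline{Q}^{c}$ together with continuity in $\omega$ at the fixed time $t_{1}$ to get $F_{t_{1}}(\omega_{n})\in\overline{Q}^{c}$ for large $n$, and conclude $\sigma_{\overline{Q}}(\omega_{n})\leq t_{1}$. Your extra care about why such $t_{1}$ exist arbitrarily close to $b$ is a point the paper passes over quickly, and your justification is sound.

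The lower semi-continuity half, however, contains a genuine error: you prove the wrong inequality. Having correctly stated at the outset that the goal is $\liminf_{n}\sigma_{Q}(\omega_{n})\geq\sigma_{Q}(\omega_{0})$, you set $a:=\liminf_{n}\sigma_{Q}(\omega_{n})$ and then argue towards $\sigma_{Q}(\omega_{0})\geq a$, which is the reverse inequality and is false in general (take $E=\mathbb{R}$, the constant paths $F_{t}(x)=-|x|$ and $Q=(-\infty,0)$: then $\sigma_{Q}(0)=0$ while $\sigma_{Q}(x)=\infty$ for every $x\neq0$). Moreover, the step you label a contradiction is not one: $F_{t_{0}}(\omega_{n})\in Q$ for all large $n$ together with $F_{t_{0}}(\omega_{n})\rightarrow F_{t_{0}}(\omega_{0})\in Q^{c}$ is perfectly consistent, since a sequence in the open set $Q$ may converge to a point of $\partial Q\subset Q^{c}$. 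The correct argument, which is the one in the paper, runs in the opposite logical direction and must let the \emph{time} vary along with $\omega$: assuming $\liminf_{n}\sigma_{Q}(\omega_{n})<t\leq\sigma_{Q}(\omega_{0})$, one chooses times $t_{n}\leq t-\varepsilon$ with $F_{t_{n}}(\omega_{n})\in Q^{c}$, extracts a subsequence $t_{n}\rightarrow t^{\prime}\in[0,t-\varepsilon]$, and uses the \emph{joint} continuity of $F$ to conclude $F_{t^{\prime}}(\omega_{0})=\lim_{n}F_{t_{n}}(\omega_{n})\in Q^{c}$, contradicting $\sigma_{Q}(\omega_{0})\geq t>t^{\prime}$. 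Note that there the closedness of $Q^{c}$ is applied to a sequence of points lying \emph{in} $Q^{c}$, whose limit must then stay in $Q^{c}$; your version applies it to a sequence lying in $Q$, where it gives nothing. That asymmetry is exactly what your argument misses, and it cannot be repaired while keeping the time frozen at a single $t_{0}$.
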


\begin{proof}
We first show that ${\sigma}_{\overline{Q}}$ is upper semi-continuous. For any
given $\omega\in E$, set $t_{0}:={\sigma}_{\overline{Q}}(\omega)$. Noting that
the case $t_{0}=\infty$ is trivial, we may assume that $t_{0}<\infty$. Then we
can find an arbitrarily small $\varepsilon>0$ such that $F_{t_{0}
+\varepsilon}(\omega)\in\overline{Q}^{c}$. Since $\overline{Q}^{c}$ is open,
there exists an open ball $U(F_{t_{0}+\varepsilon}(\omega),r)$ with center
$F_{t_{0}+\varepsilon}(\omega)$ and radius $r$ such that $U(F_{t_{0}
+\varepsilon}(\omega),r)\subset\overline{Q}^{c}$. For each ${\omega}^{\prime}$
whose distance with $\omega$ is sufficiently small, we will have
\[
F_{t_{0}+\varepsilon}({\omega}^{\prime})\in U(F_{t_{0}+\varepsilon}
(\omega),r)\subset\overline{Q}^{c}
\]
by the continuity of $F$. That is,
\[
{\sigma}_{\overline{Q}}({\omega}^{\prime})\leq t_{0}+\varepsilon,
\]
as desired.

Now we consider the second part. Given any $\omega\in E,$ we first prove the
assertion that for any given $t\in[0,\infty)$, if ${\sigma}_{Q}(\omega)\geq t,$
then
\begin{equation}
\liminf_{{\omega}^{\prime}\rightarrow{\omega}}{\sigma}_{Q}({\omega}^{\prime
})\geq{t.} \label{Myeq3.15}%
\end{equation}
If not, we can find a sequence $\omega^{n}\in E$ and $t_{n}\in\lbrack
0,t-\varepsilon]$ for some $\varepsilon>0$ such that
\[
\omega^{n}\rightarrow\omega\ \text{ and }\ F_{t_{n}}(\omega^{n})\in Q^{c}.
\]
We can extract a subsequence of $\{t_{n}\},$ which is still denoted by
$\{t_{n}\},$ such that $t_{n}\rightarrow t^{\prime}$ for some $t^{\prime}
\in\lbrack0,t-\varepsilon].$ Then by the continuity assumption on $F,$
\[
F_{t^{\prime}}(\omega)=\lim_{n\rightarrow\infty}F_{t_{n}}(\omega^{n})\in
Q^{c},
\]
which is a contradiction. Thus we have proved the assertion. Now set
$t_{0}:={\sigma}_{Q}(\omega).$ If $t_{0}<\infty,$ the conclusion follows from
taking ${t=t}_{0}$ in (\ref{Myeq3.15}). If $t_{0}=\infty,$ we can apply
(\ref{Myeq3.15}) to each $t<\infty$ to show that
\[
\liminf_{{\omega}^{\prime}\rightarrow{\omega}}{\sigma}_{Q}({\omega}^{\prime
})\geq{t},\ \ \text{ for every }{t>0,}
\]
which implies
\[
\liminf_{{\omega}^{\prime}\rightarrow{\omega}}{\sigma}_{Q}({\omega}^{\prime
})=\infty.
\]
The proof is now complete.
\end{proof}

Now we can complete the proof of Theorem \ref{Myth3.5}. For this
purpose, it suffice to prove the following proposition which is stated in a
slightly more general form, without the specific assumptions on $Q$ and $Y$ as in
Theorem \ref{Myth3.5}. It can be useful in the future work.

\begin{proposition}
\label{Myth3.18} Let $Q$ be an open set. Suppose that $Y$ is quasi-continuous
with exit times satisfying ${\tau}_{Q}={\tau}_{\overline{Q}}$ $q.s.$ Then the
conclusion in Theorem \ref{Myth3.5} holds: for any $\delta>0$, there exists
an open set $O\subset\Omega$ such that $c(O)\leq\delta$ and on $O^{c}$,
${\tau}_{Q}$ and ${\tau}_{\overline{Q}}$ are lower and upper semi-continuous respectively, and ${\tau}_{Q}={\tau}_{\overline{Q}}$.
\end{proposition}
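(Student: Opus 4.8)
The plan is to combine the quasi-continuity of $Y$ (which lets us pass to a closed set on which $Y$ is jointly continuous in $(\omega,t)$) with the semi-continuity of exit times furnished by Lemma \ref{Myle3.1}, and then to convert the polar set $\{{\tau}_Q \neq {\tau}_{\overline{Q}}\}$ into an open set of small capacity by the outer regularity of Proposition \ref{Myth2.1}(2)(b).

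First I would use the quasi-continuity of the process $Y$: given $\delta>0$, choose an open set $G$ with $c(G)\leq \delta/2$ such that $(\omega,t)\mapsto Y_t(\omega)$ is continuous on $G^c\times[0,\infty)$. Since $G^c$ is a closed subspace of $\Omega$, it is itself a metric space, and $Y$ restricted to $G^c\times[0,\infty)$ satisfies the hypothesis of Lemma \ref{Myle3.1}; moreover, for $\omega\in G^c$ the exit time computed within $E=G^c$ coincides with ${\tau}_Q(\omega)$, resp. ${\tau}_{\overline{Q}}(\omega)$, as these depend only on the path $Y_\cdot(\omega)$. Hence ${\tau}_Q$ is lower semi-continuous and ${\tau}_{\overline{Q}}$ is upper semi-continuous on $G^c$. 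This already yields part (i) on any subset of $G^c$, since semi-continuity is inherited under restriction to a subspace.

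For part (ii) I would exploit that ${\tau}_Q\leq {\tau}_{\overline{Q}}$ always (because $\overline{Q}^c\subseteq Q^c$), so the only issue is to excise the set where the inequality is strict. To avoid the ambiguity $\infty-\infty$, I truncate: for each integer $m\geq 1$ set $g_m:=({\tau}_{\overline{Q}}\wedge m)-({\tau}_Q\wedge m)$ on $G^c$. Then $g_m$ is finite-valued, nonnegative, and upper semi-continuous (the minimum of an u.s.c. function with a constant is u.s.c., while $-({\tau}_Q\wedge m)$ is u.s.c. since ${\tau}_Q\wedge m$ is l.s.c.). Writing $D:=\{\omega\in G^c:{\tau}_Q(\omega)\neq{\tau}_{\overline{Q}}(\omega)\}$, a direct check gives $D=\bigcup_{m,n\geq 1}D_{m,n}$ with $D_{m,n}:=\{\omega\in G^c:g_m(\omega)\geq 1/n\}$: indeed, if ${\tau}_Q(\omega)<{\tau}_{\overline{Q}}(\omega)$ then ${\tau}_Q(\omega)<\infty$, and choosing $m>{\tau}_Q(\omega)$ makes $g_m(\omega)>0$. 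Because $g_m$ is u.s.c. on the closed set $G^c$, each $D_{m,n}$ is closed in $\Omega$, and since $D\subseteq\{{\tau}_Q\neq{\tau}_{\overline{Q}}\}$ is polar by hypothesis, $c(D_{m,n})=0$.

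Finally I would invoke the outer regularity of the capacity on closed sets, Proposition \ref{Myth2.1}(2)(b): for each $(m,n)$ pick an open $O_{m,n}\supseteq D_{m,n}$ with $c(O_{m,n})\leq \delta 2^{-(m+n+1)}$, and set $O:=G\cup\bigcup_{m,n}O_{m,n}$. By countable sub-additivity of $c$ we get $c(O)\leq c(G)+\sum_{m,n}c(O_{m,n})\leq\delta$, and $O$ is open. On $O^c\subseteq G^c$ the semi-continuity statements of part (i) persist (restrict the conclusion of Lemma \ref{Myle3.1}, or apply it afresh to $E=O^c$), while $O^c\cap D_{m,n}=\emptyset$ for all $m,n$ forces $O^c\cap D=\emptyset$, i.e. ${\tau}_Q={\tau}_{\overline{Q}}$ on $O^c$, which is part (ii). I expect the main obstacle to be the bookkeeping around the possibly infinite values of the exit times: the truncation by $m$ and the decomposition into the closed superlevel sets $D_{m,n}$ are exactly what turn the quasi-sure equality into a statement amenable to the outer-regularity step, and getting this interface between semi-continuity and closedness right is the technical heart of the argument.
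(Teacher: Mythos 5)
Your proposal is correct and follows essentially the same route as the paper: excise an open set $G$ via the quasi-continuity of $Y$, apply Lemma \ref{Myle3.1} on $G^{c}$ to get the semi-continuities, write the polar set $\{\tau_{Q}<\tau_{\overline{Q}}\}\cap G^{c}$ as a countable union of closed sets, and cover each by a small open set using Proposition \ref{Myth2.1}(2)(b). The only (cosmetic) difference is the choice of closed decomposition --- the paper uses the level sets $\{\tau_{Q}\leq s\}\cap\{\tau_{\overline{Q}}\geq r\}$ over rational pairs $s<r$, whereas you use superlevel sets of the truncated differences $g_{m}$ --- both of which work equally well.
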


\begin{proof}
Set $\Gamma=\{{\tau}_{Q}={\tau}_{\overline{Q}}\}$. Then $c(\Gamma^{c})=0$ by
the assumption. Since the process $Y$ is quasi-continuous, for any $\delta>0$,
we can find an open set $G\subset\Omega$ such that $c(G)\leq\frac{\delta}{2}$
and $Y$ is continuous on $G^{c}\times\lbrack0,\infty)$. From Lemma
\ref{Myle3.1}, ${\tau}_{Q}$ and ${\tau}_{\overline
	{Q}}$ are lower and upper semi-continuous respectively on $G^{c}$. Moreover, we can write the polar
set
\[
\Gamma^{c}\cap G^{c}=\{{\tau}_{Q}<{\tau}_{\overline{Q}}\} \cap G^{c}%
=\bigcup_{s<r;s,r\in\mathbb{Q}}(\{{\tau}_{Q}\leq s\} \cap\{{\tau}%
_{\overline{Q}}\geq r\})\cap G^{c}.
\]
For every $s,r,$ from the semi-continuities of ${\tau}_{Q}$ and ${\tau
}_{\overline{Q}}$ on $G^{c}$, we deduce that $(\{{\tau}_{Q}\leq s\}
\cap\{{\tau}_{\overline{Q}}\geq r\})\cap G^{c}$ is closed. Then according to
Proposition \ref{Myth2.1} (2) (b), there exists an open set with any given
small capacity such that
\[
O_{sr}\supset(\{{\tau}_{Q}\leq s\} \cap\{{\tau}_{\overline{Q}}\geq r\})\cap
G^{c}.
\]
From this, we can find an open set $O_{1}\supset\Gamma^{c}\cap G^{c}$ such
that $c(O_{1})\leq\frac{\delta}{2}$. Denote the open set $O=O_{1}\cup G.$ Then
on $O^{c}$, ${\tau}_{Q}$ is lower semi-continuous and ${\tau}_{\overline{Q}}$
is upper semi-continuous, and ${\tau}_{Q}={\tau}_{\overline{Q}}$.
\end{proof}


\begin{remark}
\upshape{
\label{Re3.1} In Proposition \ref{Myth3.1}, the condition in $(A)$ that there
exist some constant $\varepsilon>0$ such that
\begin{equation}
\text{tr}[d\langle M^{P}\rangle_{t} ]\geq \varepsilon|dA_{t}^{P}|,
\label{Myeq3.12}\end{equation}
can be relaxed in two one-dimensional cases.
Note that we use inequality $($\ref{Myeq3.12}$)$ to guarantee that, in (\ref{Myeq3.11}) in the proof
of Proposition \ref{Myth3.1},
\begin{equation}
\text{tr}[d\langle M^{P}\rangle_{t}]\geq\varepsilon\langle\frac{y-z}
{|y-z|},dA_{t}^{P}\rangle,\ \ \text{ for each }y\in\overline{Q}. \label{Myeq3.14}\end{equation}
Assume that $d=1$ and $Q=(-\infty,a)$ for some $a\in\mathbb{R}$. We take the
exterior ball $U(a+1,1)=(a,a+2)$. Then the inequality $($\ref{Myeq3.14}$)$ reduces
to
\begin{equation*}
d\langle M^{P}\rangle_{t}\geq\varepsilon\langle\frac{y-a-1}{|y-a-1|}
,dA_{t}^{P}\rangle,\ \ \text{ for each }y\leq a, \label{Myeq3.1}\end{equation*}
which is just
\begin{equation}
\label{Myeq3.17}d\langle M^{P}\rangle_{t}\geq-\varepsilon dA_{t}^{P}.
\end{equation}
Similar analysis shows that when $d=1$ and $Q=(a,+\infty)$ for some
$a\in\mathbb{R},$ the inequality $($\ref{Myeq3.14}$)$ reduces to
\begin{equation}
\label{Myeq3.16}d\langle M^{P}\rangle_{t}\geq\varepsilon dA_{t}^{P}.
\end{equation}
In these two situations respectively, we can  use $($\ref{Myeq3.17}) and
$($\ref{Myeq3.16}$)$ to replace (\ref{Myeq3.12}) and get the conclusion of Proposition \ref{Myth3.1}. We can also similarly modify the assumption $(H)$ in Theorem \ref{Myth3.5} and repeat the proofs as before, to recover all the corresponding results in this subsection.
}
\end{remark}

\subsection{Integrability of exit times}

When a certain integrability condition imposed, ${\tau}_{{Q}}$ and ${{\tau
}_{\overline{Q}}}$ themselves can be quasi-continuous.

\begin{theorem}
\label{Myth3} Assume that the conclusion of Theorem \ref{Myth3.5} is true for
${\tau}_{Q}$ and ${\tau}_{\overline{Q}}$, i.e., for any $\delta>0$, we can
find an open set $O\subset\Omega$ with $c(O)\leq\delta$ such that on $O^{c}$,
${\tau}_{Q}$ is lower semi-continuous, ${\tau}_{\overline{Q}}$ is upper
semi-continuous and ${\tau}_{Q}={\tau}_{\overline{Q}}$.

\begin{description}
\item[\rm{(i)}] If
\begin{equation}
c(\{{{\tau}_{\overline{Q}}}>k \})\rightarrow0, \ \ \ \text{as}\ k\rightarrow\infty,
\label{Myeq3.13}%
\end{equation}
then ${\tau}_{{Q}}$ and ${\tau}_{\overline{Q}}$ are quasi-continuous.

\item[\rm{(ii)}] Assume that
\begin{equation}
\hat{\mathbb{E}}[{{\tau}_{\overline{Q}}}I_{\{{{\tau}_{\overline{Q}}}%
>k\}}]\rightarrow0,\ \ \ \text{as}\ k\rightarrow\infty. \label{Myeq3.3}%
\end{equation}
Then ${\tau}_{{Q}}$ and ${\tau}_{\overline{Q}}$ both belong to $L_{C}%
^{1}(\Omega)$.
\end{description}
\end{theorem}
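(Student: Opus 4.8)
The plan is to deduce both statements from the structural conclusion of Theorem \ref{Myth3.5} together with a truncation argument, the point being that on the good set $O^c$ the pair $(\tau_Q,\tau_{\overline{Q}})$ already behaves like a single continuous function wherever it is finite. First I would record the elementary fact that $\tau_Q\leq\tau_{\overline{Q}}$ everywhere (since $\overline{Q}^c\subset Q^c$), and the following continuity criterion: if on a set $S$ one has $\tau_Q$ lower semi-continuous, $\tau_{\overline{Q}}$ upper semi-continuous, $\tau_Q=\tau_{\overline{Q}}=:\tau$ and $\tau<\infty$, then $\tau$ is continuous on $S$. Indeed, for $\omega_n\to\omega_0$ in $S$, lower semi-continuity of $\tau_Q$ gives $\liminf_n\tau(\omega_n)\geq\tau(\omega_0)$ while upper semi-continuity of $\tau_{\overline{Q}}$ gives $\limsup_n\tau(\omega_n)\leq\tau(\omega_0)$, whence $\tau(\omega_n)\to\tau(\omega_0)$.

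For (i), fix $\varepsilon>0$ and, by hypothesis, choose an open $O$ with $c(O)\leq\varepsilon/2$ on whose complement the three properties hold. The only obstruction to continuity on $O^c$ is the region where $\tau_{\overline{Q}}$ is very large (or $+\infty$), so I would cut this region out. Since $\tau_{\overline{Q}}$ is upper semi-continuous on the closed set $O^c$, the superlevel set $F_k:=\{\tau_{\overline{Q}}\geq k\}\cap O^c$ is closed in $O^c$, hence closed in $\Omega$. Using $c(\{\tau_{\overline{Q}}>k\})\to0$ I pick $k$ so large that $c(F_k)\leq c(\{\tau_{\overline{Q}}\geq k\})$ is as small as desired, and then invoke the outer regularity of $c$ on closed sets (Proposition \ref{Myth2.1} (2)(b)) to find an open $O''\supset F_k$ with $c(O'')\leq\varepsilon/2$. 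Setting $\tilde O=O\cup O''$ gives $c(\tilde O)\leq\varepsilon$, and on $\tilde O^c=O^c\cap (O'')^c$ every point lies in $O^c$ (so the semi-continuities and $\tau_Q=\tau_{\overline{Q}}$ hold) and outside $F_k$, forcing $\tau_{\overline{Q}}<k<\infty$ there. The continuity criterion then shows $\tau_Q=\tau_{\overline{Q}}$ is continuous on $\tilde O^c$, which is exactly quasi-continuity.

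For (ii), I would first observe that the integrability hypothesis (\ref{Myeq3.3}) implies (\ref{Myeq3.13}), since by positive homogeneity and monotonicity $\hat{\mathbb{E}}[\tau_{\overline{Q}}I_{\{\tau_{\overline{Q}}>k\}}]\geq\hat{\mathbb{E}}[k\,I_{\{\tau_{\overline{Q}}>k\}}]=k\,c(\{\tau_{\overline{Q}}>k\})$, so that $c(\{\tau_{\overline{Q}}>k\})\to0$. Hence part (i) applies and both $\tau_Q$ and $\tau_{\overline{Q}}$ are quasi-continuous, in particular they serve as their own quasi-continuous versions. On the other hand, (\ref{Myeq3.3}) is precisely the uniform-integrability condition $\lim_{N\to\infty}\hat{\mathbb{E}}[|\tau_{\overline{Q}}|I_{\{|\tau_{\overline{Q}}|\geq N\}}]=0$ (here $\tau_{\overline{Q}}\geq0$ and $\{\tau_{\overline{Q}}\geq N\}\subset\{\tau_{\overline{Q}}>N-1\}$), and since $\tau_Q=\tau_{\overline{Q}}$ q.s. the same holds for $\tau_Q$. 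Applying the characterization Theorem \ref{LG-ch} then yields $\tau_Q,\tau_{\overline{Q}}\in L_C^1(\Omega)$.

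The main obstacle is the cutoff step in (i): the set $\{\tau_{\overline{Q}}=\infty\}$ need not be open or closed in $\Omega$, so it cannot be removed directly by a small open set. The resolution is to work inside $O^c$, where upper semi-continuity of $\tau_{\overline{Q}}$ makes its superlevel sets closed, and to couple this with the outer regularity of the capacity on closed sets; the finiteness of $\tau_{\overline{Q}}$ on the remaining good set is exactly what upgrades the two one-sided semi-continuities to genuine continuity.
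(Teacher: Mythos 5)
Your proof is correct and follows essentially the same route as the paper: truncate at a level $k$ where $c(\{\tau_{\overline{Q}}>k\})$ is small, use the semi-continuities on $O^{c}$ to make the removed set open (or its complement closed), and then combine lower semi-continuity of $\tau_{Q}$ with upper semi-continuity of $\tau_{\overline{Q}}$ and their equality to get genuine continuity on the remaining closed set; part (ii) is verbatim the paper's reduction to (i) plus Theorem~\ref{LG-ch}. The only difference is in how the small open set is produced in (i): the paper simply takes $O_{1}\cup\{\tau_{Q}>k\}$, which is already open because lower semi-continuity of $\tau_{Q}$ on $(O_{1})^{c}$ makes the complement $(O_{1})^{c}\cap\{\tau_{Q}\leq k\}$ closed, whereas you close up the superlevel set of $\tau_{\overline{Q}}$ and then invoke the outer regularity of $c$ on closed sets (Proposition~\ref{Myth2.1}~(2)(b)); your detour costs an extra (harmless, since it is a standing hypothesis) appeal to weak compactness but is equally valid.
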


\begin{proof}
Since ${\tau}_{{Q}}={\tau}_{\overline{Q}}$ q.s., we may mainly prove the
conclusions for ${\tau}_{{Q}}$.

(i) By the assumption, we can choose an open set $O_{1}$ such that
$c(O_{1})\leq\varepsilon$ and on $(O_{1})^{c}$, ${\tau}_{{Q}}$ is lower
semi-continuous, ${\tau}_{\overline{Q}}$ is upper semi-continuous and$\ {\tau
}_{{Q}}={\tau}_{\overline{Q}}$. From (\ref{Myeq3.13}), we can also take $k$
sufficiently large such that $c(\{{\tau}_{{Q}}>k\})\leq\varepsilon$. Utilizing
the semi-continuity of ${\tau}_{{Q}}$ on $(O_{1})^{c},$ we deduce that
$(O_{1})^{c}\cap\{{\tau}_{{Q}}\leq k\}$ is a closed set, and thus,
$O:=O_{1}\cup\{{\tau}_{{Q}}>k\}$ is an open set. It is easy to see that
$c(O)\leq2\varepsilon$, and ${\tau}_{{Q}}$ and
${\tau}_{\overline{Q}}$ are continuous on $O^{c}$.

(ii) Note that
\[
c({\tau}_{{Q}}>k)=\hat{\mathbb{E}}[{1\cdot}I_{\{{\tau}_{{Q}}>k\}}]\leq
\hat{\mathbb{E}}[{\tau}_{{Q}}I_{\{{\tau}_{{Q}}>k\}}]\rightarrow0,\ \ \ \text{as}
\ 1\leq k\rightarrow\infty.
\]
Then ${\tau}_{{Q}}$ is quasi-continuous and the conclusion now follows
directly from the characterization theorem of $L_{C}^{1}(\Omega)$ (Theorem
\ref{LG-ch}).
\end{proof}

Obviously, (\ref{Myeq3.3}) implies (\ref{Myeq3.13}). Now we provide a sufficient condition for (\ref{Myeq3.3}).

\begin{proposition}
\label{Myth3.3} Let $Q$ be a bounded open set and $Y$ be a $\mathcal{P}%
$-semimartingale. Assume that, for some $1\leq l\leq d,$ there exist some
constants $\varepsilon>0\ $and $\lambda\neq0$ such that
\[
\lambda dA_{t}^{P,l}+d\langle M^{P,l}\rangle_{t}\geq\varepsilon dt \ \ \text{ on
}[0,{{\tau}_{\overline{Q}}}],\text{ }P\text{-a.s., for each }P\in\mathcal{{P}%
}\text{,}%
\]
where $M^{P,l}$ and $A^{P,l}$ are the $l$-th components of $M^{P}\ $and
$A^{P},$ respectively. Then there exists a constant $C>0$ depending only on
$\lambda,\varepsilon$ and the diameter of ${Q}$ such that,
\begin{equation}
\hat{\mathbb{E}}[({{\tau}_{\overline{Q}}})^{2}]\leq C.
\end{equation}

\end{proposition}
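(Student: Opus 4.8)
The plan is to produce a uniform first--moment bound for ${\tau}_{\overline{Q}}$ by an exponential auxiliary function in the $l$--th coordinate, and then to upgrade it to the asserted second--moment bound by a restarting (strong Markov) argument. I may assume $\lambda>0$ without loss of generality: replacing the $l$--th coordinate $Y^{l}$ by $-Y^{l}$ reflects $Q$, leaves its diameter and the exit time ${\tau}_{\overline{Q}}$ unchanged, and turns $\lambda$ into $-\lambda$. Since $Q$ is bounded, $\overline{Q}$ is compact; set $m:=\inf\{x_{l}:x\in\overline{Q}\}$ and $M:=\sup\{x_{l}:x\in\overline{Q}\}$, so that $M-m\leq\mathrm{diam}(Q)=:D$ and $Y^{l}_{t}\in[m,M]$ for $t\leq{\tau}_{\overline{Q}}$.

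The key device is the function $w(y)=-\tfrac{\lambda^{2}}{2\varepsilon}e^{(2/\lambda)(y-m)}$ on $[m,M]$, which satisfies $w'=\tfrac{\lambda}{2}w''$ and $w''(y)=-\tfrac{2}{\varepsilon}e^{(2/\lambda)(y-m)}<0$. By It\^o's formula the finite--variation part of $w(Y^{l}_{t})$ is $w'(Y^{l}_{t})\,dA^{P,l}_{t}+\tfrac{1}{2}w''(Y^{l}_{t})\,d\langle M^{P,l}\rangle_{t}=\tfrac{1}{2}w''(Y^{l}_{t})\big(\lambda\,dA^{P,l}_{t}+d\langle M^{P,l}\rangle_{t}\big)$, so, using the hypothesis $\lambda\,dA^{P,l}_{t}+d\langle M^{P,l}\rangle_{t}\geq\varepsilon\,dt$ together with $w''<0$,
\[
\tfrac{1}{2}w''(Y^{l}_{t})\big(\lambda\,dA^{P,l}_{t}+d\langle M^{P,l}\rangle_{t}\big)\leq\tfrac{1}{2}w''(Y^{l}_{t})\,\varepsilon\,dt=-e^{(2/\lambda)(Y^{l}_{t}-m)}\,dt\leq-dt\quad\text{on}\ [0,{\tau}_{\overline{Q}}].
\]
Localizing $M^{P,l}$ by stopping times $\sigma_{n}\uparrow\infty$ so the stochastic integral against $dM^{P,l}$ is a true martingale, taking $P$--expectation, and letting $n,t\to\infty$ by monotone convergence, I obtain $E_{P}[{\tau}_{\overline{Q}}]\leq\mathrm{osc}_{[m,M]}w\leq\tfrac{\lambda^{2}}{2\varepsilon}(e^{2D/\lambda}-1)=:K$ for every $P\in\mathcal{P}$, hence $\hat{\mathbb{E}}[{\tau}_{\overline{Q}}]\leq K$, with $K$ depending only on $\lambda,\varepsilon,D$.

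To pass to the second moment I would restart $Y$ at an arbitrary time $s$. Fix $P$ and let $\{P^{\omega}\}$ be a regular conditional probability distribution of $P$ given $\mathcal{F}_{s}$, as in the proofs of Proposition \ref{Myth3.7} and Lemma \ref{Myth3.8}. On $\{{\tau}_{\overline{Q}}>s\}\in\mathcal{F}_{s}$, the shifted process $t\mapsto Y_{s+t}$ is, under $P^{\omega}$, again a continuous semimartingale whose local--martingale and finite--variation parts are the shifts of $M^{P}$ and $A^{P}$, whose exit time from $\overline{Q}$ equals ${\tau}_{\overline{Q}}-s$, and which satisfies the same growth condition on $[0,{\tau}_{\overline{Q}}-s]$. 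Applying the first--moment estimate to this shifted process gives $E_{P^{\omega}}[{\tau}_{\overline{Q}}-s]\leq K$, i.e. $E_{P}[({\tau}_{\overline{Q}}-s)I_{\{{\tau}_{\overline{Q}}>s\}}\mid\mathcal{F}_{s}]\leq K$ $P$--a.s. Then, writing ${\tau}_{\overline{Q}}^{2}=2\int_{0}^{\infty}({\tau}_{\overline{Q}}-s)^{+}\,ds$ and using Fubini's theorem,
\[
E_{P}[{\tau}_{\overline{Q}}^{2}]=2\int_{0}^{\infty}E_{P}\big[({\tau}_{\overline{Q}}-s)^{+}\big]\,ds=2\int_{0}^{\infty}E_{P}\big[I_{\{{\tau}_{\overline{Q}}>s\}}\,E_{P}[{\tau}_{\overline{Q}}-s\mid\mathcal{F}_{s}]\big]\,ds\leq 2K\int_{0}^{\infty}P({\tau}_{\overline{Q}}>s)\,ds=2K\,E_{P}[{\tau}_{\overline{Q}}]\leq 2K^{2}.
\]
Since this holds uniformly in $P$, taking the supremum yields $\hat{\mathbb{E}}[{\tau}_{\overline{Q}}^{2}]\leq 2K^{2}=:C$, depending only on $\lambda,\varepsilon$ and $\mathrm{diam}(Q)$.

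The It\^o computation and the Fubini step are routine. The main obstacle is the restarting step: in the non--dominated, quasi--sure setting one must justify carefully, via regular conditional probability distributions, that conditioning on $\mathcal{F}_{s}$ preserves both the semimartingale decomposition and the growth hypothesis for the shifted process, so that the first--moment bound $K$ applies verbatim to the residual exit time ${\tau}_{\overline{Q}}-s$, and that the resulting inequality is uniform enough over $P\in\mathcal{P}$ to pass to the sublinear expectation. This is exactly the type of argument used in Proposition \ref{Myth3.7} and Lemma \ref{Myth3.8}, but it now has to be carried out for the finite--variation part $A^{P}$ in addition to the martingale part $M^{P}$.
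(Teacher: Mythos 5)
Your proposal is correct, and the first half is essentially the paper's Step 1: the auxiliary function $w(y)=-\tfrac{\lambda^{2}}{2\varepsilon}e^{(2/\lambda)(y-m)}$ is, up to sign and normalization, the same exponential $h(y)=\beta e^{2y_{1}/\lambda}$ that the paper borrows from Friedman, and both exploit the identity that makes the drift of $h(Y^{l})$ a positive multiple of $\lambda\,dA^{P,l}+d\langle M^{P,l}\rangle\geq\varepsilon\,dt$ to get a uniform first-moment bound $K$. Where you genuinely diverge is the upgrade to the second moment. The paper stays entirely analytic: it applies It\^{o}'s formula a second time to the time-weighted function $t\,h(y)$, so that the $\int_{0}^{\tau\wedge t}s\,ds=\tfrac12(\tau\wedge t)^{2}$ term appears directly and the bound $E_{P}[(\tau_{\overline{Q}}\wedge t)^{2}]\leq 2C_{h}E_{P}[\tau_{\overline{Q}}\wedge t]\leq 4C_{h}^{2}$ follows with no conditioning whatsoever. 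You instead restart the process at each fixed time $s$ via regular conditional probability distributions, prove $E_{P}[(\tau_{\overline{Q}}-s)^{+}\mid\mathcal{F}_{s}]\leq K$, and integrate with Tonelli using $\tau^{2}=2\int_{0}^{\infty}(\tau-s)^{+}\,ds$. Both routes yield a constant depending only on $\lambda,\varepsilon$ and $\mathrm{diam}(Q)$ (note that, as in the paper, one should normalize $0\in Q$ by translation so that the bound on $h$, resp.\ the oscillation of $w$, depends only on the diameter). The paper's version is shorter and avoids the measure-theoretic overhead of r.c.p.d.'s for this proposition (that machinery is only invoked where it is unavoidable, in Proposition \ref{Myth3.7}); your version costs the restarting verification you correctly flag as the main obstacle, but it is the same argument as Lemma \ref{Myth3.8} applied at a deterministic time, and it buys a scheme that iterates to all moments $E_{P}[\tau_{\overline{Q}}^{p}]\leq p!\,K^{p}$ rather than just $p=2$. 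Your reflection trick reducing $\lambda<0$ to $\lambda>0$ is also fine and is implicit in the paper's ``without loss of generality'' normalization.
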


\begin{proof}
We mainly use an auxiliary function from \cite[p. 145]{Fr}. Without loss of
generality, we can assume $0\in Q$ and $l=1$.

\textit{Step 1.} Let $P\in\mathcal{{P}}$\ be given. Let $h(y):=\beta e^{\frac
{2y_{1}}{\lambda}},$ and take $\beta>0$ large enough such that $P\text{-a.s.}$
for each $y\in\overline{Q}$,
\[
\frac{2}{\lambda}h(y)(dA_{t}^{P,1}+\frac{1}{\lambda}d\langle M^{P,1}%
\rangle_{t})=\frac{2}{\lambda^{2}}h(y)(\lambda dA_{t}^{P,1}+d\langle
M^{P,1}\rangle_{t})\geq dt\ \ \text{ on }[0,{{\tau}_{\overline{Q}}}].
\]
By It\^{o}'s formula, we have
\begin{align*}
h(Y_{{{\tau}_{\overline{Q}}}\wedge t})-h(Y_{0})= &  \int_{0}^{{{\tau
}_{\overline{Q}}}\wedge t}\frac{2}{\lambda}h(Y_{s})dM_{s}^{P,1}+\int%
_{0}^{{{\tau}_{\overline{Q}}}\wedge t}\frac{2}{\lambda}h(Y_{s})dA_{s}%
^{P,1}+\frac{1}{2}\int_{0}^{{{\tau}_{\overline{Q}}}\wedge t}\frac{4}%
{\lambda^{2}}h(Y_{s})d\langle M^{P,1}\rangle_{s}\\
= &  \int_{0}^{{{\tau}_{\overline{Q}}}\wedge t}\frac{2}{\lambda}h(Y_{s}%
)dM_{s}^{P,1}+\int_{0}^{{{\tau}_{\overline{Q}}}\wedge t}\frac{2}{\lambda^{2}%
}h(Y_{s})(\lambda dA_{s}^{P,1}+d\langle M^{P,1}\rangle_{s}).
\end{align*}
Taking expectation on both sides and using a standard localization argument when necessary, we get
\[
2C_{h}\geq{{E}}_{P}[{{\tau}_{\overline{Q}}}\wedge t].
\]
where $C_{h}$ is the bound of $h$ on $\overline{Q},$ which is independent of
$P\in\mathcal{{P}}$ and $t$.

\textit{Step 2.} Consider $th(y)$, where $h$ with $\beta$ is given as in Step 1.
Applying It\^{o}'s formula, we have
\begin{align*}
({{\tau}_{\overline{Q}}}\wedge t)h(Y_{{{\tau}_{\overline{Q}}\wedge t}})=  &
\int_{0}^{{{{{\tau}_{\overline{Q}}}}}\wedge t}h(Y_{s})ds+\int_{0}^{{{{{\tau
}_{\overline{Q}}}}}\wedge t}\frac{2s}{\lambda}h(Y_{s})dM_{s}^{P,1}+\int%
_{0}^{{{\tau}_{\overline{Q}}}\wedge t}\frac{2s}{\lambda}h(Y_{s})dA_{s}^{P,1}\\
&  +\frac{1}{2}\int_{0}^{{{{{\tau}_{\overline{Q}}}}}\wedge t}\frac{4s}
{\lambda^{2}}h(Y_{s})d\langle M^{P,1}\rangle_{s}\\
\geq &  \int_{0}^{{{{{\tau}_{\overline{Q}}}}}\wedge t}\frac{2s}{\lambda
}h(Y_{s})dM_{s}^{P,1}+\int_{0}^{{{\tau}_{\overline{Q}}}\wedge t}\frac
{2s}{\lambda^{2}}h(Y_{s})(\lambda dA_{s}^{P,1}+d\langle M^{P,1}\rangle_{s}).
\end{align*}
Taking expectation on both sides, we get
\[
C_{h}{{E}}_{P}[{{\tau}_{\overline{Q}}}\wedge t]\geq{{E}}_{P}[({{\tau
}_{\overline{Q}}}\wedge t)h(Y_{{{\tau}_{\overline{Q}}}})]\geq{{E}}_{P}
[\int_{0}^{{{\tau}_{\overline{Q}}}\wedge t}sds]=\frac{1}{2}{{E}}_{P}[({{\tau
}_{\overline{Q}}\wedge t})^{2}],
\]
which together with Step 1 implies
\[
{{E}}_{P}[({{\tau}_{\overline{Q}}\wedge t})^{2}]\leq4(C_{h})^{2}.
\]
Taking supremum over $P\in\mathcal{{P}}$ and then letting $t\rightarrow
\infty,$ we obtain
\[
\hat{\mathbb{E}}[({{\tau}_{\overline{Q}}})^{2}]\leq4(C_{h})^{2},
\]
as desired.
\end{proof}

\begin{remark}
\upshape{
	If $\hat{\mathbb{E}}[({{\tau}_{\overline{Q}}})^{2}]<\infty$, then by the
	Markov inequality, we obtain that (ii) in Theorem \ref{Myth3} holds: $\hat{\mathbb{E}}[{{\tau}_{\overline{Q}}}I_{\{{{\tau
			}_{\overline{Q}}}>k \}}]\leq\frac{\hat{\mathbb{E}}[({{\tau}_{\overline{Q}}		})^{2}]}{k}\rightarrow0,\ \text{as}\ k\rightarrow\infty.$}
\end{remark}

\section{Quasi-continuous processes}

In the previous section, the regularity theorem for exit times (Theorem
\ref{Myth3.5}) was established under the assumption that the $\mathcal{P}%
$-semimartingale $Y$ has some kind of regularity which is called
quasi-continuity in the process sense. In the present section, we shall give
a characterization theorem on the quasi-continuity of processes as well as
some related properties of stopped processes.

\subsection{Characterization of quasi-continuous processes}

Assume that $\mathcal{P}$ is a family of probability measures on $\Omega$,
$c$ and $\mathbb{\hat{E}}$ are the corresponding upper capacity and
expectation, respectively.

Now we give a general criterion (characterization) on the quasi-continuity of
processes. It is convenient to first introduce the notion of quasi-continuity
on the finite interval. We say that a process $F=(F_{t})_{t\in[0,\infty)}$ is
quasi-continuous on $\Omega\times[0,T]$ if for each $\varepsilon>0$, there
exists an open set $G\subset\Omega$ with $c(G)<\varepsilon$ such that
$F_{\cdot}(\cdot)$ is continuous on $G^{c}\times\lbrack0,T]$. Obviously, if
$F$ is quasi-continuous on $\Omega\times[0,\infty)$, then $F$ is
quasi-continuous on $\Omega\times[0,T]$, for each $T>0$.

\begin{theorem}
\label{Myth3.11} Let $X:\Omega\times[0,\infty)\rightarrow\mathbb{R}$ be a stochastic process, i.e., $X_t$ is $\mathcal{B}(\Omega)$-measurable for each $t\geq0$.

\begin{description}
	\item[\rm{{(i)}}] $X$ has a quasi-continuous version on $\Omega
\times\lbrack0,T]$ if and only if  we can find a sequence $X^{n}\in
C(\Omega\times\lbrack0,T])$ such that, for each $\varepsilon>0,$
\begin{equation}
c(\{\sup_{0\leq t\leq T}|X_{t}^{n}-X_{t}|>\varepsilon\})\rightarrow
0,\ \ \ \text{as}\ n\rightarrow\infty\text{.} \label{Myeq3.5}%
\end{equation}
Moreover, $X$ and this version are q.s. continuous in $t\in\lbrack0,T],$
i.e., continuous in $t\in\lbrack0,T]$ for q.s. $\omega\in\Omega.$

\item[\rm{{(ii)}}] $X$ has a quasi-continuous version on $\Omega
\times\lbrack0,\infty)$ if and only if for each $T>0,$ there exists a sequence
$X^{n}\in C(\Omega\times\lbrack0,T])$ such that (\ref{Myeq3.5}) holds. Also,
$X$ and this version are  q.s. continuous in $t\in\lbrack0,\infty).$
\end{description}
\end{theorem}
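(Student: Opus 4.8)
The plan is to prove Theorem \ref{Myth3.11} by reducing everything to a completeness/approximation argument in the spirit of Theorem \ref{LG-ch}, but carried out uniformly in the time variable via the supremum norm over $[0,T]$. I will treat part (i) in full and then bootstrap to part (ii). The key object is the ``process $L^p$ space'' with the norm $\|X\|_{T,\infty}:=\hat{\mathbb{E}}[\sup_{0\leq t\leq T}|X_t|]$; condition (\ref{Myeq3.5}) says precisely that $X$ is a limit \emph{in capacity} (uniformly in $t\in[0,T]$) of a sequence of genuinely continuous processes $X^n\in C(\Omega\times[0,T])$.

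For the ``if'' direction of (i), I would start from a sequence $X^n\in C(\Omega\times[0,T])$ satisfying (\ref{Myeq3.5}) and pass to a fast subsequence. Choosing $n_k$ so that $c(\{\sup_{0\leq t\leq T}|X^{n_k}_t-X_t|>2^{-k}\})\leq 2^{-k}$, I set $O_m:=\bigcup_{k\geq m}\{\sup_{0\leq t\leq T}|X^{n_k}_t-X_t|>2^{-k}\}$, which is open (each set inside is open because $(\omega,t)\mapsto X^{n_k}_t(\omega)$ is continuous, hence $\omega\mapsto\sup_t|X^{n_k}_t-X_t|$ is lower semi-continuous once we know $X$ is the q.s.\ limit; more carefully I would define the candidate version $\tilde X$ directly from the subsequence and work with that). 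On $O_m^c$ the tail $\sup_t|X^{n_k}_t-X_t|\leq 2^{-k}$ for all $k\geq m$, so $X^{n_k}\to \tilde X$ \emph{uniformly} on $O_m^c\times[0,T]$; a uniform limit of functions continuous on $O_m^c\times[0,T]$ is continuous there. Since $c(O_m)\leq 2^{-m+1}\to0$, $\tilde X$ is quasi-continuous on $\Omega\times[0,T]$, and on the polar set $\bigcap_m O_m$ we have $\tilde X=X$ q.s., giving the desired version. The q.s.\ continuity in $t$ follows because on each $O_m^c$ the limit is jointly continuous, and $\bigcup_m O_m^c$ has full capacity.

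For the ``only if'' direction I would use Proposition \ref{Myth2.1} and the weak compactness of $\mathcal{P}$ together with the tightness-type lemma that closed sets are outer-approximated by open sets (Proposition \ref{Myth2.1}(2)(b)). Starting from a quasi-continuous version defined on $G^c\times[0,T]$ with $c(G)$ small and $G$ open, the task is to produce a \emph{globally} continuous $X^n$ on all of $\Omega\times[0,T]$ that is uniformly close in capacity. The natural device is a Tietze-type extension: $G^c\times[0,T]$ is a closed subset of the metric space $\Omega\times[0,T]$, so by the Tietze extension theorem the continuous bounded function $\tilde X|_{G^c\times[0,T]}$ (truncated at level $N$) extends to a continuous bounded function on all of $\Omega\times[0,T]$. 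Letting $c(G)\to0$ and $N\to\infty$ simultaneously along a sequence, and using (via Theorem \ref{LG-ch}-type integrability of $\sup_t|X_t|$ or a direct capacity estimate) that the contributions from $G$ and from $\{|X|>N\}$ are negligible in capacity uniformly in $t$, yields the approximating sequence satisfying (\ref{Myeq3.5}).

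The main obstacle I anticipate is the \emph{uniformity in $t$}: the results quoted from \cite{D-H-P} (Theorem \ref{LG-ch}) and Proposition \ref{Myth2.1} are stated for random variables, i.e.\ for a single time slice, whereas here I must control $\sup_{0\leq t\leq T}$. Concretely, two points need care. First, to make the sets $O_m$ open I need $\omega\mapsto\sup_{0\leq t\leq T}|X^{n_k}_t(\omega)-\tilde X_t(\omega)|$ to be lower semi-continuous or at least for the superlevel sets to be open, which is where joint continuity of the $X^{n_k}$ is essential and why working on $[0,T]$ compact (so the sup is attained and behaves well) matters. Second, the Tietze extension must be applied to the \emph{joint} function on $\Omega\times[0,T]$, not to each time slice separately, to guarantee the extended $X^n$ lies in $C(\Omega\times[0,T])$ rather than merely being continuous in $\omega$ for each fixed $t$. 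Once (i) is established, part (ii) is routine: if $X$ has a quasi-continuous version on each $\Omega\times[0,T]$ one patches the exceptional open sets by taking, for a sequence $T_j\uparrow\infty$, open sets $G_j$ with $c(G_j)\leq\varepsilon 2^{-j}$ on whose complement $X$ is continuous on $[0,T_j]$, and sets $G:=\bigcup_j G_j$; then $c(G)\leq\varepsilon$ and on $G^c\times[0,\infty)$, $X$ is continuous because continuity on $G^c\times[0,T_j]$ for every $j$ gives continuity on the union. The converse in (ii) is immediate from the definition of quasi-continuity on $\Omega\times[0,\infty)$ restricted to $[0,T]$ combined with the already-proved (i).
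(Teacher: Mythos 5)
Your overall architecture coincides with the paper's (fast subsequence plus Borel--Cantelli and uniform convergence off a small open set for the ``if'' direction; Tietze extension for the ``only if'' direction; patching over $[0,k]$ for (ii)), but two steps do not work as you have written them. First, in the ``if'' direction your exceptional sets $O_m=\bigcup_{k\geq m}\{\sup_{0\leq t\leq T}|X^{n_k}_t-X_t|>2^{-k}\}$ are not open: $X$ (or your candidate $\tilde X=\limsup_k X^{n_k}$, which is a $\limsup$ of continuous functions and hence has no semicontinuity in general) is not continuous, so $\omega\mapsto\sup_t|X^{n_k}_t-X_t|$ has no reason to be lower semi-continuous and the superlevel sets need not be open. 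You flag this as the main obstacle but do not supply the fix. The paper's resolution is to work with \emph{consecutive differences of the approximants}: the sets $A_k=\{\sup_{0\leq t\leq T}|X^{n_{k+1}}_t-X^{n_k}_t|>2^{-k}\}$ are genuinely open because the function inside is a supremum of continuous functions of $\omega$, hence lower semi-continuous; on $(\bigcup_{k\geq k_0}A_k)^c$ the subsequence is uniformly Cauchy, so it converges uniformly and the limit is continuous there. This small change is essential, not cosmetic.

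Second, in the ``only if'' direction your truncation at level $N$ introduces an error term governed by $c(\{\sup_{0\leq t\leq T}|X_t|>N\})$, and nothing in the hypotheses forces this to tend to $0$: the theorem assumes only quasi-continuity, with no integrability, and for a non-dominated $\mathcal{P}$ a decreasing sequence of sets need not have capacity tending to $0$ (this is exactly the distinction between quasi-continuity and membership in $L^1_C$ drawn in Theorem \ref{LG-ch}). The truncation is also unnecessary: the Tietze extension theorem applies to an arbitrary real-valued continuous function on the closed set $G^c\times[0,T]$ of the metric space $\Omega\times[0,T]$, which is what the paper uses, and then $\{\sup_t|Y_t-X_t|>\varepsilon\}\subset G$ directly gives the capacity bound. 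Finally, your sketch of (ii) glosses over the gluing of the versions $I^k$ obtained on the different intervals $[0,k]$: they agree only up to polar sets, and one must verify that the union of the exceptional open sets together with the disagreement set $F$ can still be taken open (the paper does this by showing its complement is an intersection of closed sets, using the continuity of the $I^k$ on the complement of the open exceptional set); a polar set is not automatically contained in an open set of small capacity, so this step cannot simply be absorbed into ``patching''.
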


\begin{proof}
(i) Note that
\[
c(\{\sup_{0\leq t\leq T}|X_{t}^{n}-X_{t}^{m}|>\varepsilon\})\rightarrow
0,\ \ \ \text{as}\ n,m\rightarrow\infty.
\]
We can find a sequence $(X_{t}^{n_{k}})_{k\geq1}$ such that
\[
c(\{\sup_{0\leq t\leq T}|X_{t}^{n_{k+1}}-X_{t}^{n_{k}}|>\frac{1}{2^{k}}
\})\leq\frac{1}{2^{k}},\ \ \ \forall k\geq1.
\]
Denote
\[
A_{k}=\{\sup_{0\leq t\leq T}|X_{t}^{n_{k+1}}-X_{t}^{n_{k}}|>\frac{1}{2^{k}%
}\}.
\]
Then
\[
\sum_{k=1}^{\infty}c(A_{k})\leq\sum_{k=1}^{\infty}\frac{1}{2^{k}}=1.
\]
As a consequence, by Borel-Cantelli Lemma, $D_{T}:=\limsup_{k\rightarrow
\infty}A_{k}$ is polar. Since $X^{n_{k}}_t$ is continuous on $\Omega$, for each $t$ and $k\geq1$, then $\sup_{0\leq t\leq T}
|X_{t}^{n_{k+1}}-X_{t}^{n_{k}}|$ is lower semi-continuous on $\Omega,$ and thus, the set
$A_{k}$ is open on $\Omega$ (The above semi-continuity is from a lemma that the supremum of a family of (lower semi-) continuous functions is still lower semi-continuous. Moreover,  $\sup_{0\leq t\leq T}
|X_{t}^{n_{k+1}}-X_{t}^{n_{k}}|$ can be shown to be continuous on $\Omega$ by a deep result in the classical analysis (see \cite{WB}), by the fact that $X^{n_{k}}_t$ is continuous on $\Omega
\times\lbrack0,T]$, for each $k\geq1$,  and $\lbrack0,T]$ is compact. But the lower semi-continuous is enough for us to guarantee that $A_{k}$ is open). Therefore, $\cup_{k\geq k_{0}}A_{k}\supset D_{T}$
is an open set and can have any sufficient small capacity when $k_{0}$ large
enough. We define the limit of $X^{n_{k}}$ on $[0,T]$ by
\[
I^T_{t}(\omega)=\limsup_{k\rightarrow\infty}X_{t}^{n_{k}}(\omega).
\]
As each $X^{n_{k}}$ is continuous in $(\omega,t)$, for all $k\geq1$, and
$X^{n_{k}}$ converges uniformly on $(\cup_{k\geq k_{0}}A_{k})^{c}\times
\lbrack0,T].$ Therefore, $I_{\cdot}^{T}(\cdot)$ is continuous on $(\cup_{k\geq
k_{0}}A_{k})^{c}\times\lbrack0,T].$ Thus, the process $I^{T}$ is
quasi-continuous on $\Omega\times\lbrack0,T]$.

Moreover, note that for each $\omega\in(D_{T})^{c},$ $t\rightarrow
X_{t}^{n_{k}}(\omega)$ converges to $t\rightarrow I_{t}^{T}(\omega)$
uniformly, thus q.s. $t\rightarrow I_{t}^{T}(\omega)$ is continuous on $[0,T].$ So is $X$ since they are versions of each other on $[0,T].$

To prove the reverse direction, we can assume that $X$ itself is quasi-continuous on
$\Omega\times\lbrack0,T]$, since if $X'$ is the quasi-continuous version of $X$ on
$\Omega\times\lbrack0,T]$, then $\sup_{0\leq t\leq T}|X_t-X'_t|=0$ q.s.  For any $\varepsilon>0$, we can find an open set
$G\subset\Omega$ with $c(G)<\varepsilon$ such that $X_{\cdot}(\cdot)$ is
continuous on $G^{c}\times\lbrack0,T]$. By the Tietze's extension theorem,
there exists a $Y$ which is continuous on $\Omega\times\lbrack0,T]$ such that
$X=Y$ on $G^{c}\times\lbrack0,T].$ Then
\[
c(\{\sup_{0\leq t\leq T}|Y_{t}-X_{t}|>\varepsilon\})\leq c(\{\sup_{0\leq t\leq T}|Y_{t}-X_{t}|>\varepsilon\}\cap G)\leq
c(G)\leq\varepsilon.
\]

(ii) For each $k\geq1,$ from (i), we get a quasi-continuous version $I^{k}$ of $X$ on $[0,k]$. Then $I_{t}
^{k}=I_{t}^{k^{\prime}},0\leq t\leq k\wedge k^{\prime}$ q.s. for each
$k,k^{\prime}\geq1.$ Denote the polar sets
\[
F^{k,k^{\prime}}:=\{\omega\in\Omega:I_{t}^{k}(\omega)=I_{t}^{k^{\prime}}
(\omega),0\leq t\leq k\wedge k^{\prime}\text{ does not hold}\}\ \ \ \text{and
\ }F:=\cup_{k,k^{\prime}\geq1}F^{k,k^{\prime}}.
\]
Then we can define
\[
I_{t}(\omega)=
\begin{cases}
I_{t}^{k}(\omega),\ t\leq k;\quad\omega\in F^{c},\\
0;\quad\quad\quad\ \ \ \ \ \ \ \ \ \omega\in F.
\end{cases}
\]
For any given $\varepsilon>0$ and for each $k\geq1,$ from (i), we can find an
open set $O_{k}$ such that $c(O_{k})\leq\frac{\varepsilon}{2^{k}}$ and $I^{k}$
is continuous on $(O_{k})^{c}\times\lbrack0,k].$ Denoting the open set
$O^{\prime}=\cup_{k\geq1}O_{k},$ then $c(O^{\prime})\leq\varepsilon.$ We also
denote $O=O^{\prime}\cup F$ and obviously that $c(O)\leq\varepsilon.$ It is
easy to see that $I$ is continuous on $O^{c}\times\lbrack0,\infty).$ Now it
remains to prove that $O$ is open. To that end, it suffices to show that
$O^{c}$ is closed. Given any $k,k^{\prime}\geq1$. For every $t\in
\lbrack0,k\wedge k^{\prime}]$, since $I_{t}^{k},I_{t}^{k^{\prime}}$ is
continuous on $(O^{\prime})^{c},$ then $\{\omega\in\Omega:I_{t}^{k}%
(\omega)=I_{t} ^{k^{\prime}}(\omega)\}\cap(O^{\prime})^{c}=\{\omega\in
\Omega:I_{t}^{k} (\omega)-I_{t}^{k^{\prime}}(\omega)=0\}\cap(O^{\prime})^{c}$
is a closed set. Thus,
\[
(F^{k,k^{\prime}})^{c}\cap(O^{\prime})^{c}=(\cap_{t\in\lbrack0,k\wedge
k^{\prime}]}\{\omega\in\Omega:I_{t}^{k}(\omega)=I_{t}^{k^{\prime}}
(\omega)\})\cap(O^{\prime})^{c}=\cap_{t\in\lbrack0,k\wedge k^{\prime}
]}(\{\omega\in\Omega:I_{t}^{k}(\omega)=I_{t}^{k^{\prime}}(\omega
)\}\cap(O^{\prime})^{c})
\]
is closed. This implies
\[
O^{c}=(O^{\prime})^{c}\cap F^{c}=(O^{\prime})^{c}\cap(\cap_{k,k^{\prime}\geq
1}(F^{k,k^{\prime}})^{c})=\cap_{k,k^{\prime}\geq1}((F^{k,k^{\prime}})^{c}
\cap(O^{\prime})^{c})
\]
is closed, as desired.

The q.s. continuity of $I$ in $t$ on $[0,\infty)$
follows from the above definition of $I$ and the q.s.  continuity of $I^k$ in $t$ on $[0,k]$ for each $k\geq 1$. Moreover, $X$ is also q.s. continuous since it is a version of $I$ on $[0,\infty)$.

Now we prove the reverse direction. If $X$ is quasi-continuous on
$\Omega\times\lbrack0,\infty)$, then $X$ is quasi-continuous on $\Omega
\times\lbrack0,T],$ for each $T>0,$ and the conclusion follows from (i).
\end{proof}

\begin{remark}
	\upshape{
	From the proof of the above theorem, we know that the direction of obtaining the approximation property in the form of (\ref{Myeq3.5}) from the quasi-continuity is always trivial. In fact,  we can get a better approximation property in (ii): If $X$ is quasi-continuous on $\Omega\times\lbrack0,\infty)$, then there exists a sequence $X^{n}\in
	C(\Omega\times\lbrack0,\infty))$ such that, for each $\varepsilon>0,$
	\begin{equation}\label{4-22}
	c(\{\sup_{0\leq t< \infty}|X_{t}^{n}-X_{t}|>\varepsilon\})\rightarrow
	0,\ \ \ \text{as}\ n\rightarrow\infty\text{.}
	\end{equation}
	Indeed, for any $\varepsilon>0$, by a similar analysis as in the proof of (i), we can find an open set
	$G\subset\Omega$ with $c(G)<\varepsilon$ and a function  $Y$ which is continuous on $\Omega\times\lbrack0,\infty)$ such that
	$X=Y$ on $G^{c}\times\lbrack0,\infty).$ Then
	\[
	c(\{\sup_{0\leq t<\infty }|Y_{t}-X_{t}|>\varepsilon\})\leq c(\{\sup_{0\leq t<\infty}|Y_{t}-X_{t}|>\varepsilon\}\cap G)\leq
	c(G)\leq\varepsilon.
	\]
	So we can see that the weak form of approximation condition in (ii) of Theorem \ref{Myth3.11} that for each $T>0,$ (\ref{Myeq3.5}) holds for some sequence $X^n$, is equivalent to the stronger form (\ref{4-22}), which does not seem obvious. For Theorem \ref{Myth3.11}, we mainly use its nontrivial direction of obtaining the quasi-continuity from the approximation property in the concrete problems.
So in (ii), we prefer the weak form condition since, by the observation that it is verified on finite time interval with  approximation sequence $X^n$ that can depend on $T$, it is more convenient to apply.	}
\end{remark}

In particular, taking $T=0$ in Theorem \ref{Myth3.11} (i), we get the
corresponding quasi-continuity characterization theorem for random variables,
which also generalizes Theorem \ref{LG-ch} a bit.

\begin{corollary}
Let $X:\Omega\rightarrow\mathbb{R}$ be a random variable. Then $X$ is
quasi-continuous if and only if there exists a sequence $X^{n}\in C(\Omega)$
such that, for each $\varepsilon>0$,
\begin{equation}
c(\{|X^{n}-X|>\varepsilon\})\rightarrow0,\ \ \ \text{as}\ n\rightarrow\infty.
\end{equation}

\end{corollary}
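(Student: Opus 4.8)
The plan is to obtain the statement as the degenerate case $T=0$ of Theorem \ref{Myth3.11} (i). First I would identify the random variable $X:\Omega\to\mathbb{R}$ with the process $\tilde{X}$ on $\Omega\times[0,0]=\Omega\times\{0\}$ given by $\tilde{X}_0:=X$; the measurability hypothesis on $X$ is exactly the requirement that $\tilde{X}_t$ be $\mathcal{B}(\Omega)$-measurable for each $t$. Under this identification $C(\Omega\times[0,0])$ is nothing but $C(\Omega)$, so a sequence of continuous approximating processes on $\Omega\times[0,0]$ is precisely a sequence $X^{n}\in C(\Omega)$. Since $\sup_{0\le t\le 0}|\tilde{X}^{n}_t-\tilde{X}_t|=|X^{n}-X|$, the approximation condition (\ref{Myeq3.5}) collapses to exactly $c(\{|X^{n}-X|>\varepsilon\})\to 0$, which is the condition in the statement.

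Next I would translate the two regularity notions. A process $F$ on $\Omega\times[0,0]$ is quasi-continuous in the process sense iff for each $\varepsilon>0$ there is an open $G$ with $c(G)<\varepsilon$ on whose complement $F$ is continuous on $G^{c}\times\{0\}$; for $\tilde{X}$ this says exactly that $X|_{G^{c}}$ is continuous, i.e. that $X$ is quasi-continuous in the random-variable sense of the Definition. Hence ``$\tilde{X}$ has a quasi-continuous version on $\Omega\times[0,0]$'' means ``$X$ agrees q.s. with some quasi-continuous random variable,'' which, under the standing convention of not distinguishing random variables that coincide q.s., is the same as ``$X$ is quasi-continuous.'' With these identifications Theorem \ref{Myth3.11} (i) reads off verbatim as the asserted equivalence, and its ``moreover, q.s.\ continuous in $t$'' clause is vacuous on the single point $\{0\}$.

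The only point deserving care -- and the one I expect to be the main (if mild) obstacle -- is the gap between being quasi-continuous and merely having a quasi-continuous version. The forward implication is the easy half: if $X$ is quasi-continuous, a Tietze extension on $\Omega$ of the continuous restriction $X|_{G^{c}}$ produces the required $X^{n}\in C(\Omega)$, and this is literally the $T=0$ instance of the corresponding step in Theorem \ref{Myth3.11} (i). The reverse, substantial implication yields from that theorem only a quasi-continuous version of $X$; it is precisely the q.s.\ identification that upgrades the conclusion to ``$X$ is quasi-continuous,'' so I would state this reconciliation explicitly rather than leave it implicit. I would also remark that this is exactly where the corollary sharpens Theorem \ref{LG-ch}: it characterizes quasi-continuity itself and dispenses with the uniform-integrability requirement $\lim_{N\to\infty}\hat{\mathbb{E}}[|X|^{p}I_{\{|X|\ge N\}}]=0$ appearing there.
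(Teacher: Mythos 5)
Your proposal is correct and follows exactly the paper's route: the paper obtains the corollary by "taking $T=0$ in Theorem \ref{Myth3.11} (i)," which is precisely your specialization, and your explicit reconciliation of "quasi-continuous" versus "has a quasi-continuous version" via the standing q.s.-identification convention is a careful spelling-out of a point the paper leaves implicit.
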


The following two results concern the quasi-continuity of stopped processes.

\begin{proposition}
\label{Myth3.10}Let $X=(X_{t})_{t\in[0,\infty)}$ be a process. The random
variable $X_{\tau}$ is quasi-continuous if
 $X$ is a quasi-continuous process on $\Omega\times[0,T]$ $($$\Omega
\times[0,\infty)$ resp.$)$
and $\tau:\Omega\rightarrow[0,T]$ $($$[0,\infty)$ resp.$)$ is a quasi-continuous stopping time.
\end{proposition}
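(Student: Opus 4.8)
The plan is to combine the two quasi-continuity hypotheses directly, by exhibiting, outside a single open set of small capacity, the map $\omega\mapsto X_{\tau(\omega)}(\omega)$ as a composition of continuous maps. I treat the case $\tau:\Omega\rightarrow[0,T]$ with $X$ quasi-continuous on $\Omega\times[0,T]$; the case $\tau:\Omega\rightarrow[0,\infty)$ with $X$ quasi-continuous on $\Omega\times[0,\infty)$ is identical, with $[0,T]$ replaced by $[0,\infty)$ throughout.

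First I would fix $\varepsilon>0$. Since $X$ is a quasi-continuous process on $\Omega\times[0,T]$, I choose an open set $G\subset\Omega$ with $c(G)<\frac{\varepsilon}{2}$ such that $X_{\cdot}(\cdot)$ is (jointly) continuous on $G^{c}\times[0,T]$. Since the stopping time $\tau$ is a quasi-continuous random variable, I choose an open set $H\subset\Omega$ with $c(H)<\frac{\varepsilon}{2}$ such that $\tau|_{H^{c}}$ is continuous. Set $O:=G\cup H$. Then $O$ is open, and by the subadditivity of the upper capacity, which follows at once from $P(A\cup B)\leq P(A)+P(B)$ and taking the supremum over $P\in\mathcal{P}$, we have $c(O)\leq c(G)+c(H)<\varepsilon$.

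It then remains to verify that $X_{\tau}$ is continuous on $O^{c}$. The key observation is that $O^{c}=G^{c}\cap H^{c}$, so the auxiliary map $\Phi:O^{c}\rightarrow G^{c}\times[0,T]$ given by $\Phi(\omega):=(\omega,\tau(\omega))$ is well defined: for $\omega\in O^{c}\subset G^{c}$ the first coordinate lies in $G^{c}$, and $\tau(\omega)\in[0,T]$. Moreover $\Phi$ is continuous on $O^{c}$, being the pairing of the identity map with $\tau$, the latter continuous on $H^{c}\supset O^{c}$; here I use that $\Omega$ is a metric space, so sequential continuity suffices and the two coordinates combine. Since $X$ is continuous on $G^{c}\times[0,T]$, the composition $X\circ\Phi$ is continuous on $O^{c}$. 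But $(X\circ\Phi)(\omega)=X_{\tau(\omega)}(\omega)=X_{\tau}(\omega)$, so $X_{\tau}|_{O^{c}}$ is continuous, and as $\varepsilon>0$ was arbitrary, $X_{\tau}$ is quasi-continuous.

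I do not expect any genuine obstacle here: the statement reduces to a composition-of-continuous-maps argument, and the only points that need care are that the \emph{joint} continuity of $X$ on $G^{c}\times[0,T]$ (rather than mere continuity in each variable) is exactly what process quasi-continuity supplies, so that $\omega\mapsto X_{\tau(\omega)}(\omega)$ can be evaluated continuously along the pair $\Phi$, together with the subadditivity of $c$ used to merge $G$ and $H$. Both are immediate from the definitions. For the unbounded case one should additionally note that $\tau$ is valued in $[0,\infty)$ rather than $[0,\infty]$, so $\tau(\omega)$ is always a genuine finite time at which $X$ is evaluated, and the same composition argument carries over verbatim on $\Omega\times[0,\infty)$.
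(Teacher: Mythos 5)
Your proposal is correct and follows essentially the same route as the paper: take the union of the two exceptional open sets and observe that on its complement $X_{\tau}$ is the composition of the continuous map $\omega\mapsto(\omega,\tau(\omega))$ with the jointly continuous $X$. The paper states this more tersely ("it is easy to see"), while you spell out the composition and the subadditivity of $c$, but the argument is identical.
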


\begin{proof}
We just prove the conclusion on $[0,T]$, and the proof for the other
part is similar. For any $\varepsilon>0$, we can find an open set
$G\subset\Omega$ such that $c(G^{c})\leq\varepsilon$ and on $G^{c}%
\times\lbrack0,T]$, $X$ is continuous. Moreover, we can also find an open set
$O\subset\Omega$ such that $\tau$ is continuous on $O^{c}$. Then on $G^{c}\cap
O^{c}=(G\cup O)^{c}$, it is easy to see that $X_{\tau}$ is continuous.
\end{proof}

\begin{proposition}\label{Myth4.3}
Let $X=(X_{t})_{t\in[0,\infty)}$ be a process. Then
 the process $(X_{\tau\wedge t})_{t\in[0,T]}$ $($$(X_{\tau\wedge t})_{t\in[0,\infty)}$ resp.$)$ is
quasi-continuous on $\Omega\times[0,T]$ $($$\Omega\times[0,\infty)$ resp.$)$ if $X$ is and $\tau$ is a
quasi-continuous stopping time.
\end{proposition}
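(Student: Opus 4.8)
The plan is to mimic the argument of Proposition \ref{Myth3.10}, the only new feature being that we must track the time variable jointly with $\omega$. I will treat the $[0,T]$ case in detail; the $[0,\infty)$ case is identical once one observes that the truncated stopping rule never reaches $+\infty$.

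First, fix $\varepsilon>0$. Since $X$ is quasi-continuous on $\Omega\times[0,T]$, there is an open set $G\subset\Omega$ with $c(G)<\varepsilon/2$ such that $X_\cdot(\cdot)$ is continuous on $G^c\times[0,T]$; since $\tau$ is a quasi-continuous stopping time, there is an open set $O\subset\Omega$ with $c(O)<\varepsilon/2$ such that $\tau$ is continuous on $O^c$. Put $H:=(G\cup O)^c=G^c\cap O^c$; then $G\cup O$ is open and by subadditivity $c(G\cup O)\leq c(G)+c(O)<\varepsilon$.

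The core step is to show that the map $(\omega,t)\mapsto X_{\tau(\omega)\wedge t}(\omega)$ is continuous on $H\times[0,T]$. I would factor it as the composition of $(\omega,t)\mapsto(\omega,\tau(\omega)\wedge t)$ followed by $(\omega,s)\mapsto X_s(\omega)$. On $H$ the function $\tau$ is continuous (being the restriction of its continuous version on $O^c$, as $H\subset O^c$), and the minimum $(a,t)\mapsto a\wedge t$ is jointly continuous, so $(\omega,t)\mapsto\tau(\omega)\wedge t$ is continuous on $H\times[0,T]$; moreover its values lie in $[0,T]$. Hence $(\omega,t)\mapsto(\omega,\tau(\omega)\wedge t)$ maps $H\times[0,T]$ continuously into $G^c\times[0,T]$ (using $H\subset G^c$). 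Composing with $X$, which is continuous on $G^c\times[0,T]$, yields the continuity of $(\omega,t)\mapsto X_{\tau(\omega)\wedge t}(\omega)$ on $H\times[0,T]$. Since $\varepsilon>0$ was arbitrary and $c(G\cup O)<\varepsilon$, this is exactly the quasi-continuity of $(X_{\tau\wedge t})_{t\in[0,T]}$.

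For the $[0,\infty)$ case the same factorization works. Here $\tau$ may a priori take the value $+\infty$, so the only point needing care---and the one I expect to be the main (minor) obstacle---is the joint continuity of $(a,t)\mapsto a\wedge t$ at points $(+\infty,t_0)$ with $t_0<\infty$. This holds because if $a_n\to+\infty$ and $t_n\to t_0<\infty$ then eventually $a_n>t_n$, forcing $a_n\wedge t_n=t_n\to t_0$; consequently $(a,t)\mapsto a\wedge t$ is a continuous map $[0,\infty]\times[0,\infty)\to[0,\infty)$ whose image is always finite. Thus $(\omega,t)\mapsto(\omega,\tau(\omega)\wedge t)$ again sends $H\times[0,\infty)$ continuously into $G^c\times[0,\infty)$, and composing with the continuous $X$ gives continuity of the stopped process on $H\times[0,\infty)$, which is the asserted quasi-continuity. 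Alternatively, one may deduce the $[0,\infty)$ statement from the $[0,T]$ statements for all $T$ via the characterization in Theorem \ref{Myth3.11}, but the direct argument above seems cleaner.
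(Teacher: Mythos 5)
Your proof is correct and follows essentially the same route as the paper, which omits the argument by declaring it ``similar to that of Proposition \ref{Myth3.10}'': one intersects the two exceptional sets and observes that $(\omega,t)\mapsto X_{\tau(\omega)\wedge t}(\omega)$ is a composition of continuous maps on $(G\cup O)^{c}\times[0,T]$. Your explicit treatment of the joint continuity of $(a,t)\mapsto a\wedge t$, including the case $a=+\infty$, simply fills in the details the paper leaves implicit.
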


\begin{proof}
The proof is similar to that of Proposition \ref{Myth3.10}, so we omit it.
\end{proof}

\begin{remark}
\upshape{
We remark that Proposition \ref{Myth3.10} is a special case of Proposition
\ref{Myth4.3} from Remark \ref{Re2.1}. But it should be beneficial to give
Proposition \ref{Myth3.10} explicitly as above due to its potential broader use.}
\end{remark}

\subsection{Application to $G$-expectation space}

For any given family of probability measures, the canonical process $B$ is
continuous in $(\omega,t)$, and thus is trivially quasi-continuous. Now we shall use Theorem \ref{Myth3.11} to obtain some non-trivial
quasi-continuous processes in the case that $\mathcal{P}$ is the $G$-expectation family, i.e., the upper expectation of $\mathcal{P}$ is a $G$-expectation. Let us first
briefly review the construction of $G$-expectation, and more details can be
found in \cite{D-H-P,Peng 1}.

Let $\Gamma$ be a bounded and closed subset of $\mathbb{S}_{+}(k)$, where $\mathbb{S}_{+}(k)$ is the collection of nonnegative $k\times k$ symmetric
matrices.
The $G$-expectation $\mathbb{\hat{E}}$ is the upper expectation of the
probability family%
\[
\mathcal{P=}\left\{  P:P\text{ is a probability measure on}\ \Omega
\ \text{such that }B\text{ is a martingale and }\frac{d\langle B\rangle
_{t}^{P}}{dt}\in\Gamma\right\}  ,
\]
under which the canonical process $B$ is called $G$-Brownian motion. In the
$G$-expectation case,$\ L_{C}^{1}(\Omega)$ is usually denoted by $L_{G}%
^{1}(\Omega)$ and the conditional $G$-expectation $\mathbb{\hat{E}}_{t}%
[\cdot]$ is well-defined on $L_{G}^{1}(\Omega).$

An adapted process $(M_{t})_{t\geq0}$ is called a $G$-martingale if for each
$s\leq t,$ $M_{t}\in L_{G}^{1}(\Omega_{t})$ and $\hat{\mathbb{E}}_{s}%
[M_{t}]=M_{s}$, where $\Omega_{t}=\{\omega_{\cdot\wedge t}:\omega\in\Omega\}$
and $L_{G}^{1}(\Omega_{t})$ is defined similar to $L_{G}^{1}(\Omega)$ with
$\Omega$ replaced by $\Omega_{t}$. Furthermore, a $G$-martingale $M$ is called
symmetric if $-M$ is also a $G$-martingale. We remark that, if $M$ is a
symmetric $G$-martingale, then it is a $\mathcal{P}$-martingale, i.e., it is a
martingale under each $P\in\mathcal{P}$. In general, a $G$-martingale is a
$\mathcal{P}$-supermartingale, see \cite{PZ,LPS} for more discussions.

Let $M_{G}^{0}(0,T)$ be the collection of processes in the form: for a given
partition $\{t_{0},\cdot\cdot\cdot,t_{N}\}$ of $[0,T]$,
\[
\eta_{t}(\omega)=\sum_{j=0}^{N-1}\xi_{j}(\omega)I_{[t_{j},t_{j+1})}(t),
\]
where $\xi_{j}\in C_{b}( \Omega_{t_{j}})$, $j=0,1,2,\cdot\cdot\cdot,N-1$. For
$p\geq1$ and $\eta\in M_{G}^{0}(0,T)$, let $\Vert\eta\Vert_{M_{G}^{p}%
}=\{\mathbb{\hat{E}}[\int_{0}^{T}|\eta_{s}|^{p}ds]\}^{1/p},$ $\Vert\eta
\Vert_{H_{G}^{p}}=\{\mathbb{\hat{E}}[(\int_{0}^{T}|\eta_{s}|^{2}%
ds)^{p/2}]\}^{1/p}$, and denote by $M_{G}^{p}(0,T)$ and $H_{G}^{p}(0,T)$ the
completions of $M_{G}^{0}(0,T)$ under the norms $\Vert\cdot\Vert_{M_{G}^{p}}$ and
$\Vert\cdot\Vert_{H_{G}^{p}}$, respectively. For each $1\leq i,j\leq k$, we denote by $\langle
B^{i},B^{j}\rangle$  the cross-variation process of $B$. Then for two processes $\eta\in
H_{G}^{p}(0,T)$ and $\xi\in M_{G}^{p}(0,T)$, the $G$-It\^{o} integrals
$\int_{0}^{t}\eta_{s}dB_{s}^{i}$ and $\int_{0}^{t}\xi_{s}d\langle B^{i}%
,B^{j}\rangle_{s}$, $\int_{0}^{t}\xi_{s}ds$ are well-defined, and $\int%
_{0}^{t}\eta_{s}dB_{s}^{i}$ is a symmetric $G$-martingale.

In the following of this subsection, we always assume that $\mathcal{P}$ is a
family of probability measures corresponding to $G$-expectation as above.

Theorem \ref{Myth3.11} contains the following three typical processes in the
$G$-expectation space.

\begin{proposition}
\label{Myth4.5} We have:

\begin{description}
\item[\rm{(i)}] $G$-martingale $M$ has a quasi-continuous modification on
$\Omega\times[0,\infty)$.

\item[\rm{(ii)}] If $\eta\in M_{G}^{1}(0,T)$ $(\cap_{T>0}M_{G}^{1}(0,T)$
resp.$)$, then the process $A_{t}:=\int_{0}^{t}\eta_{s}ds$ has a
quasi-continuous modification on $\Omega\times[0,T]$ $(\Omega\times[0,\infty)$
resp.$)$.

\item[\rm{(iii)}] If $\eta\in M_{G}^{1}(0,T)$ $(\cap_{T>0}M_{G}^{1}(0,T)$
resp.$)$, then the process $A_{t}:=\int_{0}^{t}\eta_{s}d\langle B^{i}%
,B^{j}\rangle_{s}$ has a quasi-continuous modification on $\Omega\times[0,T]$
$(\Omega\times[0,\infty)$ resp.$)$.
\end{description}
\end{proposition}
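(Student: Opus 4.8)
The plan is to reduce all three claims to the characterization in Theorem \ref{Myth3.11}: it suffices, on every finite horizon $[0,T]$, to produce genuinely continuous processes $X^{n}\in C(\Omega\times[0,T])$ with $c(\{\sup_{0\le t\le T}|X^{n}_{t}-X_{t}|>\varepsilon\})\to 0$ for each $\varepsilon>0$; part (ii) of that theorem then upgrades the finite-horizon conclusions to $\Omega\times[0,\infty)$, and since the limiting process it delivers agrees with the given process for each $t$ q.s., we obtain the asserted quasi-continuous modification (in fact a version). Two structural facts will be used repeatedly: the generating set $\Gamma\subset\mathbb{S}_{+}(k)$ is bounded, so that q.s. $|d\langle B^{i},B^{j}\rangle_{s}|\le\Lambda\,ds$ for a constant $\Lambda$; and integrals of simple integrands against $ds$ are continuous in $(\omega,t)$, while those against $d\langle B^{i},B^{j}\rangle_{s}$ are only quasi-continuous.

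Parts (ii) and (iii) are the routine ones. For (ii), choose simple $\eta^{n}_{s}=\sum_{j}\xi_{j}I_{[t_{j},t_{j+1})}(s)$ with $\xi_{j}\in C_{b}(\Omega_{t_{j}})$ and $\|\eta^{n}-\eta\|_{M_{G}^{1}}\to 0$; then $A^{n}_{t}:=\int_{0}^{t}\eta^{n}_{s}\,ds=\sum_{j}\xi_{j}\,[(t_{j+1}\wedge t)-(t_{j}\wedge t)]$ is continuous in $(\omega,t)$, and q.s. $\sup_{0\le t\le T}|A^{n}_{t}-A_{t}|\le\int_{0}^{T}|\eta^{n}_{s}-\eta_{s}|\,ds$, so $\hat{\mathbb{E}}[\sup_{0\le t\le T}|A^{n}_{t}-A_{t}|]\le\|\eta^{n}-\eta\|_{M_{G}^{1}}\to 0$ and Markov's inequality gives (\ref{Myeq3.5}). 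For (iii) the same estimate reads $\sup_{0\le t\le T}|A^{n}_{t}-A_{t}|\le\int_{0}^{T}|\eta^{n}_{s}-\eta_{s}|\,|d\langle B^{i},B^{j}\rangle_{s}|\le\Lambda\int_{0}^{T}|\eta^{n}_{s}-\eta_{s}|\,ds$, again forcing the $\hat{\mathbb{E}}$-bound to vanish. The one extra point is that here $A^{n}_{t}=\sum_{j}\xi_{j}(\langle B^{i},B^{j}\rangle_{t_{j+1}\wedge t}-\langle B^{i},B^{j}\rangle_{t_{j}\wedge t})$ is only quasi-continuous, since $\langle B^{i},B^{j}\rangle$ is not continuous in $\omega$. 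I would first record that $\langle B^{i},B^{j}\rangle$ has a quasi-continuous modification: by integration by parts $\langle B^{i},B^{j}\rangle_{t}=B^{i}_{t}B^{j}_{t}-\int_{0}^{t}B^{i}_{s}\,dB^{j}_{s}-\int_{0}^{t}B^{j}_{s}\,dB^{i}_{s}$, a continuous process minus two Itô integrals, the latter being quasi-continuous by approximating the integrand by simple processes and invoking the $G$-BDG inequality $\hat{\mathbb{E}}[\sup_{t}|\int\zeta\,dB|]\le C\|\zeta\|_{H_{G}^{1}}$ (note $B^{i}\in H_{G}^{2}(0,T)$). Thus each $A^{n}$ is quasi-continuous, so the forward direction of Theorem \ref{Myth3.11} supplies continuous $\tilde A^{n}$ with $c(\{\sup_{t}|\tilde A^{n}_{t}-A^{n}_{t}|>1/n\})\le 1/n$; combining with the $M_{G}^{1}$-estimate, $\tilde A^{n}\in C(\Omega\times[0,T])$ satisfies (\ref{Myeq3.5}) for $A$, and the theorem applies.

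The genuinely delicate case is (i), where I would avoid the decomposition $M=M_{0}+\int Z\,dB-K$ (its non-symmetric, singular part $K$ being exactly the hard piece) and argue through terminal values. On $[0,T]$ the martingale property gives $M_{t}=\hat{\mathbb{E}}_{t}[M_{T}]$ with $M_{T}\in L_{G}^{1}(\Omega_{T})$. Pick Lipschitz cylinder functions $\xi^{n}$ with $\|\xi^{n}-M_{T}\|_{L_{G}^{1}}\to 0$ and set $M^{n}_{t}:=\hat{\mathbb{E}}_{t}[\xi^{n}]$; for such $\xi^{n}$ the conditional $G$-expectation is given by nested $G$-heat flows and is continuous in $(\omega,t)$, so $M^{n}\in C(\Omega\times[0,T])$. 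Since $|M^{n}_{t}-M_{t}|=|\hat{\mathbb{E}}_{t}[\xi^{n}-M_{T}]|\le\hat{\mathbb{E}}_{t}[|\xi^{n}-M_{T}|]=:N_{t}$, and $N$ is a nonnegative $G$-martingale by the tower property, it is in particular a nonnegative $\mathcal{P}$-supermartingale. Hence for each $P\in\mathcal{P}$ Doob's weak-type $(1,1)$ inequality yields $\lambda\,P(\sup_{0\le t\le T}N_{t}>\lambda)\le E_{P}[N_{0}]=\|\xi^{n}-M_{T}\|_{L_{G}^{1}}$, the initial value $N_{0}$ being deterministic; taking the supremum over $P$ gives $c(\{\sup_{0\le t\le T}|M^{n}_{t}-M_{t}|>\lambda\})\le\lambda^{-1}\|\xi^{n}-M_{T}\|_{L_{G}^{1}}\to 0$. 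This is (\ref{Myeq3.5}), and Theorem \ref{Myth3.11} gives the quasi-continuous modification on each $[0,T]$ and then on $[0,\infty)$.

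I expect the main obstacle to be precisely this step in (i): converting $L_{G}^{1}$-convergence of terminal values into convergence of running maxima in capacity, \emph{uniformly} over the non-dominated family $\mathcal{P}$. The resolution is the observation that $N_{t}=\hat{\mathbb{E}}_{t}[|\xi^{n}-M_{T}|]$ is simultaneously a $P$-supermartingale for every $P$ with one and the same deterministic initial value, so a single weak-$(1,1)$ bound holds across all of $\mathcal{P}$; this replaces the classical Doob inequality and is what makes the argument go through without domination. A secondary, purely technical matter shared by (i) and (iii)—the joint continuity of $(\omega,t)\mapsto\hat{\mathbb{E}}_{t}[\xi^{n}]$ for cylinder $\xi^{n}$, and the quasi-continuity of $\langle B^{i},B^{j}\rangle$ via integration by parts—relies only on standard facts of the $G$-framework and should present no real difficulty.
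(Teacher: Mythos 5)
Your argument for (i) and (ii) is essentially the paper's own: approximate $M_T$ by Lipschitz cylinder functions $\xi^n$, dominate $|\hat{\mathbb{E}}_t[\xi^n]-M_t|$ by the nonnegative $G$-martingale $N_t=\hat{\mathbb{E}}_t[|\xi^n-M_T|]$, apply Doob's weak-type inequality under each $P\in\mathcal{P}$ with the deterministic initial value $N_0=\hat{\mathbb{E}}[|\xi^n-M_T|]$, and take the supremum over $\mathcal{P}$; for (ii), approximate $\eta$ by simple integrands and use $\hat{\mathbb{E}}[\sup_t|A^n_t-A_t|]\le\|\eta^n-\eta\|_{M_G^1}$. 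One step you gloss over in (i): to make $\sup_{0\le t\le T}N_t$ a well-defined measurable random variable and to invoke Doob's maximal inequality in continuous time, you first need a (right-)continuous modification of $N$ and of $M$ itself; the paper isolates exactly this point in Lemma \ref{Le4-9}, which produces a continuous modification of any $G$-martingale by running the same Doob estimate on the genuinely continuous processes $\hat{\mathbb{E}}_t[|\xi^n-\xi^m|]$ and extracting a q.s. uniformly convergent subsequence. You should make this preliminary step explicit; it is not circular, but it is needed before your displayed inequality makes sense.

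Part (iii) is where you take a genuinely different route. The paper simply notes that $\int_0^t\eta_s\,d\langle B^i,B^j\rangle_s-\int_0^t 2G(\tilde\eta_s)\,ds$ is a $G$-martingale and reduces (iii) to (i) and (ii) in one line. You instead approximate directly by simple integrands, confront the fact that $\langle B^i,B^j\rangle$ is not continuous in $\omega$, establish its quasi-continuity via integration by parts and the $G$-BDG inequality for the It\^{o} integrals $\int B^i\,dB^j$, and then replace the quasi-continuous approximants $A^n$ by continuous ones using the easy (Tietze) direction of Theorem \ref{Myth3.11} before applying the hard direction. This works, and the bound $|d\langle B^i,B^j\rangle_s|\le\Lambda\,ds$ from the boundedness of $\Gamma$ is correctly used. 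Your version is longer but more elementary and self-contained: it does not rely on the specific $G$-martingale structure of $\int\eta\,d\langle B^i,B^j\rangle-\int 2G(\tilde\eta)\,ds$, and as a byproduct it records the quasi-continuity of the cross-variation process itself, which is of independent use. The paper's version buys brevity at the cost of invoking that structural fact from the $G$-calculus.
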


\begin{proof}
(i). For each $T$, since $M_{T}\in L_{G}^{1}\left(  \Omega_{T}\right),$ according to  \cite{D-H-P}, we
can find $\xi^{n}\in L_{ip}(\Omega_{T})$ such that $\xi^{n}\rightarrow M_{T}$
under the norm $\mathbb{\hat{E}}[|\cdot|]$, where%
\[
L_{ip}(\Omega_{T}):=\{\varphi(B_{t_{1}},B_{t_{2}}-B_{t_{1}}\cdots,B_{t_{n}%
}-B_{t_{n-1}}):n\in\mathbb{N},0\leq t_{1}<t_{2}\cdots<t_{n}\leq T,\varphi\in
C_{b.Lip}(\mathbb{R}^{k\times n})\}.
\]
From the definition of conditional $G$-expectation (c.f. Chapter III of
\cite{Peng 1}), we can see that the process $\mathbb{\hat{E}}_{t}[\xi
^{n}]$ is continuous on $\Omega\times[0,T].$ By the following Lemma \ref{Le4-9}, we can take the continuous modifications of $G$-martingales $M_t=\mathbb{\hat{E}%
}_{t}[M_{T}]$ and $\mathbb{\hat{E}}_{t}[|\xi^{n}-M_{T}%
|]$. Since for any given
$P\in\mathcal{P}$, $\mathbb{\hat{E}}_{t}[|\xi^{n}-M_{T}|]$ is a
supermartingale, we can apply the Doob's martingale
inequality (see, e.g., Theorem 2.42 of \cite{HWY}) to obtain that for each $\varepsilon>0,$
\begin{align*}
P(\{\sup_{0\leq t\leq T}|\mathbb{\hat{E}}_{t}[\xi^{n}]-M_{t}| >\varepsilon\})
&  =P(\{\sup_{0\leq t\leq T}|\mathbb{\hat{E}}_{t}[\xi^{n}]-\mathbb{\hat{E}%
}_{t}[M_{T}]|>\varepsilon\})\\
&  \leq P(\{\sup_{0\leq t\leq T}\mathbb{\hat{E}}_{t}[|\xi^{n}-M_{T}|]>\varepsilon\})\\
&  \leq\frac{1}{\varepsilon}\mathbb{\hat{E}}[|\xi^{n}-M_{T}|].
\end{align*}
Taking supremum over $P\in\mathcal{P}$, we obtain
\[
c(\{\sup_{0\leq t\leq T}|\mathbb{\hat{E}}_{t}[\xi^{n}]-M_{t}|>\varepsilon
\})\leq\frac{1}{\varepsilon}\mathbb{\hat{E}}[|\xi^{n}-M_{T}|]\rightarrow
0,\ \ \ \text{as}\ n\rightarrow\infty.
\]
Now applying Theorem \ref{Myth3.11}, we deduce that $M$ is quasi-continuous.

(ii). We can find a sequence $\eta^{n}\in M_{G}^{0}(0,T)$ such that $\eta
^{n}\rightarrow\eta$ in $M_{G}^{1}(0,T)$. Then the conclusion follows from the
observation that the process $(\int_{0}^{t}\eta^{n}_{s}ds)_{t\geq0}$ is
continuous on $\Omega\times[0,T]$ and
\[
\mathbb{\hat{E}}[\sup_{0\leq t\leq T}|\int_{0}^{t}\eta_{s}^{n}ds-\int_{0}%
^{t}\eta_{s}ds|]\leq\mathbb{\hat{E}}[|\int_{0}^{T}|\eta_{s}^{n}-\eta
_{s}|ds]\rightarrow0,\ \ \text{ as }n\rightarrow\infty.
\]

(iii). Note that $M_{t}:=\int_{0}^{t}\eta_{s}d\langle B^{i},B^{j}\rangle
_{s}-\int_{0}^{t}2G(\tilde{\eta}_{s})ds$ is a $G$-martingale (see Chapter IV
of \cite{Peng 1}), where $\tilde{\eta}=(\tilde{\eta}^{ml})_{m,l=1}^{k}$ is
defined by
\[
\tilde{\eta}_{t}^{ml}=%
\begin{cases}
\eta_{t};\quad m=i\ \text{and}\ l=j,\\
0;\ \ \ \ \text{otherwise}.
\end{cases}
\]
Then we deduce the result from (i) and (ii).
\end{proof}

\begin{remark}
\upshape{
We remark that the result (i) on finite interval $[0,T]$ has already been
obtained in \cite{Song1}. Compared with this, our proof is simple and
different, and moreover, does not rely on the non-degeneracy assumption on
$\Gamma$.}
\end{remark}

In the above proof, the following continuity modification theorem for  $G$-martingales $M$ is needed. It corresponds to the classical fact in the linear stochastic analysis that every martingale for Brownian motion filtration has a continuous modification, and partial result under the additional assumption that $M_T\in L^p_G(\Omega)$ for $T>0$, for some $p>1$, on this direction has already been proved as a byproduct in the $G$-martingale representation theorem (see \cite{STZ,Song1}).
\begin{lemma}\label{Le4-9}
	Any $G$-martingale $M$ has a continuous modification.
\end{lemma}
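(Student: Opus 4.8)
The plan is to reduce the lemma to an approximation argument built on the conditional $G$-expectation and the classical Doob maximal inequality, and then to invoke the characterization Theorem \ref{Myth3.11}. Fix $T>0$. Directly from the definition of a $G$-martingale (taking $t=T$ in $\hat{\mathbb{E}}_s[M_t]=M_s$) we have $M_t=\hat{\mathbb{E}}_t[M_T]$ for every $t\in[0,T]$, with $M_T\in L^1_G(\Omega_T)$. Since $L_{ip}(\Omega_T)$ is dense in $L^1_G(\Omega_T)$ (see \cite{D-H-P}), choose $\xi^n\in L_{ip}(\Omega_T)$ with $\hat{\mathbb{E}}[|\xi^n-M_T|]\to0$ and set $M^n_t:=\hat{\mathbb{E}}_t[\xi^n]$. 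The reason for approximating inside $L_{ip}(\Omega_T)$, rather than in $L^1_G(\Omega_T)$, is that each $M^n_\cdot(\cdot)$ is genuinely continuous on $\Omega\times[0,T]$ by the explicit structure of the conditional $G$-expectation of Lipschitz cylinder functionals (Chapter III of \cite{Peng 1}); the same remark applies to $\hat{\mathbb{E}}_t[|\xi^n-\xi^m|]$, because $|\xi^n-\xi^m|\in L_{ip}(\Omega_T)$ as well.

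Next I would show that $(M^n)$ is Cauchy in capacity uniformly in time. Using the $1$-Lipschitz property $|\hat{\mathbb{E}}_t[\xi^n]-\hat{\mathbb{E}}_t[\xi^m]|\le\hat{\mathbb{E}}_t[|\xi^n-\xi^m|]$ together with the fact that the nonnegative continuous process $\hat{\mathbb{E}}_t[|\xi^n-\xi^m|]$ is a $P$-supermartingale for each $P\in\mathcal{P}$ (a $G$-martingale is a $\mathcal{P}$-supermartingale), Doob's maximal inequality (see Theorem 2.42 of \cite{HWY}) gives, for each $P$,
\[
P\big(\sup_{0\le t\le T}|M^n_t-M^m_t|>\varepsilon\big)\le\frac{1}{\varepsilon}\,\hat{\mathbb{E}}[|\xi^n-\xi^m|].
\]
Taking the supremum over $P\in\mathcal{P}$ and using $\hat{\mathbb{E}}[|\xi^n-\xi^m|]\le\hat{\mathbb{E}}[|\xi^n-M_T|]+\hat{\mathbb{E}}[|\xi^m-M_T|]\to0$ yields $c(\sup_{0\le t\le T}|M^n_t-M^m_t|>\varepsilon)\to0$.

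With this estimate in hand I would pass to the limit. By Theorem \ref{Myth3.11}(i) (equivalently, by extracting a rapidly converging subsequence and applying the Borel-Cantelli lemma for capacity, exactly as in its proof) there is a process $\tilde{M}^T$, quasi-continuous on $\Omega\times[0,T]$ and in particular continuous in $t$ for q.s. $\omega$, with $c(\sup_{0\le t\le T}|M^{n_k}_t-\tilde{M}^T_t|>\varepsilon)\to0$ along a subsequence. For each fixed $t$ we have $M^n_t\to\hat{\mathbb{E}}_t[M_T]=M_t$ in $L^1_G$, hence in capacity, so $\tilde{M}^T_t=M_t$ q.s.; thus $\tilde{M}^T$ is a continuous-in-$t$ modification of $M$ on $[0,T]$. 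Finally I would glue the modifications $\tilde{M}^T$, $T=1,2,\dots$, into a single process on $[0,\infty)$ exactly as in the proof of Theorem \ref{Myth3.11}(ii): the restrictions agree q.s. on overlapping intervals, so after discarding the corresponding polar set one obtains one process, continuous in $t$ q.s., that coincides with $M_t$ q.s. for every $t$.

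The step I expect to be the main obstacle is ensuring that Doob's inequality may be applied at all, namely that the auxiliary supermartingales $\hat{\mathbb{E}}_t[|\xi^n-\xi^m|]$ already possess continuous paths, so that the pathwise supremum $\sup_{0\le t\le T}$ is well behaved and measurable and so that the capacity estimate above is legitimate. This is precisely why the approximation is carried out within $L_{ip}(\Omega_T)$: approximating in $L^1_G(\Omega_T)$ would force us to presuppose a continuous modification of these supermartingales, which is the very conclusion we are proving, and would render the argument circular.
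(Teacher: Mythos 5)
Your proof is correct and follows essentially the same route as the paper's: approximate $M_T$ by Lipschitz cylinder functionals $\xi^n$, use the pathwise continuity of $\hat{\mathbb{E}}_t[\xi^n]$ and $\hat{\mathbb{E}}_t[|\xi^n-\xi^m|]$ together with Doob's maximal inequality under each $P\in\mathcal{P}$ to get uniform-in-time Cauchyness in capacity, and then extract a q.s.\ uniformly convergent subsequence whose limit is the continuous modification. Your explicit remark on why the auxiliary supermartingales must come from $L_{ip}(\Omega_T)$ to avoid circularity, and your gluing over $T=1,2,\dots$, only make explicit what the paper leaves implicit.
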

\begin{proof}
We employ the notation in the proof of Proposition \ref{Myth4.5} (i). For any given $T>0$ and $M_{T}\in L_{G}^{1}\left(  \Omega_{T}\right)$, there exists some $\xi^{n}\in L_{ip}(\Omega_{T})$ such that $\xi^{n}\rightarrow M_{T}$
under the norm $\mathbb{\hat{E}}[|\cdot|]$. Then by the definition of conditional $G$-expectation, $t\rightarrow\mathbb{\hat{E}}_{t}[\xi
^{n}]$ and $t\rightarrow\mathbb{\hat{E}}_{t}[|\xi
^{n}-\xi
^{m}|]$ are continuous on $[0,T],$ for each $\omega\in\Omega$, and by a similar calculation as in the proof of Proposition \ref{Myth4.5} (i), we have
	\[
	c(\{\sup_{0\leq t\leq T}|\mathbb{\hat{E}}_{t}[\xi
	^{n}]-\mathbb{\hat{E}}_{t}[\xi
	^{m}]|>\varepsilon\})\rightarrow
	0,\ \ \ \text{as}\ n,m\rightarrow\infty.
	\]
	Now from a similar analysis as in the proof of Theorem \ref{Myth3.11} (i), we can extract a q.s uniformly convergent subsequence $\mathbb{\hat{E}}_{t}[\xi
	^{n_k}]$ such that $t\rightarrow\limsup_{k\rightarrow\infty}\mathbb{\hat{E}}_{t}[\xi
	^{n_k}]$ is continuous and it is a modification of $M$.
\end{proof}

A $G$-martingale stopped at a quasi-continuous stopping time is still a $G$-martingale.

\begin{corollary}\label{Myth4-10}
Let $\tau$ be a quasi-continuous stopping time. If $(M_{t})_{t\geq0}$ is a
$G$-martingale $($symmetric $G$-martingale resp.$)$, then $(M_{t\wedge\tau
})_{t\geq0}$ is still a $G$-martingale $($symmetric $G$-martingale resp.$)$.
\end{corollary}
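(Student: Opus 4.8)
The plan is to verify, for the stopped process $N_t:=M_{\tau\wedge t}$, the two defining properties of a $G$-martingale: that (a) $N_t\in L_G^1(\Omega_t)$ for each $t$, and that (b) $\hat{\mathbb{E}}_s[N_t]=N_s$ for all $s\le t$; in the symmetric case one runs the same two steps for $-M$ as well. The regularity input comes directly from the preceding results. By Proposition \ref{Myth4.5}(i) the $G$-martingale $M$ has a quasi-continuous modification on $\Omega\times[0,\infty)$, and since $\tau$ is a quasi-continuous stopping time, Proposition \ref{Myth4.3} shows that $N=(M_{\tau\wedge t})_{t\ge0}$ is again quasi-continuous on $\Omega\times[0,\infty)$; in particular, by Remark \ref{Re2.1}, each $N_t$ is a quasi-continuous random variable, which is exactly the first half of the hypothesis of the characterization Theorem \ref{LG-ch}.

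For step (a) it then remains to check $\lim_{m\to\infty}\hat{\mathbb{E}}[|N_t|I_{\{|N_t|\ge m\}}]=0$. Fixing $t$ and writing $A_m:=\{|N_t|\ge m\}\in\mathcal{F}_{\tau\wedge t}$, I would argue $P$ by $P$ using classical optional stopping at the bounded time $\tau\wedge t\le t$. In the symmetric case $M$ is a $\mathcal{P}$-martingale, so $|N_t|\le E_P[|M_t|\,|\,\mathcal{F}_{\tau\wedge t}]$ and hence $E_P[|N_t|I_{A_m}]\le E_P[|M_t|I_{A_m}]\le \hat{\mathbb{E}}[|M_t|I_{\{|M_t|\ge K\}}]+K\,c(A_m)$ for any threshold $K$. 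Since $c(A_m)\le m^{-1}\hat{\mathbb{E}}[|M_t|]\to0$ and $M_t\in L_G^1$ forces $\hat{\mathbb{E}}[|M_t|I_{\{|M_t|\ge K\}}]\to0$, this bound is uniform in $P$ and gives $N_t\in L_G^1(\Omega_t)$ via Theorem \ref{LG-ch}. For a general $G$-martingale the same estimate controls the negative part, because $-M$ is a $\mathcal{P}$-submartingale and therefore so is $M^-=(-M)^+$; the positive part is the delicate one (see below).

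For the martingale identity (b) I would again work $P$ by $P$ and then aggregate. Under each $P$, optional stopping yields $E_P[N_t\,|\,\mathcal{F}_s]=N_s$ in the symmetric ($\mathcal{P}$-martingale) case and $E_P[N_t\,|\,\mathcal{F}_s]\le N_s$ in the general ($\mathcal{P}$-supermartingale) case. Using the standard representation of the conditional $G$-expectation as an essential supremum, $\hat{\mathbb{E}}_s[N_t]=\operatorname*{ess\,sup}_{P'}E_{P'}[N_t\,|\,\mathcal{F}_s]$ over the measures $P'\in\mathcal{P}$ agreeing with a fixed $P$ up to time $s$, the symmetric case is immediate: all conditional expectations equal the common $\mathcal{F}_s$-value $N_s$, so $\hat{\mathbb{E}}_s[N_t]=N_s$, and applying this to $-M$ too shows $(N_t)$ is a symmetric $G$-martingale. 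The passage from the $P$-wise statements to statements under $\hat{\mathbb{E}}_\cdot$ is justified by the continuity of the conditional $G$-expectation on $L_G^1$, already guaranteed by step (a).

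The step I expect to be the main obstacle is the non-symmetric case, both for the $L_G^1$-membership and for the lower bound $\hat{\mathbb{E}}_s[N_t]\ge N_s$, precisely because $\mathcal{P}$ is non-dominated. The positive part $M^+$ is neither a $\mathcal{P}$-sub- nor a $\mathcal{P}$-supermartingale, so the uniform (over $\mathcal{P}$) integrability of $\{M_{\tau\wedge t}^+\}$ is not automatic; and the supermartingale inequalities alone give only $\hat{\mathbb{E}}_s[N_t]\le N_s$, so one must feed back the genuine identity $\hat{\mathbb{E}}_s[M_t]=M_s$ (trivially on $\{\tau\le s\}$, where $N_t=N_s$ is $\mathcal{F}_s$-measurable, and by comparing $\hat{\mathbb{E}}_s[N_t]$ with $\hat{\mathbb{E}}_s[M_t]=M_s=N_s$ on $\{\tau>s\}$). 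When higher integrability is available, a clean way around both difficulties is the $G$-martingale decomposition $M_t=M_0+\int_0^t\eta_s\,dB_s-K_t$ with $K$ continuous increasing and $-K$ a $G$-martingale: then $N_t=\bigl(M_0+\int_0^{\tau\wedge t}\eta_s\,dB_s\bigr)-K_{\tau\wedge t}$, the stochastic-integral term is a stopped symmetric $G$-martingale (a $G$-It\^o integral of $\eta I_{[0,\tau]}$, handled by the symmetric argument), while $0\le K_{\tau\wedge t}\le K_t\in L_G^1$ supplies both the missing integrability and, through the decreasing $G$-martingale property of $-K$, the required identity.
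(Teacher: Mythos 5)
Your argument is complete only in the symmetric case, and there it is essentially sound: the quasi-continuity of $M_{\tau\wedge t}$ via Propositions \ref{Myth4.5} and \ref{Myth3.10}/\ref{Myth4.3}, the $P$-by-$P$ optional stopping at the bounded time $\tau\wedge t$, the uniform-integrability estimate through $|M_{\tau\wedge t}|\le E_P[|M_t|\,|\,\mathcal{F}_{\tau\wedge t}]$, and the essential-supremum representation of $\hat{\mathbb{E}}_s$ all fit together. But for a general (non-symmetric) $G$-martingale you have, by your own account, only proved $\hat{\mathbb{E}}_s[M_{\tau\wedge t}]\le M_{\tau\wedge s}$ and controlled only the negative part of $M_{\tau\wedge t}$; the sketch of how to ``feed back'' the identity $\hat{\mathbb{E}}_s[M_t]=M_s$ on $\{\tau>s\}$ does not close, because on that event $M_{\tau\wedge t}\ne M_t$ and no $P$-wise inequality relates $\hat{\mathbb{E}}_s[M_{\tau\wedge t}]$ to $\hat{\mathbb{E}}_s[M_t]$ from below. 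Your proposed repair via the decomposition $M_t=M_0+\int_0^t\eta_s\,dB_s-K_t$ requires $M_T\in L_G^p$ for some $p>1$ (and more, for the full decomposition), whereas the corollary is stated for arbitrary $G$-martingales with $M_t\in L_G^1(\Omega_t)$; so as written the proposal does not prove the statement.

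The missing ingredient is the optional sampling theorem for $G$-martingales under the conditional $G$-expectation $\hat{\mathbb{E}}_\sigma$ defined at stopping times (from \cite{NH,HJL}): for any stopping time $\sigma\le t$ one has $\hat{\mathbb{E}}_\sigma[M_t]=M_\sigma$, hence $M_{\tau\wedge t}=\hat{\mathbb{E}}_{\tau\wedge t}[M_t]$. This single identity is what the paper uses to settle both of your obstacles at once for general $G$-martingales: the tower-type property $\hat{\mathbb{E}}[\hat{\mathbb{E}}_{\tau\wedge t}[|M_t|]I_{A_N}]=\hat{\mathbb{E}}[|M_t|I_{A_N}]$ (Proposition 19 of \cite{D-H-P}) gives the uniform integrability of $M_{\tau\wedge t}$ itself, not just its negative part, and the nested identity $\hat{\mathbb{E}}_t[\hat{\mathbb{E}}_{(s\wedge\tau)\vee t}[M_s]]=\hat{\mathbb{E}}_t[M_s]=M_t$ on $\{\tau\ge t\}$, combined with the trivial case on $\{\tau<t\}$, yields the exact martingale equality rather than the one-sided supermartingale bound. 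If you do not want to invoke that theorem, you must either restrict to symmetric $G$-martingales or add the integrability hypotheses your decomposition route needs.
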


\begin{proof}
We just prove the $G$-martingale case, from which the symmetric case follows
by applying the conclusion to $M$ and $-M$.

For any $t$ and stopping time $\sigma\leq t,$ let $\mathbb{\hat{E}}_{\sigma}$
be the conditional $G$-expectation at $\sigma$ as defined in \cite{NH,HJL}. By
the optional sampling theorem for $G$-martingales (see \cite{NH}), we have
\begin{equation}
\mathbb{\hat{E}}_{\sigma}[M_{t}]=M_{\sigma}. \label{Myeq4.6}%
\end{equation}

From Proposition \ref{Myth3.10}, the random variable $M_{t\wedge\tau}$ is
quasi-continuous. Moreover, note that  from (\ref{Myeq4.6}) and the properties
of conditional $G$-expectation,
\[
c(\{|M_{t\wedge\tau}|>N\})\leq\frac{\mathbb{\hat{E}}[|M_{t\wedge\tau}|]}%
{N}=\frac{\mathbb{\hat{E}}[|\mathbb{\hat{E}}_{t\wedge\tau}\mathbb{[}M_{t}%
]|]}{N}\leq\frac{\mathbb{\hat{E}}[\mathbb{\hat{E}}_{t\wedge\tau}[|M_{t}%
|]]}{N}=\frac{\mathbb{\hat{E}}[|M_{t}|]}{N}\rightarrow0,\ \ \text{ as
}N\rightarrow\infty.
\]
Then applying Proposition 19 in \cite{D-H-P} yields that
\begin{align*}
\mathbb{\hat{E}}[|M_{t\wedge\tau}|I_{\{|M_{t\wedge\tau}|>N\}}]  &
=\mathbb{\hat{E}}[|\mathbb{\hat{E}}_{t\wedge\tau}\mathbb{[}M_{t}%
]|I_{\{|M_{t\wedge\tau}|>N\}}]\\
&  \leq\mathbb{\hat{E}}[\mathbb{\hat{E}}_{t\wedge\tau}[|M_{t}|]I_{\{|M_{t\wedge\tau}|>N\}}]\\
&  =\mathbb{\hat{E}}[|M_{t}|I_{\{|M_{t\wedge\tau}|>N\}}]\\
&  \rightarrow0, \ \ \text{ as }N\rightarrow\infty.
\end{align*}
Therefore, by the characterization Theorem \ref{LG-ch}, we deduce that
$M_{t\wedge\tau}\in L_{G}^{1}\left(  \Omega_{t}\right)  .$

Now it remains to show the martingale property. Indeed, from (\ref{Myeq4.6})
and the properties of conditional $G$-expectation, for each $s\geq t,$ we
have
\begin{align*}
\hat{\mathbb{E}}_{t}[M_{s\wedge\tau}]  &  =\hat{\mathbb{E}}_{t}[M_{s\wedge
\tau}I_{\{\tau\geq t\}}]+\hat{\mathbb{E}}_{t}[M_{s\wedge\tau}I_{\{\tau<t\}}]\\
&  =\hat{\mathbb{E}}_{t}[M_{(s\wedge\tau)\vee t}]I_{\{\tau\geq t\}}%
+\hat{\mathbb{E}}_{t}[M_{\tau\wedge t}]I_{\{\tau<t\}}\\
&  =\hat{\mathbb{E}}_{t}[\hat{\mathbb{E}}_{_{(s\wedge\tau)\vee t}}%
[M_{s}]]I_{\{s\wedge\tau\geq t\}}+M_{\tau\wedge t}I_{\{\tau<t\}}\\
&  =\hat{\mathbb{E}}_{t} [M_{s}]I_{\{s\wedge\tau\geq t\}}+M_{\tau\wedge
t}I_{\{\tau<t\}}\\
&  =M_{t}I_{\{s\wedge\tau\geq t\}}+M_{\tau}I_{\{s\wedge\tau<t\}}\\
&  =M_{\tau\wedge t}.
\end{align*}
This completes the proof.
\end{proof}

We close this section with a regularity theorem for the stopping of stochastic integrals.

\begin{proposition}
\label{Myth3.6}Let $\tau\leq T$ be a quasi-continuous stopping time. Then for
each $p\geq1$, we have
\begin{equation}
I_{[0,\tau]}\in M_{G}^{p}(0,T).
\end{equation}

\end{proposition}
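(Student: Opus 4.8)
The plan is to exhibit $I_{[0,\tau]}$ as the $\Vert\cdot\Vert_{M_G^p}$-limit of a sequence of processes that already lie in $M_G^p(0,T)$; since $M_G^p(0,T)$ is by construction a complete space, this yields the desired membership. The naive candidate is the adapted step process $\eta^n=\sum_j I_{\{\tau\ge t_j\}}I_{[t_j,t_{j+1})}$ along a partition $0=t_0<\cdots<t_N=T$ of mesh tending to $0$: a pointwise estimate shows that $\int_0^T|\eta^n_s-I_{[0,\tau]}(s)|^p\,ds$ is bounded by the mesh (the two processes differ only on the single subinterval containing $\tau$), whence $\eta^n\to I_{[0,\tau]}$ in $M_G^p$. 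The obstacle is that the coefficients $I_{\{\tau\ge t_j\}}$ need not belong to $\overline{C_b(\Omega_{t_j})}=L_C^p(\Omega_{t_j})$: the indicator of a level set $\{\tau\ge t_j\}$ is quasi-continuous only when $c(\{\tau=t_j\})=0$, and under a non-dominated $\mathcal{P}$ this can fail for an uncountable set of times $t_j$ (disjoint level sets may each carry positive capacity), so one cannot guarantee that ``good'' partition points are dense. Hence $\eta^n$ itself need not lie in $M_G^p(0,T)$.

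To reconcile adaptedness with continuity I would replace the jump coefficient by a backward-in-time ramp. For $n\ge1$ set $\varphi_n(x):=\min\{1,(nx+1)^{+}\}$ and define $\theta^n_t:=\varphi_n(\tau-t)$, which coincides with the backward average $n\int_{(t-1/n)\vee0}^{t}I_{[0,\tau]}(s)\,ds$. Because $\varphi_n$ is continuous and non-decreasing, for $c\in(0,1)$ one has $\{\theta^n_t>c\}=\{\tau>t-(1-c)/n\}\in\mathcal{F}_{t-(1-c)/n}\subset\mathcal{F}_t$ (and the cases $c\le0$, $c\ge1$ are trivial), so $\theta^n$ is adapted and each $\theta^n_{t_j}$ is $\mathcal{F}_{t_j}$-measurable. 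Moreover $\theta^n$ is $n$-Lipschitz in $t$, and since $\theta^n_t=\varphi_n(\tau-t)$ is the composition of the continuous function $\varphi_n$ with the quasi-continuous $\tau$, it is a quasi-continuous process; by Remark \ref{Re2.1} each time-slice $\theta^n_{t_j}$ is a bounded quasi-continuous random variable. Finally $\theta^n_t$ and $I_{[0,\tau]}(t)$ differ only for $t\in(\tau,\tau+1/n)$, so $\int_0^T|\theta^n_t-I_{[0,\tau]}(t)|^p\,dt\le 1/n$ and therefore $\theta^n\to I_{[0,\tau]}$ in $M_G^p(0,T)$ as $n\to\infty$.

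It then remains to verify $\theta^n\in M_G^p(0,T)$ for each fixed $n$. Here I would discretize in time: along a partition of mesh $\delta$ the $n$-Lipschitz continuity gives $\Vert\theta^n-\sum_j\theta^n_{t_j}I_{[t_j,t_{j+1})}\Vert_{M_G^p}^p\le (n\delta)^p\,T\to0$ as $\delta\to0$. Each coefficient $\theta^n_{t_j}$ is bounded, $\mathcal{F}_{t_j}$-measurable and quasi-continuous, so the tail condition in Theorem \ref{LG-ch} holds trivially and $\theta^n_{t_j}\in L_C^p(\Omega_{t_j})$, i.e.\ it is the $\Vert\cdot\Vert_p$-limit of elements of $C_b(\Omega_{t_j})$. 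Substituting these approximations into the step process places $\sum_j\theta^n_{t_j}I_{[t_j,t_{j+1})}$ in $M_G^p(0,T)$; letting $\delta\to0$ gives $\theta^n\in M_G^p(0,T)$, and letting $n\to\infty$ gives $I_{[0,\tau]}\in M_G^p(0,T)$. The one genuinely delicate point is the middle step: the backward ramp is precisely the device that keeps the coefficients adapted while turning them into continuous functions of $\tau$, which is exactly where the quasi-continuity of $\tau$ enters and which circumvents the level-set pathology that defeats the naive step-process approximation.
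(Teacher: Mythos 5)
Your proof is correct and follows essentially the same strategy as the paper: approximate $I_{[0,\tau]}$ by adapted step processes whose coefficients are continuous functions of the quasi-continuous $\tau$ (hence quasi-continuous and, being bounded and $\mathcal{F}_{t_j}$-measurable, in $L_G^p(\Omega_{t_j})$ by Theorem \ref{LG-ch}), the only difference being that you build the backward-in-time mollification from an explicit Lipschitz ramp $\varphi_n(\tau-t)$ and then discretize in $t$, whereas the paper does both steps at once with a partition of unity $\{\phi_i^k\}$ applied to $\tau$. The one inessential slip is the claimed identity of $\varphi_n(\tau-t)$ with the backward average $n\int_{(t-1/n)\vee 0}^{t}I_{[0,\tau]}(s)\,ds$, which fails for $t<1/n$; this plays no role in the argument.
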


\begin{proof}
Without loss of generality, we assume that $\tau\leq T$. For each
$k\in\mathbb{N}$, by the partition of unit theorem, we can find a sequence of
continuous functions $\{\phi_{i}^{k}\}_{i=1}^{n_{k}}$ with $n_{k}=2^{k}+1$
such that:

\begin{description}
\item[\rm{(i)}] the diameter of support $\lambda($supp$(\phi_{i}^{k}%
))\leq\frac{2}{2^{k}}$ and $0\leq\phi_{i}^{k}\leq1$;

\item[\rm{(ii)}] $\sum_{i=1}^{n_{k}}\phi_{i}^{k}(t)=1,\text{ for each
}t\in\lbrack0,1]$;

\item[\rm{(iii)}] $\phi_{i}^{k}(t)>0$ for some $t\in\lbrack\frac
{i-1}{2^{k}},\frac{i}{2^{k}})$ but $\phi_{i}^{k}(t)\equiv0$ for $t\geq\frac
{i}{2^{k}},$ for $1\leq i\leq n_{k}+1.$
\end{description}

It is easy to check that
\[
\sum_{i=1}^{n_{k}}I_{[0,\frac{i}{2^{k}}]}\phi_{i}^{k}(\tau)\rightarrow
I_{[0,\tau]}\ \text{in}\ M_{G}^{p}(0,T),\ \ \text{ as }k\rightarrow\infty.
\]
Then it remains to show that $\sum_{i=1}^{n_{k}}I_{[0,\frac{i}{2^{k}}]}%
\phi_{i}^{k}(\tau)\in M_{G}^{p}(0,T)$. A rewriting gives
\begin{align*}
\sum_{i=1}^{n_{k}}I_{[0,\frac{i}{2^{k}}]}\phi_{i}^{k}(\tau)=  &  \sum
_{i=1}^{n_{k}}(\sum_{j=1}^{i}I_{(\frac{j-1}{2^{k}},\frac{j}{2^{k}}]}%
+I_{\{0\}})\phi_{i}^{k}(\tau)\\
=  &  \sum_{j=1}^{n_{k}}\sum_{i=j}^{n_{k}}I_{(\frac{j-1}{2^{k}},\frac{j}%
{2^{k}}]}\phi_{i}^{k}(\tau)+\sum_{i=1}^{n_{k}}I_{\{0\}}\phi_{i}^{k}(\tau)\\
=  &  \sum_{j=1}^{n_{k}}I_{(\frac{j-1}{2^{k}},\frac{j}{2^{k}}]}\sum
_{i=j}^{n_{k}}\phi_{i}^{k}(\tau)+I_{\{0\}}.
\end{align*}
Noting that $1\geq\sum_{i=j}^{n_{k}}\phi_{i}^{k}\geq I_{[\frac{j-1}{2^{k}}%
,1]}$, then
\[
\sum_{i=j}^{n_{k}}\phi_{i}^{k}(\tau)=\sum_{i=j}^{n_{k}}\phi_{i}^{k}(\tau
\wedge\frac{j-1}{2^{n}})I_{[\tau\leq\frac{j-1}{2^{k}}]}+I_{[\tau>\frac
{j-1}{2^{k}}]}\in\mathcal{F}_{\frac{j-1}{2^{k}}}.
\]
Since $\phi^{k}_{i}$ is continuous, thus $\sum_{i=j}^{n_{k}}\phi_{i}^{k}%
(\tau)$ is quasi-continuous. Then by applying Theorem \ref{LG-ch}, we deduce
that $\sum_{i=j}^{n_{k}}\phi_{i}^{k}(\tau)\in L_{G}^{p}(\Omega_{\frac
{j-1}{2^{k}}}) $. This completes the proof.
\end{proof}

\begin{remark}\label{Rm4-11}
\upshape{
	\begin{description}
		\item[\rm{ (i)}]
Similar argument shows that $I_{[0,\tau]}\in H_{G}^{p}(0,T)$ under the same assumptions.
\item[\rm{ (ii)}]
 One of the referees provides an alternative short and novel proof to the above proposition.
Indeed, first from Proposition \ref{Myth3.10}, the random variable $B_\tau$ is quasi-continuous. Then by a similar analysis as in the proof of Corollary \ref{Myth4-10}, we have $$
\hat{\mathbb{E}}[|B_\tau|^{p+2}]=\hat{\mathbb{E}}[|\hat{\mathbb{E}}_{\tau}[B_T]|^{p+2}]\leq \hat{\mathbb{E}}[|B_T|^{p+2}]<\infty.
$$ Thus  $B_\tau\in L_G^{p+1}(\Omega_{T})$. Now we can apply the $G$-martingale representation theorem (see \cite{STZ,Song1}) to obtain a process
 $
 h\in H_G^p(0,T)
 $
 such that
 $$
 B_\tau=\int_0^Th_sdB_s.
 $$
 This implies that $||I_{[0,\tau]}-h||_{H_G^p(0,T)}=0$, and thus $I_{[0,\tau]}\in H_G^p(0,T).$ In particular, $I_{[0,\tau]}\in H_G^2(0,T)=M_G^2(0,T)$. Combining this with the characterization theorem of $M_G^p(0,T)$ (see \cite{HWZ}), we obtain that $I_{[0,\tau]}\in M_G^p(0,T)$.

But we can still keep our original proof because it is more direct and constructive, and since it does not rely on the $G$-martingale representation theorem which is from the structure of the probability family for $G$-expectation, it is applicable to the more general case that the $G$-expectation probability family is replaced by an arbitrary given family of probability measures on $\Omega$.
\end{description}
}
\end{remark}

\begin{remark}
\upshape{Let $\tau\leq T$ be a stopping time and $\eta\in H_{G}^{p}(0,T)$. From \cite{LP}, we have \[
	\int_0^{\tau}\eta_sdB^i_s=\int_0^{T}\eta_sI_{[0,\tau]}(s)dB^i_s.
	\]
If $\tau$ is quasi-continuous, then by the above
Proposition \ref{Myth3.6} (see also Remark \ref{Rm4-11} (i)),  we derive that
$\eta I_{[0,\tau]}\in H_{G}^{p}(0,T)$. Such kind of conclusions may be
useful in the localization argument for the stochastic integrals.}
\end{remark}

\section{Examples and counterexamples}

We first present some examples of nonlinear semimartingales $Y$ satisfying the assumptions in  Theorem \ref{Myth3.5}

\begin{example}
\label{Myth5.1} \upshape{
		\begin{description}
			\item[\rm{ (i)}] Let $\mathcal{{P}}$ be
			the weakly compact family of probability measures corresponding to
			$G$-expectation, under which canonical process $B$ is a $G$-Brownian motion.
			Assume that $B$ satisfies $\frac{d\langle B\rangle_{t}}{dt}\geq
			\underline{\sigma}^{2}I_{k\times k}$ for some $\underline{\sigma}^{2}>0.$ Then
			$B$ is quasi-continuous and satisfies the assumption $(H^{\prime})$.
			
			More generally, let $Q$ be the open set we concern. We take $Y$ as the
			solution of a $d$-dimensional SDEs driven by $G$-Brownian motion $B$:
			\[
			X_{t}^{x}=x+\int_0^tb(s,X_{s}^{x})ds+\sum_{i,j=1}^{k}\int_0^th_{ij}(s,X_{s}^{x})d\langle
			B^{i},B^{j}\rangle_{s}+\sum_{j=1}^{k}\int_0^t\sigma_{j}(s,X_{s}^{x})dB_{s}^{j}%
			,\ \ \ t\geq0,\label{SDE}%
			\]
			where $x\in\mathbb{R}^{d}$, $b(t,x),h_{ij}(t,x),\sigma_{j}(t,x):[0,T]\times
			\mathbb{R}^{d}\rightarrow\mathbb{R}^{d}$ are deterministic functions
			continuous in $t$ and Lipschitz in $x,$ with coefficient $L.$ We also assume
			that $\sigma:=(\sigma_{1}\cdots,\sigma_{k})$ is non-degenerate, i.e., there
			exists a constant $\lambda>0$ such that
			\[
			\lambda I_{d\times d}\leq\sigma(t,y)\sigma(t,y)^{T},\ \ \ \text{for all}%
			\ y\in\overline{Q}.
			\]

			First from Proposition \ref{Myth4.5}, the process $X^{x}$ is
			quasi-continuous. Then we show that the assumption $(H)$
			hold. Indeed, we can fix any $R>0$ and define, for $\omega$ satisfying
			$\tau_{Q}(\omega)<\infty,$  the stopping times
			\[
			\sigma^{\omega}(\omega^{\prime})=\inf\{t\geq\tau_{Q}(\omega):X_{t}^{x}%
			(\omega^{\prime})\in(U(X_{\tau_{Q}(\omega)}^{x}(\omega),R))^{c}\},
			\]
			where $(U(X_{\tau_{Q}(\omega)}^{x}(\omega),R))$ is the open ball with center
			$X_{\tau_{Q}(\omega)}^{x}(\omega)$ and radius $R$. Fix any $\delta>0$, then for $\omega^{\prime
			}\in\Omega^{\omega},$  $\phi
			(\tau_{Q}(\omega)+t,X_{\tau_{Q}(\omega)+t}^{x}(\omega^{\prime}))$, for
			$\phi=b,h_{ij},\sigma_{j}$, are bounded for $t\in[0,\sigma^{\omega}(\omega^{\prime})\wedge\delta]$:
			\begin{align*}
			&  |\phi(\tau_{Q}(\omega)+t,X_{\tau_{Q}(\omega)+t}^{x}(\omega^{\prime}))|\\
			&  \leq|\phi(\tau_{Q}(\omega)+t,X_{\tau_{Q}(\omega)+t}^{x}(\omega^{\prime
			}))-\phi(\tau_{Q}(\omega)+t,0)|+|\phi(\tau_{Q}(\omega)+t,0)|\\
			&  \leq L|X_{\tau_{Q}(\omega)+t}^{x}(\omega^{\prime})|+|\phi(\tau_{Q}%
			(\omega)+t,0)|.
			\end{align*}
			Now by the non-degeneracy assumption on $B$ and $\sigma$, it is easy to check that the assumption
			$(H)$ is satisfied.
			
			It worthy pointing out that whereas $(H^{\prime})$ in Remark \ref{Myrem3.1} may not hold. This
			case is one of the main motivation for our general condition $(H)$. For the above
			$G$-SDE, if moreover $b,h_{ij},\sigma_{j}$ are bounded (globally on $Q$), then
			the stronger assumption $(H^{\prime})$  is also satisfied.
			
			\item[\rm{ (ii)}]  In the $G$-expectation space, we take a $d$-dimensional process
			$Y=M+A,$ where $M$ is a symmetric $G$-martingale and $A$ is a quasi-continuous finite variation process, such that $(H)$ or $(H')$ is satisfied $($In the one-dimensional case, this assumption can be
			weakened, see Remark \ref{Re3.1}$)$.
			
			\item[\rm{ (iii)}] Let $Y=B$ and $\mathcal{{P}}$ be a weakly compact family of
			probability measures such that under each $P\in \mathcal{{P}}$, $Y=M^{P}+A^{P}$
			is a semimartingale satisfying
			\[
			\lambda I_{k\times k}\leq \frac{d\langle M^{P}\rangle_{t}}{dt}\leq \Lambda I_{k\times
				k},\ |\frac{dA_{t}^{P}}{dt}|\leq C\ \ \ {on}\ \overline{Q},\ P\text{-a.s.},\text{
				for some constants }0<\lambda \leq \Lambda,C\geq0\text{,}			\]
			as a case considered in \cite{ETZ}. Then $(H')$ is satisfied and obviously $Y$ is quasi-continuous.
		\end{description}
	}
\end{example}

We then consider several counterexamples which showing that the exit times may
not possess the quasi-continuity if the condition $(H^{\prime})$ is violated. Here we  mainly confine
the discussions to the condition $(H^{\prime})$ for the sake of symbol simplicity, although the condition $(H)$
can also be checked.

The first example concerns
 on the case that the assumption
tr$[d\langle Y\rangle_{t}]>0$, for each $P$, in $(H^{\prime})$ does not hold.
\begin{example}
\label{Myth5.2} \upshape{
\begin{description}
\item[\rm{(i)}] Let $k=1$ and denote $\omega^{x}$ the path with constant value
$x\in \mathbb{R},$ i.e., $\ \omega_{t}^{x}\equiv x$ for each $t\geq0$. We consider the
family  $\mathcal{{P}=}\{P_{x}:x\in\lbrack-1,1]\}$\ of probability measures
such that
\[
P_{x}(\{\omega^{x}\})=1, \ \ \ \text{for each}\ x\in [-1,1].
\]
Take $Q=(-\infty,0)$ and $Y=B$. It is easy to see that $\mathcal{{P}}$ is
weakly compact and $\langle B\rangle^P_{t}\equiv0$ for each $P\in\mathcal{{P}}$.  Note that
\[
({\tau}_{{Q}}\wedge1)(\omega^{x})=0\text{ for }x\in\lbrack0,1],\ \ \text{\ and}\ \
\ ({\tau}_{{Q}}\wedge1)(\omega^{x})=1 \text{ for }x\in\lbrack-1,0).
\]
So $\omega^{0}$ is a discontinuity point of ${\tau}_{{Q}}\wedge1$. Assume
on the contrary that we can find a set $E$  such that
$c(E)\leq\frac{1}{2}$ and ${\tau}_{{Q}}\wedge1$ is continuous on $
E^{c}.$ For each $x\in[-1,1]$, since $c(\{\omega^{x}\})=1,$ so it must hold that $\omega^{x}\in
 E^{c}$. But this contradicts to the
assumption that ${\tau}_{{Q}}  \wedge1$ is continuous on $ E^{c}.$
Therefore, ${\tau}_{{Q}}  \wedge1$ is not quasi-continuous
\item[\rm{(ii)}] Let $k=1$ and $\mathcal{{P}}$ be a weakly compact family of
probability measures, under which $B$ is a one-dimensional $G$-Brownian motion
with $\Gamma=[0,\overline{\sigma}^{2}]$ for some $\overline{\sigma}^{2}>0$.
Assume that under $P_{\sigma}\in\mathcal{P}$, $B$ is a linear Brownian motion
such that$\ \langle B\rangle^{P_{\sigma}}_{t}=\sigma^{2}t,$ for each $\sigma\in
\lbrack0,\overline{\sigma}]$. Take $Q=(-\infty,0)$ and $Y=B$. In this
$G$-Brownian motion case, we need to consider another kind of neighborhood
for  $\omega^{0}$, where $\omega^{0}$ is defined as in (i). Let us denote
\[
A:=\{\omega\in\Omega:\omega_{0}=0,\ (\omega_{t})_{t\geq0}\text{ changes sign
infinitely many times in }[0,\varepsilon],\text{ for each }\varepsilon>0\}.
\]
Then
\[
({\tau}_{{\overline{Q}}}\wedge1)(\omega)=0\text{ for }\omega\in A,\ \ \text{\ and}\  \ \ ({\tau}_{{\overline{Q}}}\wedge1)(\omega^{0})=1.
\]
This means that ${\tau}_{\overline{Q}}\wedge1$ is not continuous at
$\omega^{0}.$
Now we show that ${\tau}_{\overline{Q}}\wedge1$ is not quasi-continuous.
Indeed, for any given $T>0$ and $\varepsilon>0,$ since $\frac{B_{t}}{\sigma}$ is a standard Brownian motion, then
\[
P_{\sigma}(\{\sup_{0\leq t\leq T}|B_{t}|\leq\varepsilon\})=P_{\sigma}
(\{\sup_{0\leq t\leq T}|\frac{B_{t}}{\sigma}|\leq\frac{\varepsilon}{\sigma
}\})\rightarrow 1,\   \ \ \text{as}\ \ 0<\sigma\downarrow0.
\]
Thus,
\[
P_{\sigma}(\{\omega\in\Omega:\rho(\omega,\omega^{0})\leq\varepsilon
\})\rightarrow 1,\   \ \ \text{as}\ \ 0<\sigma\downarrow0.
\]
Therefore, by the path property of linear Brownian motion $($see Problem
2.7.18 of \cite{KS}$)$,
\[
P_{\sigma}(\{\omega\in A:\rho(\omega,\omega^{0})\leq\varepsilon\})\rightarrow
1,\ \ \ \text{as}\ \ 0<\sigma\downarrow0.
\]
This implies
\begin{equation}
\label{Myeq5.5}c(A_{\varepsilon})=1,\text{ for each }\varepsilon>0,\ \text{
where } A_{\varepsilon}:=\{\omega\in A:\rho(\omega,\omega^{0})\leq
\varepsilon\}.
\end{equation}
Assume on the contrary that we can find a set $E$ such that $c(E)\leq\frac
{1}{2}$ and ${\tau}_{\overline{Q}}\wedge1$ is continuous on $
E^{c}.$ Since $P_0(\{\omega^0\})=1$, so $c(\{\omega^0\})=1$, and thus $\omega^{0}\in E^c$. Note that $\omega^{0}\in E^c$ is a limit point of $A\cap E^{c},$ since if
not, there exists some $\varepsilon>0$ such that $A_{\varepsilon}\subset E,$ which is impossible by
equality (\ref{Myeq5.5}). Thus we have reached a contradiction. So
${\tau}_{\overline{Q}} \wedge1$ is not quasi-continuous.
\end{description}
}
\end{example}

Now we give an example in which $d\langle Y\rangle_{t}\geq\varepsilon$
tr$[d\langle Y\rangle_{t}]I_{d \times d}$ for some $\varepsilon>0$, for each
$P$, in $(H^{\prime})$ is not met.

\begin{example}
	\upshape{
		Let $k=2$ and $\mathcal{{P}}$ be the weakly compact family of probability
		measures such that $B$ is a two-dimensional $G$-Brownian motion with
		\[
		\Gamma=\left\{  \left[
		\begin{tabular}
		[c]{ll}$\alpha$ & $0$\\
		$0$ & $1-\alpha$
		\end{tabular}
		\right]  :0\leq\alpha\leq1\right\} .
		\]
		Then tr$[\langle B\rangle^P_{t}]=t$, for each $P\in
		\mathcal{P}$.
		Assume that under $P_{\alpha}\in\mathcal{P}$, $B$ is a linear Brownian motion
		with $\langle B\rangle^{P_{\alpha}}_{t}=t\left[
		\begin{tabular}
		[c]{ll}$\alpha$ & $0$\\
		$0$ & $1-\alpha$\end{tabular}
		\right]  ,$ for each $0\leq\alpha\leq1$. Let us take $Q=(-\infty,\infty
		)\times(0,1)\ $and $Y=B$. We identify $\omega=(\omega^{1},\omega^{2})$, where $\omega^{j},j=1,2$ are the corresponding scalar components. In this example, we need to consider the following
		set of points of discontinuity:
		\[
		\Omega_{0}:=\{\omega=(\omega^{1},\omega^{2})\in\Omega:\omega_{t}^{2}\equiv0,\ t\geq0\}.
		\]
		We define
		\[
		A:=\{\omega\in\Omega:\omega_{0}^{2}=0,\ (\omega_{t}^{2})_{t\geq0}\text{
			changes sign infinitely many times in }[0,\varepsilon],\text{ for each
		}\varepsilon>0\}.
		\]
		Since $(\omega_{t}^{2})_{t\geq0}$ is a linear Brownian motion under
		$P_{\alpha}$, then
		 $P_\alpha(A)=1$ for $\alpha< 1$. It is easy to see that
		\[
		({\tau}_{{\overline{Q}}}\wedge1)(\omega)=0\text{ for }\omega\in A, \ \ \ \text{and}\ \
		\ ({\tau}_{{\overline{Q}}}\wedge1)(\omega)=1\text{ for }\omega\in\Omega_{0},
		\]
		which means that each $\omega\in\Omega_{0}$ is a discontinuity point of
		${\tau}_{{\overline{Q}}}\wedge1.$
		Assume that we can find a set $E$ such that  $c(E)\leq\frac{1}{2}$ and ${\tau
		}_{\overline{Q}}\wedge1$ is continuous on  $E^{c}.$ If
		$\omega\in\Omega_{0}$ is a limit point  $A\cap E^{c}$, since $E^c$ is closed,  we will have $\omega\in E^c$, which leads to the discontinuity of ${\tau
		}_{\overline{Q}}\wedge1$ on $E^c$. So any
	$\omega\in\Omega_{0}$ should not be a limit point  of $A\cap E^{c}$, and thus there
		exists an open set $O\subset\Omega$ such that  $O\supset\Omega_{0}$ and
		$O\cap (A\cap E^c)=\emptyset$.
		 Now we claim  that $c(O\cap A)=1$. Indeed, since $P_{\alpha}$ converges
		to $P_{1}$ weakly, as $\alpha\rightarrow1$, then
		\[
		\liminf_{1>\alpha\rightarrow1}P_{\alpha}(O\cap
		A)=\liminf_{1>\alpha\rightarrow1}P_{\alpha}(O)\geq P_{1}(O)=P_{1}(\Omega_{0})=1.
		\]
		This implies
		\[
		c(O\cap A)=1,
		\]
		which is a contradiction since $O\cap A\subset E$. Therefore, ${\tau}_{{Q}}\wedge1$ is not quasi-continuous.}
\end{example}
\bigskip

\noindent\textbf{Acknowledgement}:
The author would like to thank Shige Peng and
Yongsheng Song for their helpful discussions.
The author is also very grateful to the anonymous referees for their very careful reading and many
valuable suggestions.

\end{document}